\newtheorem*{rep@theorem}{\rep@title}
\newcommand{\newreptheorem}[2]{%
\newenvironment{rep#1}[1]{%
 \def\rep@title{#2 \ref{##1}}%
 \begin{rep@theorem}}%
 {\end{rep@theorem}}}
\newtheorem{intro_thm}{Theorem}
\newtheorem{intro_cor}[intro_thm]{Corollary}
\newtheorem{intro_defn}[intro_thm]{Definition}
\newtheorem{lemma}{Lemma}[section]
\newtheorem{thm}[lemma]{Theorem} 
\newtheorem{prop}[lemma]{Proposition}
\newtheorem{cor}[lemma]{Corollary}
\theoremstyle{definition}
\newtheorem{defn}[lemma]{Definition}
\newtheorem{es}[lemma]{Example}
\newtheorem{notation}[lemma]{Notation}
\theoremstyle{remark}
\newtheorem{oss}[lemma]{Remark}
\newtheoremstyle{TheoremNum}
        {0.2 cm}{0.2 cm}              %%% space between body and thm
        {\itshape}                      %%% Thm body font
        {}                              %%% Indent amount (empty = no indent)
        {}                     %%% Thm head font
        {.}                             %%% Punctuation after thm head
        { }                             %%% Space after thm head
        {\thmname{\bfseries #1}\thmnote{ \bfseries #3}}%%% Thm head spec
    \theoremstyle{TheoremNum}
\newtheorem{rec_thm}{Theorem}
\newtheorem{rec_cor}[rec_thm]{Corollary}
\newtheorem{rec_defn}[rec_thm]{Definition}
\newcommand{\id}{\mathrm{id}}
\newcommand\calQ{{\mathcal Q}}
\newcommand\calM{{\mathcal M}}
\newcommand\calN{{\mathcal N}}
\newcommand\calG{{\mathcal G}}
\newcommand\calE{{\mathcal E}}
\newcommand\calK{{\mathcal K}}
\newcommand\calF{{\mathcal F}}
\newcommand{\Hm}{\textup{H}}
\newcommand{\Hmb}{\textup{H}_{\textup{mb}}}
\newcommand{\Linf}{\mathrm{L}^{\infty}}
\newcommand{\Lone}{\mathrm{L}^{1}}
\newcommand{\Ima}{\text{Im}}
\newcommand{\Ker}{\text{Ker}}
\newcommand{\Id}{\text{Id}}
\newcommand{\Linfw}{\mathrm{L}_{\textup{w}^*}^{\infty}}
\begin{document}

\title[Bounded cohomology of measured groupoids]{Measurable bounded cohomology of $t$-discrete measured groupoids via resolutions}

\author[F. Sarti]{F. Sarti}
\address{Dipartimento di Matematica, Universit\'a di Pisa}
\email{filippo.sarti@unipi.it}

\author[A. Savini]{A. Savini}
\address{Dipartimento di Matematica, Universit\'a di Milano-Bicocca}
\email{alessio.savini@unimib.it}

\date{\today.\ \copyright{\ F. Sarti, A. Savini}}

\begin{abstract}
We define bounded cohomology of $t$-discrete measured groupoids with coefficients into measurable bundles of Banach spaces. Our approach via homological algebra extends the classic theory developed by Ivanov and by Monod. As a consequence, we show that the bounded cohomology of a $t$-discrete groupoid $\mathcal{G}$ can be computed using any amenable $\mathcal{G}$-space. In particular, we can compute bounded cohomology using strong boundaries.
\end{abstract}

\maketitle

\section{Introduction}

The \emph{continuous cohomology} $\mathrm{H}^\bullet_c(G)$ of a topological group $G$ with complex coefficients is defined by considering the cohomology of the complex of $G$-invariant $\mathbb{C}$-valued continuous functions on $G$.  There exist several and relevant cases in which the continuous cohomology is well understood. For instance, when $G$ is discrete, continuity is trivially satisfied and its cohomology boils down to the one of the associated classifying space $BG$ \cite[Chapter II.4]{brown}. When $G$ is a semisimple Lie group of non-compact type, we can recover its continuous cohomology either by looking at the $G$-invariant differential forms on the associated symmetric space \cite[Chapter IX]{Borel} or by considering the De Rham cohomology of the compact dual symmetric space \cite[Chapitre III.7]{guichardet}. Blanc \cite{Blanc} showed that if $G$ is $\sigma$-compact and it acts on a locally compact $\sigma$-compact space $X$ by fixing some positive measure on it, then the complex of locally $p$-integrable functions on $X$ still computes the continuous cohomology of $G$. More recently Austin and Moore \cite{AM} have proved that, if $G$ is locally compact and second countable, then its continuous cohomology is equivalent to its measurable variant. 

By focusing our attention on the subcomplex of \emph{bounded} continuous functions, we can construct the \emph{continuous bounded cohomology} $\mathrm{H}_{cb}^\bullet(G)$. The latter is a much more mysterious invariant and explicit computations are known only in few cases. For instance, when $G$ is \emph{amenable} its continuous bounded cohomology is trivial. For semisimple Lie groups, continuous bounded cohomology is well understood only in low degrees (see for instance Burger and Monod \cite{BM1}). 

The need to compute bounded cohomology using different resolutions pushed Ivanov \cite{Ivanov} to develop a new framework in the context of discrete groups which mimics the one already known for the usual cohomology. Indeed, a relevant part of his work was devoted to reformulate classic notions like \emph{injectivity} for modules and the definition of \emph{contracting homotopy} in the bounded setting. Thanks to this new perspective, he was able to show that the singular bounded cohomology of an aspherical space $X$ depends only on its fundamental group $\pi_1(X)$ \cite[Theorem 5.5]{miolibro}, in complete analogy to the unbounded case.

Almost twenty years later, Burger and Monod \cite{burger2:articolo} extended Ivanov's approach to continuous bounded cohomology of locally compact groups. One of the crucial aspects of their theory was the possibility to compute continuous bounded cohomology via the complex of essentially bounded measurable functions on any \emph{amenable} $G$-space \cite[Theorem 2]{burger2:articolo}. Relevant examples of amenable $G$-spaces are either the Furstenberg boundary when $G$ is a semisimple Lie group or the Poisson boundary with respect to an \'etale measure on $G$. 

% The latter allows to relate a purely algebraic object such as bounded cohomology with different areas of mathematics like boundary theory, rigidity and ergodic theory. 

Since groups are particular instances of \emph{groupoids} (\emph{i.e.} small categories in which every morphism is invertible), it is natural to look for any suitable cohomological theory for groupoids. When the groupoid is \emph{measured}, namely it admits a well-behaved measure under multiplication and inversion, Westman \cite{westman69} exploited the complex of measurable functions to extend measurable cohomology of groups to groupoids. Something similar was done by Feldman and Moore \cite{feldman:moore} in the case of countable equivalence relations and by Series for more general fibred coefficients \cite{Series}. 

With the goal of adding a missing piece to the puzzle described above and inspired by several rigidity results for measurable cocycles \cite{sarti:savini:3,sarti:savini:20:finite:reducibility,sarti:savini:2}, in a recent work \cite{sarti:savini:23} the authors settled the foundation of the theory of bounded cohomology for \emph{measured groupoids}. More precisely, given a measured groupoid $\mathcal{G}$, its bounded cohomology is defined by taking essentially bounded functions on the fibred space $\mathcal{G}^{(\bullet+1)}$ consisting of $(\bullet+1)$-tuples sharing the same target. 
The analogies with bounded cohomology of locally compact groups extends beyond the mere definition. Just to mention few of them, we recall the \emph{exponential isomorphism} \cite[Theorem 1]{sarti:savini:23} and the vanishing result for \emph{amenable groupoids} \cite[Theorem 4]{sarti:savini:23}. Despite the similarities, the absence of a topology and the groupoid structure introduced new difficulties with respect to the theory developed by Monod. For instance, in order to keep the parallelism with bounded cohomology of groups, the authors suitably modified the notion of \emph{coefficient module} \cite[Definition 1.2.1]{monod:libro} by replacing the continuity of the action with measurability \cite[Definition 3.3]{sarti:savini:23}. However, the fibred nature of groupoids and of the complexes defining their cohomology suggest that coefficients should have a compatible fibred structure. Precisely, the natural candidate to replace Banach spaces in our framework are \emph{measurable bundles of Banach spaces}. The goal of this paper is exactly to define bounded cohomology of measured groupoids with such coefficients. 

Roughly speaking, given a measured groupoid $\mathcal{G}$ over $X$, a $\mathcal{G}$-bundle of Banach spaces is the datum of a family of Banach spaces $(E_x)_{x\in X}$ organized in a measurable fashion and endowed with a $\mathcal{G}$-action by isometries (Definition \ref{definition_bundle}). 
A \emph{morphism} of bundles is a family of linear maps that preserves measurability (Definition \ref{definition_morphism_bundles}). Bundles and morphisms are the building blocks starting from which we construct our cohomological theory. 
To this end, we introduce \emph{relative injectivity} for bundles (Definition \ref{definition_relatively_injective}) and the notion of \emph{strong} resolutions (Definition \ref{def strong}).
Then we prove the Fundamental Lemma of Homological Algebra in this context (Lemma \ref{lemma_fundamental_lemma}).
As a consequence we deduce that any two strong resolutions by relatively injective bundles must share the same cohomology (Corollary \ref{corollary_unique_resolution}).

Once we have fixed all the necessary tools, we move on and we focus on the standard resolution of essentially bounded sections, namely the complex described in Example \ref{example_bundle_linf} and whose generic bundle is given by
\begin{equation*}
 \mathcal{L}(\mathcal{G}^{(\bullet+1)},\mathcal{E})\,,\;\;\;x\mapsto \Linfw((\mathcal{G}^{(\bullet+1)},\nu_x^{\bullet+1}),E_x)\,.
\end{equation*}
For $t$-discrete groupoids, the space of essentially bounded sections of the above bundle, together with the standard homogeneous coboundary operator, defines the bounded cohomology of $\mathcal{G}$. 
The reason why we restrict only to $t$-discrete groupoids is subtle and explained at the beginning of Section \ref{section:bounded:cohomology}.
\begin{intro_defn}\label{def bounded cohomology}
  The \emph{bounded cohomology} $\Hmb^{\bullet}(\mathcal{G},\mathcal{E})$ of a $t$-discrete measured groupoid $\mathcal{G}$ with coefficients in a dual measurable $\mathcal{G}$-bundle $\mathcal{E}$
 is the cohomology of the complex $(\Linf(X, \mathcal{L}(\mathcal{G}^{(\bullet+1)},\mathcal{E}))^{\mathcal{G}}, d^{\bullet})$, namely
$$\Hmb^{k}(\mathcal{G},\mathcal{E})\coloneqq \Hm^k(\Linf(X, \mathcal{L}(\mathcal{G}^{(\bullet+1)},\mathcal{E}))^{\mathcal{G}}, d^{\bullet})\,.$$
\end{intro_defn}

Given an \emph{amenable} $\mathcal{G}$-space $(S,\tau)$ and a $\mathcal{G}$-bundle $\mathcal{E}$, we consider an analogous complex with generic bundle
$$\mathcal{L}(S^{(\bullet+1)},\mathcal{E})\,,\;\;\;x\mapsto \Linfw((S^{(\bullet+1)},\tau_x^{\bullet+1}),E_x)\,,$$
and we show that the corresponding resolution satisfies the hypothesis of the Fundamental Lemma of Homological Algebra. Precisely, we prove the following

  \begin{intro_thm}\label{thm:amenable}
    Let $\calG$ be a $t$-discrete measured groupoid, $(S,\tau)$ an amenable $\mathcal{G}$-space and $\mathcal{E}$ a measurable $\calG$-bundle that is the dual of a separable measurable $\calG$-bundle.
    Then we have a natural isomorphism
    $$\Hm^{k}(\Linf(X, \mathcal{L}(S^{(\bullet+1)},\mathcal{E}))^{\mathcal{G}})\cong \Hmb^{k}(\calG,\mathcal{E})\,$$
    for every $k\geq 0$.
    \end{intro_thm}

To sum up, the bounded cohomology of a $t$-discrete measured groupoid $\calG$ can be computed via the resolutions of essentially bounded sections on any amenable $\calG$-space. A relevant example of such space is given by a $\calG$-boundary \cite{sarti:savini:24}. As a consequence any $\calG$-boundary can be used to compute the bounded cohomology of $\mathcal{G}$ (Corollary \ref{corollary:boundaries}). This extends known results proved  by Ivanov for discrete groups and by Monod in the continuous setting. 

As a consequence of Theorem \ref{thm:amenable} we get a new proof of the following result, which first appeared in \cite{sarti:savini:23}.

\begin{intro_cor}\label{cor vanishing amenable}
Let $\mathcal{G}$ a $t$-discrete amenable measured groupoid and $\mathcal{E}$ a measurable $\calG$-bundle that is the dual of a separable measurable $\calG$-bundle. Then we have that
$$
\Hmb^{k}(\calG,\mathcal{E}) \cong 0,
$$
for $k \geq 1$. 
\end{intro_cor}

    \vspace{5pt} 
    \paragraph{\textbf{Structure of the paper.}}
    The paper is divided in three sections. In Section \ref{section:preliminarie} we introduce some background material, precisely we recall the basics about groupoids (Section \ref{section:groupoids}) and about amenability (Section \ref{section:amenability}). 
    Then, in Section \ref{section measurable bundles}, we focus on measurable bundles, providing a brief overview of the theory (Section \ref{section definition examples}), then proving a disintegration isomorphism for integrable and essentially bounded sections (Section \ref{section disintegration}) and finally introducing the necessary homological tools for our theory (Section \ref{section:homological}). In Section \ref{section:bounded:cohomology}, we define bounded cohomology, we prove all the results stated in the introduction and we discuss some consequences. 
    \vspace{5pt}

    \paragraph{\textbf{Acknowledgements.}} The authors were partially supported by INdAM through GNSAGA.
    The first author's research is funded by MUR through the PRIN project ``Geometry and topology of manifolds". 

\section{Preliminaries}\label{section:preliminarie}
We start with a list of some tools and notions that we are going to exploit along the paper. We warn the reader that we will not be exhaustive and we will sometimes avoid technicalities to make the exposition clearer. For any detail we refer to the book by Anantharaman-Delaroche and Renault \cite{delaroche:renault}. Here the notation is the same adopted by the authors in our previous papers \cite{sarti:savini:23,sarti:savini:24}. 

\subsection{Measured groupoids}\label{section:groupoids}

A \emph{groupoid} is a small category whose morphisms are invertible. We denote the space of objects (the \emph{unit space}) as $X$ and the space of morphisms (the \emph{groupoid}) as $\mathcal{G}$. With $t: \mathcal{G}\to X$ and $s: \mathcal{G}\to X$ we refer to the \emph{target} map and the \emph{source} map, respectively. 
For every $x\in X$, we write $\mathcal{G}^x=t^{-1}(x)$ and 
$\mathcal{G}_x=s^{-1}(x)$.

Given such a groupoid, a \emph{left $\mathcal{G}$-space} is a set $S$ endowed with a map $t_S:S\to X$ satisfying the following conditions 
 \begin{itemize}
 \item[(i)] $(gh)s=g(hs)$, whenever $(gh,s)\in \mathcal{G}* S$ and $(g,hs) \in \mathcal{G} *S $;
 \item[(ii)] $t_S(gs)=t(g)$ whenever $(g,s) \in \mathcal{G}* S$;
 \item[(iii)] $gg^{-1}s =g^{-1} gs=s$, whenever $(g,s)\in \mathcal{G}* S$.
 \end{itemize}
Here $\mathcal{G} \ast S$ denotes the fibred product $\mathcal{G} \ast S=\{ (g,s) \in \mathcal{G} \times S \ | \ s(g)=t_S(s) \}$. For a left $\mathcal{G}$-space, following Anatharaman-Delaroche and Renault \cite[Chapter 2.a]{delaroche:renault}, we endow the fibred product $S \ast \mathcal{G}=\{ (s,g) \in S \times \mathcal{G} \ | \ t_S(s)=t(g) \}$, with the groupoid structure having units $S$, target $t(s,g)=s$, source $s(g,s)=g^{-1}s$ and inverse $(s,g)^{-1}=(g^{-1}s,g^{-1})$. We call $S \ast \mathcal{G}$ \emph{semidirect groupoid} and we denote it by $S \rtimes \mathcal{G}$. 

A \emph{Borel groupoid} is a groupoid endowed with a $\sigma$-algebra such that the composition and the inverse map are measurable. 
Since we are interested in groupoids equipped with a measure, from now on we assume that the unit space $X$ is a standard Borel space and $\mu$ is a probability measure on it. 

A \emph{Borel Haar system} of measure on $\mathcal{G}$ is a family $\rho=\{\rho^x \}_{x \in X}$ of $\sigma$-finite measures with $\rho^x(\mathcal{G}\setminus \mathcal{G}^x)=0$ for every $x$, such that the map 
$$x\mapsto \rho^x(f):=\int_{\mathcal{G}} f(g)d\rho^x(g)$$ is measurable whenever $f: \mathcal{G}\to \mathbb{R}$ is so and it holds that
\begin{equation}\label{equation:Haar}
  \int_{\calG} f(g h)d \rho^{s(g)} (h)=\int_{\calG} f(h)d \rho^{t(g)}(h)\,
\end{equation}
for every $g\in \mathcal{G}$.

The composition of a Haar system $\rho$ with the measure $\mu$ gives back a measure on $\mathcal{G}$ defined by
$$\rho \circ \mu (f)\coloneqq \int_X \rho^x (f) d\mu(x)\,.$$
If the measure class of $\rho \circ \mu$ is unchanged by the inverse map, we say that $\rho \circ \mu$ is \emph{quasi-invariant}. 

\begin{defn}
A \emph{measured groupoid} is a Borel groupoid $\mathcal{G}$ endowed with a quasi-invariant measure of the form $\rho \circ \mu$, where $\rho$ is a Haar system and $\mu$ a probability measure on $X$. 
\end{defn}

It is often useful to replace $\rho\circ \mu$ with an equivalent \emph{probability} measure. As shown by Renault \cite{renault80}, one can find such a measure $\nu\sim \rho\circ \mu$ that admits a disintegration $$\nu=\int_X\nu^x d\mu(x)\,.$$ 
In this context we will have that 
\begin{equation}\label{quasi invariant system}
g_\ast\nu^{s(g)} \sim \nu^{t(g)},
\end{equation}
namely they are not the same measure, but they share the same measure class. From now on, we will consider a measured groupoid equipped with a quasi-invariant probability measure on it.

%\begin{oss}
%  Given a measure $\nu=\int_X \nu^x d\mu(x)$ as above, one can find a similar disintegration with respect to the source map. The latter will be denoted by $\nu_x$, that is 
%  $$\nu=\int_X\nu_xd\mu(x)\,,$$
%  where $\nu_x(\mathcal{G}\setminus \mathcal{G}_x)=0$ for every $x \in X$. 
%\end{oss}

When $\mathcal{G}^x$ is countable and $\rho^x$ is precisely the counting measure for every $x \in X$, we say that $\mathcal{G}$ is a \emph{$t$-discrete groupoid}.

Given a measured groupoid $(\mathcal{G},\nu)$, any left $\mathcal{G}$-space $S$ is assumed to be a Borel space endowed with a measure $\tau$ that disintegrates with respect to $t_S$, namely
$$
\tau=\int_X \tau^x d\mu(x).
$$
Additionally, for the family of measures $\{ \tau^x \}_{x \in X}$, we require that 
$$
g_\ast \tau^{s(g)} \sim \tau^{t(g)},
$$ 
namely it holds something similar to Equation \eqref{quasi invariant system}. With the above assumptions, $S \rtimes \mathcal{G}$ has a natural structure of measured groupoid over $S$, whose measure is given by $\nu \circ \tau$. Here the system $\{ \nu^x \}_{x \in X}$ is extended to a system parametrized by $S$ with the help of the map $t_S$, that is $\nu^s:=\nu^{t_S(s)}$.

\subsection{Amenability}\label{section:amenability}
Amenability of measured groupoids has several equivalent definitions, for instance via a fixed point property or through the notion of means. In what follows we are going to give a \emph{fibred version} of classic amenability of groups passing through the notion of invariant measurable systems of means introduced by Anantharaman-Delaroche and Renault \cite{delaroche:renault}. 

  Let $(\calG,\nu)$ be a measured groupoid and let $\mu$ be the probability measure on the unit space $X$. An \emph{invariant measurable system of means} is a family $\{\mathfrak{m}^x\,|\, x\in \calG^{(0)}\}$ of linear functions $\mathfrak{m}^x: \Linf(\mathcal{G},\nu^x)\rightarrow \mathbb{R}$ of norm one such that 
  the map $x\mapsto \mathfrak{m}^x(\lambda)$ is Borel for every $\lambda\in \Linf(\mathcal{G})$
  and for $\nu$-almost every $g\in \calG$ one has
  \begin{equation}
  g\mathfrak{m}^{s(g)}=\mathrm{m}^{t(g)},
  \end{equation}
  which means
  \begin{equation}\label{equation_invariance_mean}
    \mathfrak{m}^{t(g)} (\lambda^{t(g)})=\mathfrak{m}^{s(g)}(g^{-1}\lambda^{t(g)})\,.
  \end{equation}
In the above equation we exploited the disintegration isomorphism \cite[Equation (12)]{sarti:savini:23} which allows us to write an element $\lambda \in \mathrm{L}^\infty(\mathcal{G})$ as an essentially bounded section $\{\lambda^x\}_{x \in X}$ of a suitable measurable bundle, namely $\lambda^x \in \mathrm{L}^\infty(\mathcal{G},\nu^x)$. With this notation, we have that  
$$
(g^{-1}\lambda^{t(g)})(h)=\lambda^{t(g)}(gh),
$$
for every $h \in \mathcal{G}^{s(g)}$. We refer the reader to the next section for more details about measurable bundles and about the disintegration isomorphism. 

  \begin{defn}\label{definition_amenable_groupoid}
  A measured groupoid $\calG$ is \emph{amenable} if it admits an invariant measurable system of means.

A $\mathcal{G}$-space $(S,\tau)$ is \emph{amenable} if the measured groupoid $S \rtimes \mathcal{G}$ is amenable. 
\end{defn}
  
Given two left $\mathcal{G}$-spaces $(S,\tau)$ and $(T,\theta)$, we can consider the fibred product $S \ast T$ with respect to the maps $t_S:S \rightarrow X$ and $t_T:T \rightarrow X$. The latter is stills a left $\mathcal{G}$-space with the standard diagonal action. Additionally we can endow $S \ast T$ with a quasi-invariant probability by defining the fibred product measure
$$
\tau \ast \theta:=\int_X \tau^x \otimes \theta^x d\mu(x). 
$$
In this way $(S\ast T) \rtimes \mathcal{G}$ becomes naturally a measured groupoid. 

\begin{lemma}\label{lemma:rel:inj1}
  Let $(S,\tau)$ and $(T,\theta)$ two left $\mathcal{G}$-spaces. If $S$ is $\mathcal{G}$-amenable, then also the fibred product $(S*T,\tau\ast \theta)$ is $\mathcal{G}$-amenable.
\end{lemma}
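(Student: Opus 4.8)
The plan is to construct an invariant measurable system of means for the semidirect groupoid $(S*T)\rtimes\mathcal{G}$ directly out of one for $S\rtimes\mathcal{G}$, by transporting it along the forgetful projection. The key structural observation is that the map
\[
p\colon (S*T)\rtimes\mathcal{G}\longrightarrow S\rtimes\mathcal{G},\qquad ((s,t),g)\mapsto (s,g),
\]
is a morphism of measured groupoids covering the projection $\pi\colon S*T\to S$, $(s,t)\mapsto s$, on unit spaces: this is checked directly from the formulas for target, source, inverse and composition in a semidirect groupoid (for instance $p$ respects source because the source of $((s,t),g)$ is $(g^{-1}s,g^{-1}t)$, which $p$ sends to $(g^{-1}s,g^{-1})$, the source of $(s,g)$). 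Moreover $p$ restricts, for each unit $(s,t)$, to a bijection of the target-fibre $((S*T)\rtimes\mathcal{G})^{(s,t)}$ onto $(S\rtimes\mathcal{G})^{s}$; both are canonically identified with $\mathcal{G}^{t_S(s)}$ carrying the measure $\nu^{t_S(s)}$, since in either case the fibre system is the one obtained by extending $\{\nu^x\}$ along the respective target map. Under this identification the relevant spaces $\Linfw$ on corresponding fibres literally coincide.

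Given this, I would define the candidate means by pullback: letting $\{\mathfrak{m}^{s}\}_{s\in S}$ be an invariant measurable system of means for $S\rtimes\mathcal{G}$ (which exists by the $\mathcal{G}$-amenability of $S$), set $\mathfrak{n}^{(s,t)}\coloneqq \mathfrak{m}^{s}$ under the fibre identification above. Norm one is immediate, and the invariance is formal: an element $((s,t),g)$ acts on fibre functions by $h\mapsto gh$, exactly as the element $(s,g)$ acts on the corresponding fibre of $S\rtimes\mathcal{G}$, so Equation \eqref{equation_invariance_mean} written for $\mathfrak{n}$ over $((s,t),g)$ reduces verbatim to the invariance of $\mathfrak{m}$ over $(s,g)$.

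What remains are two measure-theoretic points, and this is where I expect the real work to sit. The first is measurability of $(s,t)\mapsto \mathfrak{n}^{(s,t)}(\Lambda^{(s,t)})$ for an arbitrary $\Lambda\in\Linf((S*T)\rtimes\mathcal{G})$: unlike the base case, the fibre restriction $\Lambda^{(s,t)}(g)=\Lambda((s,t),g)$ depends genuinely on $t$. Since $X$, $S$, $T$ are standard Borel, the Borel structure of the fibred product is generated by products, so by a monotone class argument it suffices to treat $\Lambda((s,t),g)=\phi(t)\,\psi(s,g)$, for which $\mathfrak{n}^{(s,t)}(\Lambda^{(s,t)})=\phi(t)\,\mathfrak{m}^{s}(\psi^{s})$ is manifestly measurable by linearity and the base assumption. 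The second, and most delicate, point is that the ``$\nu$-almost every'' clause survives the pullback: the invariance of $\mathfrak{m}$ holds off a null set $N\subseteq S\rtimes\mathcal{G}$ for $\nu\circ\tau=\int_S\nu^{t_S(s)}\,d\tau(s)$, and I must show $p^{-1}(N)$ is null for $\nu\circ(\tau*\theta)$. Using the disintegration $\tau*\theta=\int_X\tau^x\otimes\theta^x\,d\mu$ and that $\theta^x$ is $\mu$-a.e.\ a probability measure, integrating the indicator of $p^{-1}(N)$ over the $T$-variable contributes a factor $\theta^x(T)=1$, yielding $p_*(\nu\circ(\tau*\theta))=\nu\circ\tau$; in particular $p^{-1}(N)$ has the same (zero) mass as $N$. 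With both points settled, $\{\mathfrak{n}^{(s,t)}\}$ is an invariant measurable system of means, so $(S*T)\rtimes\mathcal{G}$ is amenable and $(S*T,\tau*\theta)$ is $\mathcal{G}$-amenable.
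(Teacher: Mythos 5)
Your proposal is correct and follows essentially the same route as the paper: both define the mean at $(s,t)$ to be $\mathfrak{m}^{s}$, i.e.\ pull the invariant system back along the projection $S*T\to S$, and then verify norm, measurability and $\mathcal{G}$-equivariance. You are in fact somewhat more careful than the paper's own proof, which only checks measurability of $(s,t)\mapsto\mathfrak{m}^{(s,t)}(\varphi)$ for a fixed fibre function $\varphi$ and does not comment on the survival of the almost-everywhere invariance under the pullback, whereas you address both points (via the monotone class argument for product test functions and the pushforward identity $p_*(\nu\circ(\tau*\theta))=\nu\circ\tau$); the only blemish is a notational slip in your parenthetical source computation, where $\pi$ sends the unit $(g^{-1}s,g^{-1}t)$ to the unit $g^{-1}s$, not to the morphism $(g^{-1}s,g^{-1})$.
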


\begin{proof}
  In order to prove the claim we need to exhibit an equivariant family of means $\{\mathfrak{m}^{(s,t)}\}_{(s,t)\in S*T}$. If we set $x=t_S(s)$, we define 
  $$\mathfrak{m}^{(s,t)}: \Linf(\mathcal{G},\nu^x)\to \mathbb{R}\,,\;\;\; \mathfrak{m}^{(s,t)}(\varphi)\coloneqq \mathfrak{m}^{s}(\varphi)\,.$$
  The measurability of the map
  $$(s,t)\mapsto \mathfrak{m}^{(s,t)}(\varphi)$$ follows by the fact that $s\mapsto\mathfrak{m}^{s}(\varphi)$ is measurable, by assumption. 
  Moreover, the assignment $(s,t)\mapsto \mathfrak{m}^{(s,t)}$ is $\mathcal{G}$-equivariant, indeed
  $$g \mathfrak{m}^{(g^{-1}s,g^{-1}t)}=g \mathfrak{m}^{g^{-1}s}= \mathfrak{m}^{s}= \mathfrak{m}^{(s,t)}\,.$$
This concludes the proof. 
\end{proof}

\section{Measurable Bundles of Banach spaces}\label{section measurable bundles}

\subsection{Definition and examples}\label{section definition examples}

In this section we introduce \emph{measurable bundles of Banach spaces}, which are the main object of our theory. Suitable references for this topic are the book by Fell and Doran \cite{fell:doran} and the one by Anantharaman-Delaroche and Renault \cite{delaroche:renault}. 

\begin{defn}\label{definition_bundle}
A \emph{measurable bundle of Banach spaces} (or simply a \emph{measurable bundle}) over a standard Borel probability space $(X,\mu)$ is a family of Banach spaces $\calE=(E_x)_{x\in X}$ endowed with a \emph{measurable structure}, that is a collection $\calM$ of vector fields $\sigma: x\mapsto \sigma(x)\in E_x$ such that 
\begin{itemize}
 \item[(1)] $\sigma_1+\sigma_2\in \calM$ whenever $\sigma_1,\sigma_2 \in \calM$;
 \item[(2)] $\varphi \cdot \sigma \in \calM$ whenever $\sigma\in \calM$ and $\varphi$ is $\mu$-measurable on $X$;
 \item[(3)] if $\sigma\in \calM$ then $x\mapsto \lVert \sigma(x) \rVert_{E_x}$ is $\mu$-measurable;
 \item[(4)] if $(\sigma_n)$ is a net in $\calM$ and $\sigma_n(x)\rightarrow \sigma(x)$ for $\mu$-almost every $x\in X$, then $\sigma \in \calM$.
 \end{itemize} 
An element $\sigma\in \calM$ is called \emph{measurable section} of $\calE$.
  
 A measurable bundle of Banach spaces is \emph{separable} if there exists a countable family $\{\sigma_n\}_{n \in \mathbb{N}}$ of sections such that $(\sigma_n(x))_{n\in \mathbb{N}}$ is dense in $E_x$, for almost every $x\in X$. 

\end{defn}

In the theory of measurable bundles it is often useful to decide whether a family of vector fields determines a measurable structure for the bundle. Given a linear space of vector fields $\calQ$ such that $x\mapsto \lVert \sigma(x) \rVert_{E_x}$ is $\mu$-measurable for every $\sigma\in \calQ$, we can construct the \emph{measurable structure generated by} $\calQ$. This is the smallest measurable structure containing $\calQ$ and it is constructed as follows: we first take the set 
$$\mathcal{R}\coloneqq\{\varphi \cdot \sigma\,|\, \varphi:X\rightarrow \mathbb{C} \; \text{simple function}\,,\, \sigma\in \mathcal{Q}\}$$
and then we consider all the vector fields that can be realized as limits almost everywhere of elements in $\mathcal{R}$.
%In particular, for every $\sigma\in \calM$ and every Borel subset $U\subset X$, there exists a sequence of simple functions $(\varphi_n)_{n \in \mathbb{N}}$ and of section $(\sigma_n)_{n \in \mathbb{N}}$ such that the sequence $(\varphi_n\sigma_n)_{n \in \mathbb{N}}$ converges to $\sigma$ almost everywhere on $U$ \cite[Lemma 4.3]{fell:doran}. 

\begin{oss} \label{remark_bundles_dlp}
Separable measurable bundles of Banach spaces are a particular instance of measurable fields of metric spaces studied by Arino, Delode and Penot \cite{Arino} and by Anderegg and Henry \cite{anderegg:henry:14}. 
%Actions of groups on such objects and their fixed point property plays an important role in Measured Group Theory because of their relation with \emph{amenability} first studied by Zimmer \cite{zimmer:libro}.
%
%Measurable fields have been exploited for instance to prove generalization of Adam--Ballmann fixed point theorem for non-elementary actions of amenable groups on negatively curved metric spaces \cite{adam:ballmann:88}.
%This approach was first adopted in \cite{anderegg:henry:14} and then refined in \cite{duchesne:13,sarti:savini:20:finite:reducibility} to show the existence of boundary maps for amenable isometric actions of countable groups on negatively curved space. 
%
%As we will discuss in Section \ref{section_amenable_groupoids}, amenability for groupoids were described by Delaroche and Renault \cite{delaroche:renault} in terms of fixed point properties for actions on Banach bundles, in the same fashion of Zimmer's characterization. 
%Since one of the aim of this paper is to characterize amenable groupoids in terms of their bounded cohomology, somehow extending vanishing results for bounded cohomology of amenable groups, it turns out that measurable bundles are the suitable category of objects to consider in this context. 
Given a measurable field $E$ of metric spaces over $X$, Duchesne, Lecureux and Pozzetti \cite[Lemma 4.12]{duchesne:lecureux:pozzetti:18} proved that $X$ contains a full-measure subset $X_0$ such that the set
$$E=\bigsqcup\limits_{x\in X_0} E_x $$
admits a standard Borel structure turning the projection 
$E\rightarrow X_0$ into a Borel map. Additionally, the set of measurable sections of $X_0\rightarrow E$ corresponds to the sections in $\calM$ (see also \cite[Proposition 1.9]{Arino}). 
\end{oss}

We describe below some examples that we are going to use in the rest of the paper. In order to simplify the exposition, we will leave some technical but elementary details to the reader.

\begin{es}\label{es subbundle}
Let $\mathcal{E}$ be a measurable bundle of Banach spaces over a standard Borel probability space $(X,\mu)$. Let $\mathcal{M}$ be the measurable structure on $\mathcal{E}$. Given a family $\mathcal{K}=(K_x)_{x \in X}$, where $K_x < E_x$ is a Banach subspace of $E_x$, we can consider the restriction of $\mathcal{M}$ to $\mathcal{K}$, namely
$$
\mathcal{M}|_{\mathcal{K}}:=\{ \sigma \in \mathcal{M} \ | \ \sigma(x) \in K_x, \ \forall x \in X \}. 
$$
In this way we obtain a new measurable bundle $(\mathcal{K},\mathcal{M}|_{\mathcal{K}})$ of Banach spaces over $X$ which is called \emph{subbundle} of $(\mathcal{E},\mathcal{M})$. 
\end{es}

\begin{es}\label{example_bundle_pull_back}
Given a Borel map $\pi:Y\rightarrow X$ between standard Borel probability spaces $(Y,\nu)$ and $(X,\mu)$, a measurable bundle of Banach spaces $\calE=(E_x)_{x\in X}$ over $X$ gives rise to a measurable bundle $\mathcal{F}=\pi^*\calE$ over $Y$ called \emph{pullback} of $\calE$ via $\pi$. Precisely, we set 
$$F_y=(\pi^\ast E)_y:=E_{\pi(y)}$$ for every $y\in Y$ and we consider the measurable structure $\pi^*\calM$ generated by the set  
$$\calN\coloneqq \{ \sigma\circ \pi \,,\, \sigma\in \calM\},$$ where
$\calM$ is a measurable structure for $\calE$.

Notice that if $\mathcal{E}$ is separable then $\mathcal{F}=\pi^*\mathcal{E}$ is separable too: a countable dense family of sections for the latter can be constructed starting from the one for $\mathcal{E}$ and precomposing each section with $\pi$.
\end{es}

\begin{es}\label{example_dual_bundle}
Let $\mathcal{E}=(E_x)$ be a separable measurable bundle over $(X,\mu)$ and consider the dual bundle $\mathcal{E}^*=(E_x^*)$.  Given $\mathcal{M}$ a measurable structure for $\mathcal{E}$, we can consider the family
$$\mathcal{M}^*\coloneqq \{ x\mapsto \eta(x)\in E_x^*\,|\, x\mapsto \langle  \eta(x),\sigma(x)\rangle \text{ is } \mu-\text{measurable } \; \forall \;\sigma\in \mathcal{M}  \}\,.$$
By \cite[Lemma A.3.7]{delaroche:renault}, the separability of $\mathcal{E}$ implies that $\mathcal{M}^*$ satisfies conditions (i)-(iv) of Definition \ref{definition_bundle}. Thus $\mathcal{E}^*$ is a measurable bundle of Banach spaces, which is not separable in general.
\end{es}

The presence of a norm on each fiber would suggest to consider the analogous of \emph{$\text{L}^p$-spaces}. This can be done by taking measurable sections whose norm is $p$-integrable as a function on $(X,\mu)$. We will be particularly interested in the cases when $p=1,\infty$.
\begin{defn}
Let $\mathcal{E}$ be a measurable bundle over a standard Borel probability space $(X,\mu)$ with measurable structure $\mathcal{M}$. Given a measurable section $\sigma \in \mathcal{M}$, we define
$$
\lVert \sigma \rVert: X \rightarrow [0,\infty), \ \ \ \lVert \sigma \rVert(x):=\lVert \sigma(x) \rVert_{E_x}. 
$$
The space of \emph{integrable sections} is 
$$\Lone(X,\mathcal{E})\coloneqq \{\sigma\in \mathcal{M}\,|\, \mu(\lVert \sigma \rVert)<+\infty \}/_{\sim_{\mu}}\,,$$
where we identify sections that coincide $\mu$-almost everywhere. This space is naturally normed by
$$
\lVert [ \sigma ] \rVert_{\Lone(X,\mathcal{E})}:=\mu( \lVert \sigma \rVert ). 
$$
The space of \emph{essentially bounded sections} is given by
$$\Linf(X,\mathcal{E})\coloneqq \{\sigma\in \mathcal{M}\,|\,  \lVert \sigma \rVert \in\Linf(X) \}/_{\sim_{\mu}}\,,$$
where again we identify sections that coincide $\mu$-almost everywhere. Also in this case, we have a natural norm defined by
$$
\lVert [ \sigma ] \rVert_{\Linf(X,\mathcal{E})}:=\lVert \sigma \rVert_{\Linf(X)}.
$$
\end{defn}

With an abuse of notation, we will refer to elements of either $\Lone(X,\mathcal{E})$ or $\Linf(X,\mathcal{E})$ by dropping the parenthesis of the equivalence class and considering a generic representative. 

We conclude the section with the following duality isomorphism.

\begin{prop}\cite[Proposition A.3.9]{delaroche:renault}\label{prop duality isomorphism delaroche}
Let $\mathcal{E}$ be a separable measurable bundle of Banach spaces over a standard Borel probability space $(X,\mu)$. Then we have a canonical isomorphism
$$
\mathrm{L}^\infty(X,\mathcal{E}^\ast) \cong \mathrm{L}^1(X,\mathcal{E})^\ast
$$
\end{prop}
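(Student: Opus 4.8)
The plan is to exhibit the natural pairing explicitly and show it realizes the isomorphism directly, rather than deducing it from the scalar case. I would define
$$\Phi\colon \mathrm{L}^\infty(X,\mathcal{E}^*)\longrightarrow \mathrm{L}^1(X,\mathcal{E})^*,\qquad \Phi(\eta)(\sigma)=\int_X\langle \eta(x),\sigma(x)\rangle\,d\mu(x),$$
and first check that it is well defined. For $\eta\in\mathcal{M}^*$ and $\sigma\in\mathcal{M}$ the integrand $x\mapsto\langle\eta(x),\sigma(x)\rangle$ is $\mu$-measurable by the very definition of $\mathcal{M}^*$ in Example \ref{example_dual_bundle}, and the pointwise bound $|\langle\eta(x),\sigma(x)\rangle|\le\|\eta(x)\|\,\|\sigma(x)\|\le\|\eta\|_\infty\,\|\sigma(x)\|$ together with $\|\sigma\|\in\mathrm{L}^1(X)$ shows both integrability and $\|\Phi(\eta)\|\le\|\eta\|_\infty$. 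Thus $\Phi$ is a norm-decreasing linear map, and it remains to prove that it is an isometric bijection.

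The reverse norm inequality and injectivity are where separability enters. Fix a countable family $\{\sigma_n\}$ of sections with $(\sigma_n(x))_n$ dense in $E_x$ for a.e.\ $x$. For the isometry, given $\varepsilon>0$ I would locate the positive-measure set $B=\{x:\|\eta(x)\|>\|\eta\|_\infty-\varepsilon\}$ and, using a measurable selection among the $\sigma_n$, build a section $\sigma$ of $\mathrm{L}^1$-norm one on which $\langle\eta(x),\sigma(x)\rangle$ is nearly $\|\eta(x)\|$; this yields $\|\Phi(\eta)\|\ge\|\eta\|_\infty-\varepsilon$, hence equality. Injectivity then follows directly: if $\Phi(\eta)=0$ then $\int_A\langle\eta(x),\sigma_n(x)\rangle\,d\mu=0$ for every $n$ and every measurable $A$, so $\langle\eta(x),\sigma_n(x)\rangle=0$ off a null set $N_n$; discarding the countable union $N=\bigcup_nN_n$ and invoking density of $(\sigma_n(x))_n$ forces $\eta(x)=0$ a.e.

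The substantial step is surjectivity, which I would treat as a fibrewise Radon–Nikodym construction. Given $L\in\mathrm{L}^1(X,\mathcal{E})^*$, after replacing each $\sigma_n$ by its truncations $\sigma_n\mathbf{1}_{\{\|\sigma_n\|\le k\}}$ so that it is integrable, the scalar set function $A\mapsto L(\mathbf{1}_A\sigma_n)$ is a finite signed measure absolutely continuous with respect to $\mu$, with density $g_n\in\mathrm{L}^1(X)$ satisfying $|g_n(x)|\le\|L\|\,\|\sigma_n(x)\|$ a.e. Enlarging $\{\sigma_n\}$ to be closed under rational-linear combinations, linearity of $L$ gives $g_{a\sigma_m+b\sigma_n}=ag_m+bg_n$ a.e., so on a common conull set the assignment $\sigma_n(x)\mapsto g_n(x)$ is a $\mathbb{Q}$-linear functional on the dense set $\{\sigma_n(x)\}$, bounded by $\|L\|$. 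Extending by continuity produces $\eta(x)\in E_x^*$ with $\|\eta(x)\|\le\|L\|$; weak-$*$ measurability, i.e.\ $\eta\in\mathcal{M}^*$, is exactly the measurability of $x\mapsto\langle\eta(x),\sigma_n(x)\rangle=g_n(x)$, and by construction $\Phi(\eta)$ agrees with $L$ on the family $\{\mathbf{1}_A\sigma_n\}$, whose span is dense in $\mathrm{L}^1(X,\mathcal{E})$, hence $\Phi(\eta)=L$.

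I expect the main obstacle to be the coherent gluing in the surjectivity step: turning the countably many a.e.-defined densities $g_n$ into a single genuinely measurable dual section $\eta$, which requires discarding one null set so as to handle simultaneously all rational-linear relations and the uniform bound, and then passing from the dense skeleton $\{\sigma_n(x)\}$ to all of $E_x$. This is precisely where separability is indispensable, since it is also what guarantees that $\mathcal{M}^*$ is a measurable structure at all (Example \ref{example_dual_bundle}); for a self-contained argument one can equivalently cite \cite[Proposition A.3.9]{delaroche:renault}, where this construction is carried out in full.
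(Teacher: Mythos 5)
The paper offers no proof of this statement: it is imported verbatim as \cite[Proposition A.3.9]{delaroche:renault}, so there is no internal argument to compare yours against. Your reconstruction is the standard proof of this duality and is essentially sound: the explicit pairing, the isometry via a measurable selection from the countable dense family, injectivity by testing against the truncated sections $\mathbf{1}_A\sigma_n$, and surjectivity by fibrewise Radon--Nikodym densities glued over a single conull set. Two points would need explicit care in a full write-up. First, in the surjectivity step the fibrewise functional must be well defined on the \emph{vector} $\sigma_n(x)$ rather than on the index $n$: if $\sigma_m(x)=\sigma_n(x)$ for $m\neq n$ you need $g_m(x)=g_n(x)$, which follows from the Lipschitz estimate $|g_{\sigma_m-\sigma_n}(x)|\le\|L\|\,\|\sigma_m(x)-\sigma_n(x)\|$ once all such bounds are arranged on one conull set. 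Second, membership of $\eta$ in $\mathcal{M}^*$ (Example \ref{example_dual_bundle}) requires measurability of $x\mapsto\langle\eta(x),\sigma(x)\rangle$ for \emph{every} $\sigma\in\mathcal{M}$, not only for the countable family; this is recovered from the uniform bound $\|\eta(x)\|\le\|L\|$ together with a measurable choice of approximating indices $n_k(x)$. Both refinements are routine and consistent with the construction carried out in the cited reference.
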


\subsection{The disintegration isomorphism}\label{section disintegration}
In what follows we show a disintegration isomorphism similar to the one exposed in \cite[Section 4.1]{sarti:savini:23}, but valid in the more general context of measurable bundles of Banach spaces.

We fix a Borel map $\pi:Y\rightarrow X$ between standard Borel probability spaces $(Y,\nu)$ and $(X,\mu)$ and a \emph{separable} measurable bundle of Banach spaces $\calE=(E_x)_{x\in X}$ over $X$. If we assume that $\pi_\ast \nu=\mu$, by Hahn disintegration theorem \cite[Theorem 2.1]{Hahn} we must have that
$$\nu=\int_X \nu^xd\mu(x)\,,$$
where $\nu^x$ is a probability measure on $Y$ such that $\nu^x(Y \setminus \pi^{-1}(x))=0$ and 
$$
x \mapsto \nu^x(f)
$$
is Borel for every bounded Borel function $f$ on $Y$. 

Following Example \ref{example_bundle_pull_back}, we consider $\mathcal{F}= \pi^* \mathcal{E}$ and its measurable structure $\mathcal{N}$. We can define the space of integrable sections $\mathrm{L}^1(Y,\mathcal{F})$. On the other hand, we consider the bundle
$$ \mathcal{L}(Y,\mathcal{E}): x \longrightarrow \mathrm{L}^1((Y,\nu^x),E_x),$$
endowed with the measurable structure generated by 
\begin{equation}\label{eq family L1}
\mathcal{Q}\coloneqq \{\eta\in \mathcal{N}\,|\, \nu^x( \lVert \eta \rVert)<+\infty \text{ for } \mu-\text{a.e. } x\in X \}\,,
\end{equation}
which exists by \cite[Proposition 4.2]{fell:doran}. In fact any measurable section $\sigma \in \mathcal{Q}$ defines a vector field of $ \mathcal{L}(Y,\mathcal{E})$ by setting 
\begin{equation}\label{eq eta x}
x \mapsto \sigma^x(y)=
\begin{cases*}
0 &  $\text{if } y \notin \pi^{-1}(x)$\\
\sigma(y) & \textup{otherwise}. 
\end{cases*}
\end{equation}
If we consider the space of integrable sections $\mathrm{L}^1(X,  \mathcal{L}(Y,\mathcal{E}))$, we have the following:

\begin{thm}\label{proposition_disintegration_isomorphism}
  The function 
  $$\Phi: \mathrm{L}^1(Y,\mathcal{F})\rightarrow \Lone(X,  \mathcal{L}(Y,\mathcal{E}))\,,\;\;\;
  \sigma \mapsto (x\mapsto \sigma^x)$$
is an isometric isomorphism of Banach spaces. 
\end{thm}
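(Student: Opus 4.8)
The plan is to verify in turn that $\Phi$ is well defined, isometric (hence injective), and surjective, so that it is an isometric isomorphism. The engine of every step is the disintegration identity $\int_Y f\,d\nu=\int_X\int_Y f\,d\nu^x\,d\mu(x)$ applied to the scalar integrand $f(y)=\lVert\sigma(y)\rVert_{E_{\pi(y)}}$.

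First I would address well-definedness. Let $\sigma\in\calN$ be a representative of a class in $\Lone(Y,\calF)$, so that $\int_Y\lVert\sigma\rVert\,d\nu<+\infty$. By the disintegration identity this integral equals $\int_X\nu^x(\lVert\sigma\rVert)\,d\mu(x)$, whence $\nu^x(\lVert\sigma\rVert)<+\infty$ for $\mu$-a.e.\ $x$; in particular $\sigma$ lies, up to a null set, in the generating family $\calQ$ of \eqref{eq family L1}, and $\sigma^x\in\mathrm{L}^1((Y,\nu^x),E_x)$ for $\mu$-a.e.\ $x$. Since by construction every element of $\calQ$ defines a vector field of $\calL(Y,\calE)$ through \eqref{eq eta x}, the assignment $x\mapsto\sigma^x$ is automatically a measurable section of $\calL(Y,\calE)$, and its $\Lone$-norm is finite by the computation below. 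One also checks, again via disintegration, that $\sigma=0$ $\nu$-a.e.\ if and only if $\sigma^x=0$ in $\mathrm{L}^1((Y,\nu^x),E_x)$ for $\mu$-a.e.\ $x$, so that $\Phi$ descends to equivalence classes.

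The isometry is then the heart of the statement and follows directly from disintegration: for $\sigma$ as above,
$$\lVert\Phi(\sigma)\rVert_{\Lone(X,\calL(Y,\calE))}=\int_X\nu^x(\lVert\sigma\rVert)\,d\mu(x)=\int_Y\lVert\sigma\rVert\,d\nu=\lVert\sigma\rVert_{\Lone(Y,\calF)}.$$
In particular $\Phi$ is injective, and its image is a complete, hence closed, subspace of $\Lone(X,\calL(Y,\calE))$.

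Surjectivity is where I expect the real work, and it is the step I would single out as the main obstacle: one must glue the fiberwise classes $\xi(x)\in\mathrm{L}^1((Y,\nu^x),E_x)$ of a given $\xi\in\Lone(X,\calL(Y,\calE))$ into a single section on $Y$, even though each $\xi(x)$ is only defined $\nu^x$-almost everywhere. I would argue by density together with the closedness just observed. For a simple section $\sum_i\mathbbm{1}_{A_i}\eta_i$ with $A_i\subseteq X$ measurable and $\eta_i\in\calQ$, the equality $\sum_i\mathbbm{1}_{A_i}\eta_i=\Phi\bigl(\sum_i\mathbbm{1}_{\pi^{-1}(A_i)}\eta_i\bigr)$ holds, its argument being an integrable section of $\calF$ by the norm computation; hence all such simple sections lie in the image of $\Phi$. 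Since the measurable structure of $\calL(Y,\calE)$ is generated by $\calQ$, these simple sections are dense in $\Lone(X,\calL(Y,\calE))$, approximating a general integrable section in $\Lone$-norm. Because the image of $\Phi$ is closed and contains this dense set, it is all of $\Lone(X,\calL(Y,\calE))$, which concludes the proof. The delicate point is precisely this $\Lone$-density of the $\calQ$-generated simple sections: membership in the measurable structure is defined through almost-everywhere limits, and upgrading such limits to norm convergence requires a standard truncation and dominated convergence argument inside the Banach bundle $\calL(Y,\calE)$.
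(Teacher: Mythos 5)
Your proposal is correct and follows essentially the same route as the paper: well-definedness and the isometry both come from the Hahn disintegration identity applied to $\lVert\sigma\rVert$, and surjectivity is obtained by exhibiting a dense family in the image of $\Phi$ (the paper uses the countable family $\sum_k a_k\chi_{B_k}\cdot(\sigma_k\circ\pi)$ built from the separability of $\mathcal{E}$ and a generating sequence of Borel subsets of $Y$, whereas you use simple functions on $X$ times elements of the generating family $\mathcal{Q}$; both treat the $\mathrm{L}^1$-density of the chosen family as a standard fact). The only step you make explicit that the paper leaves implicit is that the image of $\Phi$ is closed because $\Phi$ is an isometry defined on a Banach space.
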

\begin{proof}
 We start noticing that $\Phi$ is well-defined. Indeed, given a measurable section $\sigma \in \mathcal{N}$ such that $\nu(\lVert \sigma \rVert)$ is finite, by Hahn disintegration theorem \cite[Theorem 2.1]{Hahn} we have 
\begin{equation} \label{equation phi isometry}
\nu(\lVert \sigma \rVert)=\int_Y \lVert \sigma \rVert(y) d\nu(y)=\int_X \int_Y \lVert \sigma \rVert(y) d\nu^x(y)d\mu(x)=\int_X \nu^x( \lVert \sigma \rVert)d\mu(x).
\end{equation}
As a consequence $\nu^x(\lVert \sigma \rVert)$ must be finite for $\mu$-almost every $x \in X$. The same formula shows that if $\sigma$ and $\sigma'$ differ by a set of $\nu$-measure zero, then $\Phi(\sigma)$ and $\Phi(\sigma')$ define the same class. Equation \eqref{equation phi isometry} proves also that $\Phi$ must be an isometry on the image: indeed we have that
\begin{align*}
\lVert \sigma \rVert_{\Lone(Y,\mathcal{F})}&=\int_X\int_Y \lVert \sigma \rVert(y)d\nu^x(y)d\mu(x)\\
&=\int_X \lVert \Phi(\sigma)(x) \rVert_{\Lone((Y,\nu^x),E_x)}d\mu(x)=\lVert \Phi(\sigma) \rVert_{\Lone(X, \mathcal{L}(Y,\mathcal{E}))}.
\end{align*}

We are left to show the surjectivity. By the separability of $\mathcal{\mathcal{E}}$, there exists a countable family of sections $(\sigma_n)_{n \in \mathbb{N}}$. Since $Y$ is standard Borel, there exists a countable family of Borel subsets $(B_k)_{k \in \mathbb{N}}$ which generates the $\sigma$-algebra. The family of sections
$$
\mathcal{S}:=\left\{ \sum_{k=1}^\ell a_k \chi_{B_k} \cdot (\sigma_k \circ \pi) \ | \ a_k \in \mathbb{Q}(i) \right\},
$$
is a subset of the generating family $\mathcal{Q}$ defined by Equation \eqref{eq family L1}. Since $\Phi$ is surjective on $\mathcal{S}$ and the latter is dense in $\mathrm{L}^1(X, \mathcal{L}(Y,\mathcal{E}))$, the claim is proved. 
\end{proof}

The last part of the proof of the previous theorem shows that the separability of $\mathcal{E}$ and the fact the $Y$ is standard Borel imply that $ \mathcal{L}(Y,\mathcal{E})$ is separable. By Example \ref{example_bundle_pull_back} the bundle $\mathcal{F}$ is separable too. Thanks to Proposition \ref{prop duality isomorphism delaroche}, we can dualize the isomorphism given in Theorem \ref{proposition_disintegration_isomorphism} to obtain
$$\Linf(Y,\mathcal{F}^*)\cong \Linf(X, \mathcal{L}(Y,\mathcal{E})^*),$$ where
$$ \mathcal{L}(Y,\mathcal{E})^*: x \longrightarrow \Linfw ((Y,\nu^x),E_x^*)\,,$$
and the measurable structure is dual to the one of $ \mathcal{L}(Y,\mathcal{E})$.

% \begin{oss}
% In Example \ref{example_bundle_linf}, we considered the bundle
% $\mathcal{E}=\Lone((\calG^{(\bullet)},\nu^{(\bullet)}_x),E))$ of $E$-valued essentially bounded $\nu^{(\bullet)}_x$-measurable functions on $\calG^{(\bullet)}$. 
% More generally, if $\mathcal{E}=(E_x)_{x\in \calG^{(0)}}$ is a measurable $\calG$-bundle of coefficient modules one can consider the space
% $$F_x\coloneqq \Linfw((\calG^{(\bullet)},\nu^{(\bullet)}_x),E_x)\,,$$
% that boils down to $\Linfw((\calG^{(\bullet)},\nu^{(\bullet)}_x),E))$ whenever
% $E_x=E$ for every $x\in \calG^{(0)}$ for some coefficient $\calG$-module $\calE$.
% Setting 
% $$\calM\coloneqq \left\{ \lambda:\calG^{(0)}\rightarrow \bigsqcup\limits_{x\in \calG^{(0)}} F_x\,|\, 
%  x\mapsto \langle \lambda(x),\sigma(x) \rangle \text{ is } \text{measurable }\forall \; \sigma \in \mathcal{N} \right\}\,.$$ 
%  where
% $\mathcal{N}$ is the measurable strucure of the bundle $\{\Lone((\mathcal{G}^{(\bullet)},\nu^{(\bullet)}_x),E_x)\}_{x\in X}$ and $\nu^{(\bullet)}_x$ is the
% fiber product on $\mathcal{G}^{(\bullet)}$ of $\nu^x$.
% Then $\Linf(\calG^{(0)},\Linfw((\calG^{(\bullet)},\nu^{(\bullet)}_x),E_x))$ is naturally identified with the quotient 
% $$ \{ \lambda \in \calM\,|\,  x\mapsto ||\lambda(x)||_{\infty} \text{ essentially bounded} \}/_\sim$$
% where two section are identified up to null-sets.
% \end{oss}

In our constructions, we will be interested into bundles of Banach spaces endowed with an isometric action by a measured groupoid. From now on, $\mathcal{G}$ will be a measured groupoid over 
a standard Borel probability space $(X,\mu)$ endowed with a probability measure $\nu=\int_X\nu^xd\mu(x)$ as described in Section \ref{section:groupoids}.

\begin{defn}\label{definition_action_on_bundle}
Let $\calG$ be a measured groupoid. Given a measurable bundle $\mathcal{E}$ of Banach spaces over $X$, we define the fibred product
$$
\calG \ast \mathcal{E} := \{ (g,v) \ |  v \in E_{s(g)} \}.
$$
An \emph{isometric left $\calG$-action} (or simply a \emph{$\mathcal{G}$-action}) on $\mathcal{E}$ is a map 
$$
L:\calG \ast \mathcal{E} \longrightarrow \mathcal{E}, \ \ (g,v) \mapsto L(g)v
$$
such that for, any $g\in \mathcal{G}$, the function 
$$L(g): E_{s(g)}\rightarrow E_{t(g)}, \ \ v \mapsto L(g)v$$
is a linear isometry and, if $\sigma$ is a measurable section for $\mathcal{E}$ on $X$, then the map $g\mapsto L(g) \sigma(s(g))$
is a measurable section of $t^*\calE$ on $\calG$.

A \emph{measurable $\calG$-bundle $(\mathcal{E},L)$ of Banach spaces} is a measurable bundle $\mathcal{E}$ endowed with an isometric left $\calG$-action $L$. 
\end{defn}

% \marginpar{Non sono sicuro che vogliamo eliminare l'azione $L$} Along the paper, to enlight the notation, we will otfen refer to $\mathcal{G}$-bundles omitting the action $L$.

\begin{defn}\label{def invariant section}
Let $\calG$ be a measured groupoid. Given a measurable $\calG$-bundle of Banach spaces $(\mathcal{E},L)$ over $X$, we say that a measurable section $\sigma$ of $\mathcal{E}$ is \emph{almost $\calG$-invariant} if we have
$$
L(g)\sigma(s(g))=\sigma(t(g)),
$$
for almost every $g \in \calG$. We will denote by $\Lone(X,\mathcal{E})^\calG$ (respectively $\Linf(X,\mathcal{E})^{\calG}$) the space of integrable (respectively essentially bounded) $\calG$-invariant sections of $\mathcal{E}$. 
\end{defn}

\begin{es}\label{example_dual_bundle_action}
  Let $\calE=(E_x)_{x\in X}$ be a separable measurable bundle of Banach spaces over $X$ and denote by $\calM$ its measurable structure. 
  Consider the dual bundle $\calE^*=(E_x^*)$.
  If $\calE$ is endowed with an isometric left action $L:\calG *\mathcal{E}\rightarrow \mathcal{E}$ of a measured groupoid $\calG$, then we can define the \emph{dual action} $L^\ast:\calG \ast \mathcal{E}^\ast \rightarrow \mathcal{E}^\ast$ in the following way
  $$\langle L^\ast(g)\theta ,v\rangle \coloneqq \langle \theta,  L(g)^{-1}v\rangle $$
  for every $\theta \in E^\ast_{s(g)}$ and $v \in E_{t(g)}$. 
  \end{es}

The next example will be the main ingredient in the definition of the bounded cohomology of a groupoid with bundle coefficients. 

\begin{es}\label{example_bundle_linf}
  Let $\calG$ be a measured groupoid. Consider a measurable $\calG$-bundle $(\mathcal{E},L)$ of Banach spaces over $X$ which is the dual of a separable measurable $\mathcal{G}$-bundle $(\mathcal{E}^{\flat},L^\flat)$. This means in particular that the predual action is fixed once and for all. 

Since we can disintegrate $$\nu=\int_X\nu^xd\mu(x),$$ Theorem \ref{proposition_disintegration_isomorphism} guarantees the
isometric isomorphism
\begin{equation}\label{eq L1 for G}
\Lone(\mathcal{G},\mathcal{F})\cong \Lone(X, \mathcal{L}^{\flat}(\mathcal{G},\mathcal{E})),
\end{equation}
where $\mathcal{F}=t^\ast \mathcal{E}^\flat$ and $ \mathcal{L}^{\flat}(\mathcal{G},\mathcal{E}):x \longrightarrow \Lone((\mathcal{G},\nu^x),E^{\flat}_x)$ is endowed with the measurable structure generated by Equation \eqref{eq family L1}.

By Proposition \ref{prop duality isomorphism delaroche}, we can dualize Equation \eqref{eq L1 for G} to obtain 
$$
  \Linf(\calG,\mathcal{F}^\ast)\cong \Linf(X, \mathcal{L}(\mathcal{G},\mathcal{E}))\,
$$
  where $ \mathcal{L}(\mathcal{G},\mathcal{E}):x \longrightarrow \Linfw((\mathcal{G},\nu^x),E_x)$ and the measurable structure is dual to the one by Equation \eqref{eq family L1}. Moreover, the bundle $ \mathcal{L}(\mathcal{G},\mathcal{E})$ admits a natural isometric left $\calG$-action given by
$$
\overline{L}:\calG \ast  \mathcal{L}(\mathcal{G},\mathcal{E}) \rightarrow  \mathcal{L}(\mathcal{G},\mathcal{E}), \ \ (g,\lambda) \mapsto \overline{L}(g)\lambda,
$$
where $(\overline{L}(g)\lambda)(g_0)=L(g)\lambda(g^{-1}g_0)$ and $L$ denotes the isometric left $\mathcal{G}$-action on $\mathcal{E}$. 

The above argument can actually be generalized to the fibred target map $t^{(\bullet+1)}: \mathcal{G}^{(\bullet+1)} \to X$, so that we obtain isometric isomorphisms
\begin{equation}\label{equation_disintegration_linf}
  \Linf(\mathcal{G}^{(\bullet+1)},(\mathcal{F}^{\bullet+1})^\ast)\cong \Linf(X, \mathcal{L}(\mathcal{G}^{(\bullet+1)},\mathcal{E}))\,.
\end{equation}
where $\mathcal{F}^{\bullet+1}=(t^{\bullet+1})^\ast \mathcal{E}^\flat$ and $ \mathcal{L}(\mathcal{G}^{(\bullet+1)},\mathcal{E}):x \longrightarrow \Linfw((\mathcal{G}^{(\bullet+1)},\nu_x^{(\bullet+1)}),E_x)$. 
Moreover, the isometric left $\mathcal{G}$-action 
$$
\overline{L}^{(\bullet+1)}:\calG \ast  \mathcal{L}(\mathcal{G}^{(\bullet+1)},\mathcal{E}) \rightarrow  \mathcal{L}(\mathcal{G}^{(\bullet+1)},\mathcal{E}), \ \ (g,\lambda) \mapsto \overline{L}^{(\bullet+1)}(g)\lambda,
$$
is similarly defined by $\overline{L}(g)\lambda(g_0,\ldots,g_\bullet)=L(g)\lambda(g^{-1}g_0,\ldots,g^{-1}g_\bullet)$. 
More generally, the above construction and the isomorphism of Equation \eqref{equation_disintegration_linf} can be generalized to any $\mathcal{G}$-space $S$. Such bundle will be the main object of Section \ref{section:bounded:cohomology}.

We finally point out that, when $\mathcal{E}$ is the constant bundle, Equation \eqref{equation_disintegration_linf} boils down to \cite[Equation (10)]{sarti:savini:23}.

  \end{es}

%\begin{es}\label{example_pull_back_via_groupoid_homomorphism}
%%Let  $\calG,\calH$ be two measured groupoids over $X$ and over $Y$ respectively with source and target maps $s_{\calG},t_{\calG},s_{\calH},t_{\calH}$. Consider $(\calF,L_\calF)$ a measurable $\calH$ bundle of Banach spaces over $Y$ endowed with a measurable structure $\calN$.
%A Borel homomorphism $f: \calG \rightarrow \calH$ restricts to a measurable maps between the unit spaces, and by Example \ref{example_bundle_pull_back} it defines the pullback bundle $f^*\calF$ over $X$.
%Moreover, there is an induced $\calG$-action on $f^*\calF$, that is
%$$f^*L_\calF:\calG\rightarrow \Isom(f^*\calF)\,,\;\;\; f^*L_\calF(g)\coloneqq L_\calF(f(g))\,.$$
%In fact, since $L_\calF(h):F_{s_{\calH}(h)}\rightarrow F_{t_{\calH}(h)}$ and by the commutativity of $f$ with the maps $s_{\calG},t_{\calG},s_{\calH},t_{\calH}$, then 
%$$f^*L_\calF(g)=L_\calF(f(g)):f^*F_{s_{\calG}(g)}=F_{s_{\calH}(f(g))}\rightarrow F_{t_{\calH}(f(g))}= f^*F_{t_{\calG}(g)}\,.$$
%\end{es}

\subsection{Homological algebra for measurable bundles}\label{section:homological}

In this section we will define the main framework that we need to introduce our cohomological theory. We start with the following:

\begin{defn}\label{definition_morphism_bundles}
Given two measurable bundles $\calE$ and $\calF$ of Banach spaces over $(X,\mu)$, a \emph{morphism} between them is a collection $\varphi=(\varphi_x)_{x\in X},$ where each $\varphi_x:E_x \rightarrow F_x$ is a bounded linear map such that
\begin{itemize}
  \item[(i)] for every measurable section $\sigma$ of $\mathcal{E}$
  the evaluation $x\mapsto \varphi_x(\sigma(x)) $ is a measurable section of $\mathcal{F}$;
  \item[(ii)] it is \emph{bounded}, namely there exists a constant $C$ such that
  $$\lVert \varphi_x \rVert<C,$$
  for almost every $x \in X$. We denoted by $$\lVert \varphi_x \rVert:=\sup\limits_{\lVert v \rVert_{E_x}=1} \lVert\phi_x(v)\rVert_{F_x}$$ the usual operator norm. 
\end{itemize}

If $(\calE,L_{\mathcal{E}})$ and $(\calF,L_{\mathcal{F}})$ are endowed with isometric left $\calG$-actions, a \emph{$\mathcal{G}$-morphism} is a morphism such that 
$$\varphi_{t(g)}(L_{\mathcal{E}}(g) v)= L_{\mathcal{F}}(g) \varphi_{s(g)}(v)\,$$
for almost every $g\in \mathcal{G}$ and every $v\in E_{s(g)}$. 
\end{defn}

\begin{oss}\label{oss morphism dense subset}
It is worth noticing that Definition \ref{definition_morphism_bundles}(i) can be actually checked on a generating family of sections. More precisely, let $\mathcal{E}, \mathcal{F}$ be measurable bundles of Banach spaces over $(X,\mu)$. Let $\mathcal{M}$ be the measurable structure on $\mathcal{E}$ and consider a generating linear subset $\mathcal{N} \subset \mathcal{M}$. First of all, notice that we can always substitute $\mathcal{N}$ with the set
$$
\mathcal{R}:=\{ \psi \cdot \sigma \ | \ \textup{$\psi$ is simple}, \sigma \in \mathcal{N}\}.
$$
The latter is still a linear generating subspace of $\mathcal{M}$. Given a family of continuous linear operators $\varphi_x:E_x \rightarrow F_x$, suppose that $\varphi_x(\sigma(x))$ is a measurable section of $\mathcal{F}$ for every $\sigma \in \mathcal{N}$. By the linearity of each $\varphi_x$, the same statement holds also for any measurable section of $\mathcal{R}$. The latter set is dense in $\mathcal{M}$ by \cite[Lemma 4.3]{fell:doran}. By the continuity of each $\varphi_x$, we must have that $\varphi_x(\sigma(x))$ is a measurable section of $\mathcal{F}$ for every $\sigma \in \mathcal{M}$. 
\end{oss}

\begin{notation} We fix, once and for all, the following conventions. 
  \begin{itemize}
    \item[(i)] Unless otherwise specified, all bundles are assumed to have the same base space, which is implicitly denoted by $(X,\mu)$. 
    \item[(ii)] Let $\mathcal{E},\mathcal{F},\mathcal{K}$ be measurable bundles over $(X,\mu)$. Given a morphism $\varphi:\calE \rightarrow \calF$ and another morphism $\psi:\calF \rightarrow \mathcal{K}$, we denote the composition $\psi \circ \varphi:\mathcal{E} \rightarrow \mathcal{K}$ as the morphism defined by $\psi_x \circ \varphi_x$ for every $x \in X$.
    \item[(iii)] We will always consider functions on $X$ \emph{up to null sets}. In the same flavour, two morphisms $\varphi,\theta:\mathcal{E} \rightarrow \mathcal{F}$ such that
    $
    \varphi_x=\theta_x
    $
    holds for almost every $x \in X$ will be identified. Thus, all the statements about bundles morphisms (such as identities, commutativity, etc.) must be intended to hold \emph{almost everywhere} with respect to the measure on the base space. However, to lighten the notation, we will always omit this fact. 
  \end{itemize}

  \end{notation}

\begin{defn}
  A morphism $\varphi:\mathcal{E}\to\mathcal{F}$ is \emph{injective} (respectively \emph{surjective}, \emph{bijective}, \emph{isometric}) if $\varphi_x$ is injective (respectively surjective, bijective, isometric) for every $x\in X$.
\end{defn}

\begin{oss}
Given a morphism $\varphi:\mathcal{E} \rightarrow \mathcal{F}$, we can naturally define the subbundle $\mathrm{Ker}(\varphi)$ of $\mathcal{E}$ by setting
$$
\mathrm{Ker}(\varphi):x \longrightarrow \mathrm{Ker}(\varphi_x),
$$
where we consider the restricted measurable structure given by Example \ref{es subbundle}.
\end{oss}

\begin{oss}
  By Definition \ref{definition_morphism_bundles}(ii), given a morphism $\varphi: \mathcal{E}\to \mathcal{F}$ there exists a well-defined map on the space of essentially bounded measurable sections, namely
  $$\varphi^*: \Linf(X, \mathcal{E})\to \Linf(X,\mathcal{F}), \ \ \varphi^\ast(\lambda)(x):=\varphi_x(\lambda(x)).$$ 
  Assume now that $(\mathcal{E},L_{\mathcal{E}})$ and $(\mathcal{F},L_{\mathcal{F}})$ are measurable $\calG$-bundles and $\varphi$ is a $\calG$-morphism. If $\sigma$ is an almost $\calG$-invariant section of $\mathcal{E}$, one has 
  \begin{align*}
   L_{\mathcal{F}}(g)\varphi_{s(g)}(\sigma(s(g)))&= \varphi_{t(g)}(L_{\mathcal{E}}(g)\sigma(s(g)))\\
    &= \varphi_{t(g)}(\sigma(t(g)))\,.
  \end{align*} 
  Thus, $\varphi^*$ preserves almost $\calG$-invariant sections and it restricts to a map
  $$\varphi^*: \Linf(X, \mathcal{E})^{\mathcal{G}}\to \Linf(X,\mathcal{F})^{\mathcal{G}}\,.$$
\end{oss}

\begin{defn}\label{definition_admissible}
Let $\calE,\calF$ measurable $\calG$-bundles.
A morphism $\varphi:\calE\to\calF$ is \emph{admissible} if there exists a morphism
$\eta:\calF\to\calE$ such that 
\begin{itemize}
  \item[(i)] $\lVert \eta \rVert_\infty:=\mathrm{ess \ sup}\lVert \eta_x \rVert \leq 1$;
  \item[(ii)] $\varphi \circ \eta \circ \varphi =\varphi.$
\end{itemize}
\end{defn}

We observe that, when an admissible morphism $\varphi$ is also injective, the condition (ii) above means that $\eta_x$ is a left inverse for $\varphi_x$.

Admissible morphisms are the main ingredient in the definition of relatively injective bundles. 
\begin{defn}\label{definition_relatively_injective}
  A measurable $\calG$-bundle of Banach spaces $\calE$ is \emph{relatively injective} if for every 
$\calF,\calK$ measurable $\calG$-bundles, for every injective admissible $\calG$-morphism 
$\alpha:\calF\to \calK$ with left inverse $\eta:\calK \rightarrow \calF$ and for every 
$\calG$-morphism 
$\beta:\calF\to\calE$, there exists a $\calG$-morphism 
$\psi:\calK\to\calE$ with $\lVert \psi\rVert_{\infty}\leq \lVert \beta \rVert_{\infty}$ such that
$$\psi \circ \alpha =\beta,$$ 
namely the following diagram of $\mathcal{G}$-morphisms commutes
  \begin{equation}\label{diagram_relative_injectivity}
    \begin{tikzcd}
      \mathcal{F}\arrow[hook]{rr}{\alpha} \arrow{rd}[swap]{\beta}    &&\mathcal{K}\arrow[dotted]{ld}{\psi} \arrow[bend right=30]{ll}[swap]{\eta} \\
      &\mathcal{E}\,.&
    \end{tikzcd}
  \end{equation}
\end{defn}

%\begin{oss}
%  It is worth noticing how the above diagram corresponds to a family of diagrams on each fiber
%  \begin{center}
%    \begin{tikzcd}
%      F_x\arrow[hook]{rr}{\alpha_x} \arrow{rd}[swap]{\beta_x}    &&K_x\arrow[dotted]{ld}{\psi_x} \arrow[bend right=30]{ll}[swap]{\eta_x}  \\
%      &E_x\,.&
%    \end{tikzcd}
%  \end{center}
%In fact, the commutativity is required for \emph{$\mu$-almost every} $x\in X$. 
%\end{oss}

A crucial property of admissible morphisms is that the domain can be actually decomposed as the direct sum of the kernel and its complement. 
\begin{defn}
  Let $\mathcal{E}=(E_x)_{x \in X}$ be a measurable bundle of Banach spaces. A subbundle $\mathcal{H}=(H_x)_{x \in X}<\mathcal{E}$ is \emph{complemented} if there exists another subbundle $\mathcal{K}=(K_x)_{x \in X}$ such that 
  $$H_x\oplus K_x=E_x$$
  for every $x\in X$.
\end{defn}

Following the case of Banach spaces \cite[Section 4.2]{monod:libro}, one can see that the existence of an idempotent morphism $p:\mathcal{E} \rightarrow \mathcal{E}$ such that $\mathrm{Im}(p)=\mathcal{H}$ implies that $\mathcal{H}$ is complemented. The fact that $p$ is idempotent means that $p^2=p$ and $\mathrm{Im}(p)=\mathcal{H}$ implies that $\mathrm{Im}(p)$ is actually a subbundle of $\mathcal{E}$. We call $p$ a \emph{projection} morphism on $\mathcal{H}$. 

 Let $\mathcal{E}$ be a bundle over $X$ with measurable structure $\mathcal{M}$ and consider a subbundle $\mathcal{H}<\mathcal{E}$. Suppose that there exists a projection $p: \mathcal{E}\rightarrow \mathcal{H}$. By the previous observation $\mathcal{H}$ is actually complemented. Exploiting the projection we can define a measurable structure on the quotient bundle
  $$\mathcal{Q}:x \longrightarrow Q_x\coloneqq E_x/H_x$$
as follows. First we endow each quotient $Q_x$ with the Banach structure induced by the norm
  $$\lVert [v]\rVert_{Q_x}\coloneqq \lVert v- p_x(v)\rVert_{E_x}\,.$$
If we denote by $\pi_x:E_x \rightarrow Q_x$ the quotient projection, a measurable structure for $\mathcal{Q}$ is generated by the family
  \begin{equation}\label{eq quotient bundle}
\mathcal{N}\coloneqq \{ \eta:x\mapsto \eta(x) \in \mathcal{Q}_x \,|\, \eta(x)=\pi_x(\sigma(x))\,,\, \sigma\in \mathcal{M}\}\,.
\end{equation}
Notice that the fact that $p$ is a morphism of measurable bundles guarantees that $\mathcal{N}$ satisfies the hypothesis of \cite[Proposition 4.2]{fell:doran}. With such measurable structure the projection $\pi:\mathcal{E} \rightarrow \mathcal{Q}$ becomes a morphism of measurable bundles. 

\begin{defn}
Let $\mathcal{E}$ be a measurable bundle and let $\mathcal{H}<\mathcal{E}$ be a subbundle. Suppose to have a projection $p:\mathcal{E} \rightarrow \calE$ on $\mathcal{H}$. We call \emph{quotient bundle} the pair $(\mathcal{Q},\mathcal{M})$, where $Q_x=E_x/H_x$ is the quotient Banach space and $\mathcal{M}$ is the measurable structure generated by the set of vector fields given by Equation \eqref{eq quotient bundle}. The morphism $\pi:\mathcal{E} \rightarrow \mathcal{Q}$ is called \emph{quotient map}. 
\end{defn}

The next step in our investigation is to prove a factorization property for quotient bundles. 

\begin{prop}\label{prop factorization quotient}
Let $\mathcal{E}$ be a bundle with measurable structure $\mathcal{M}$ and consider a subbundle $\mathcal{H}<\mathcal{E}$. Suppose that there exists a projection $p: \mathcal{E}\rightarrow \mathcal{H}$. If $\varphi:\mathcal{E} \rightarrow \mathcal{F}$ is a morphism of measurable bundles with $\mathcal{H} < \Ker(\varphi)$ and $\pi:\mathcal{E} \rightarrow Q$ is the quotient map, then there exists a unique morphism $\overline{\varphi}:\mathcal{Q} \rightarrow \calF$ such that
$$
\varphi=\overline{\varphi} \circ \pi. 
$$
\end{prop}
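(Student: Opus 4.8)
The plan is to construct $\overline{\varphi}$ fiberwise via the classical universal property of the quotient, and then verify that the resulting family satisfies the two defining conditions of a morphism of measurable bundles (Definition \ref{definition_morphism_bundles}), the only delicate point being measurability.

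First I would define, for each $x \in X$, the linear map $\overline{\varphi}_x: Q_x \to F_x$ by $\overline{\varphi}_x([v]) := \varphi_x(v)$. This is independent of the chosen representative: if $[v]=[w]$ then $v-w \in H_x \subseteq \mathrm{Ker}(\varphi_x)$ by the hypothesis $\mathcal{H}<\mathrm{Ker}(\varphi)$, whence $\varphi_x(v)=\varphi_x(w)$. By construction $\overline{\varphi}_x \circ \pi_x = \varphi_x$, and since each quotient projection $\pi_x$ is surjective this equation determines $\overline{\varphi}_x$ uniquely; this will immediately give the uniqueness of $\overline{\varphi}$ as well.

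Next I would check boundedness, that is Definition \ref{definition_morphism_bundles}(ii). The key observation is that $p_x(v) \in H_x \subseteq \mathrm{Ker}(\varphi_x)$, so that $\varphi_x(v) = \varphi_x(v - p_x(v))$. Recalling the quotient norm $\lVert [v]\rVert_{Q_x} = \lVert v - p_x(v)\rVert_{E_x}$, this yields
$$\lVert \overline{\varphi}_x([v])\rVert_{F_x} = \lVert \varphi_x(v - p_x(v))\rVert_{F_x} \leq \lVert \varphi_x \rVert \, \lVert v - p_x(v)\rVert_{E_x} = \lVert \varphi_x \rVert \, \lVert [v]\rVert_{Q_x},$$
so $\lVert \overline{\varphi}_x \rVert \leq \lVert \varphi_x \rVert$. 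Since $\varphi$ is a bounded morphism, $\lVert \varphi_x \rVert$ is essentially bounded on $X$, and hence so is $\lVert \overline{\varphi}_x \rVert$.

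Finally I would verify the measurability condition Definition \ref{definition_morphism_bundles}(i), which is the one step that genuinely uses the structure of the quotient. By Remark \ref{oss morphism dense subset} it suffices to test it on a generating family of sections of $\mathcal{Q}$, and I would take precisely the generating family $\mathcal{N}$ from Equation \eqref{eq quotient bundle}. For a section $\eta(x)=\pi_x(\sigma(x))$ with $\sigma \in \mathcal{M}$ one computes $\overline{\varphi}_x(\eta(x)) = \overline{\varphi}_x(\pi_x(\sigma(x))) = \varphi_x(\sigma(x))$, which is a measurable section of $\mathcal{F}$ simply because $\varphi$ is a morphism. Remark \ref{oss morphism dense subset} then upgrades this from $\mathcal{N}$ to the whole structure of $\mathcal{Q}$, completing the proof that $\overline{\varphi}$ is a morphism with $\varphi = \overline{\varphi}\circ\pi$. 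The main obstacle, such as it is, lies only in matching the generating family used in the measurability check with the one that defines the quotient's measurable structure, so that the computation collapses directly onto the already-known measurability of $\varphi$.
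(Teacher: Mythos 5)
Your proposal is correct and follows essentially the same route as the paper: fiberwise construction of $\overline{\varphi}_x$ via the universal property of the quotient, and verification of measurability on the generating family $\mathcal{N}$ of Equation \eqref{eq quotient bundle} via Remark \ref{oss morphism dense subset}, where the computation reduces to the measurability of $\varphi$ itself. Your boundedness step is in fact slightly more explicit than the paper's (which merely asserts $\lVert \overline{\varphi}_x\rVert<\infty$), since your estimate $\lVert\overline{\varphi}_x\rVert\leq\lVert\varphi_x\rVert$ via the quotient norm yields the required uniform essential bound directly.
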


\begin{proof}
The condition $H_x < \Ker(\varphi_x)$ implies the existence of a unique map $\overline{\varphi}_x:E_x/H_x \rightarrow F_x$ such that $\overline{\varphi}_x \circ \pi_x = \varphi_x$ for every $x \in X$. It is also clear that the boundedness of the family $(\varphi_x)_{x \in X}$ implies that 
$$
\lVert \overline{\varphi}_x \rVert < \infty. 
$$
We only need to show point $(i)$ in Definition \ref{definition_morphism_bundles}. As already discussed, in Remark \ref{oss morphism dense subset} we can check that property on the family
$$\mathcal{N}\coloneqq \{ \eta:x\mapsto \eta(x) \in \mathcal{Q}_x \,|\, \eta(x)=\pi_x(\sigma(x))\,,\, \sigma\in \mathcal{M}\}\,.$$
Let $\overline{\sigma}$ be a section in $\mathcal{N}$. This means that there exists a section $\sigma$ of $\mathcal{E}$ such that $\overline{\sigma}(x)=\pi_x(\sigma(x))$ for almost every $x \in X$. As a consequence, we must have that 
$$
\overline{\varphi}_x(\overline{\sigma}(x))=\overline{\varphi}_x(\pi_x(\sigma(x)))=\varphi_x(\sigma(x)). 
$$
Since $\varphi$ is a morphism, this concludes the proof. 
\end{proof}

We are now ready to show that both the kernel and the image of an admissible morphism are complemented subbundles. 

\begin{prop}\label{proposition_complemented}
Let $\varphi:\mathcal{E}\to \mathcal{F}$ be a morphism of measurable bundles. If $\varphi$ is admissible, then both $\Ker (\varphi)$ and $\Ima (\varphi)$ are complemented. In particular, $\Ima (\varphi)$ is a subbundle of $\mathcal{F}$.
\end{prop}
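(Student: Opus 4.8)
The plan is to reduce everything to the fact, recalled just before the statement, that an idempotent morphism $p\colon \calE\to\calE$ (i.e.\ $p^2=p$) has complemented image and that this image is automatically a subbundle with the restricted measurable structure. So the whole proof amounts to manufacturing two suitable idempotent morphisms out of the admissibility data, one on $\calE$ with image $\Ker(\varphi)$ and one on $\calF$ with image $\Ima(\varphi)$.

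Let $\eta\colon\calF\to\calE$ be the morphism supplied by admissibility, so that $\varphi\circ\eta\circ\varphi=\varphi$ and $\lVert\eta\rVert_\infty\leq 1$. First I would form $P:=\eta\circ\varphi\colon\calE\to\calE$. By the composition convention in the Notation, $P$ is a morphism, and it is idempotent because
$$P^2=\eta\varphi\eta\varphi=\eta(\varphi\eta\varphi)=\eta\varphi=P.$$
A fibrewise computation identifies its kernel: if $P_x v=0$ then $\varphi_x v=\varphi_x\eta_x\varphi_x v=\varphi_x P_x v=0$, while conversely $\varphi_x v=0$ forces $P_x v=\eta_x\varphi_x v=0$; hence $\Ker(P_x)=\Ker(\varphi_x)$ for every $x$. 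Since $P$ is idempotent, $\Id-P$ is again an idempotent morphism (one checks $(\Id-P)^2=\Id-P$ directly, and a difference of morphisms is a morphism by axiom (1) of the measurable structure, with fibrewise norm bounded by $1+\lVert P\rVert_\infty$), and its fibrewise image equals $\Ker(P_x)=\Ker(\varphi_x)$. Applying the recalled fact to $\Id-P$ shows that $\Ker(\varphi)$ is complemented.

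For the image I would instead set $R:=\varphi\circ\eta\colon\calF\to\calF$. Again $R$ is a morphism and
$$R^2=\varphi\eta\varphi\eta=(\varphi\eta\varphi)\eta=\varphi\eta=R,$$
so $R$ is idempotent. Fibrewise, $\Ima(R_x)\subseteq\Ima(\varphi_x)$ is immediate, while any $w=\varphi_x v$ is fixed by $R_x$ since $R_x w=\varphi_x\eta_x\varphi_x v=\varphi_x v=w$; hence $\Ima(R_x)=\Ima(\varphi_x)$. Applying the recalled fact to $R$ yields that $\Ima(\varphi)$ is complemented and, in particular, is a subbundle of $\calF$, which is the final assertion.

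I do not expect a serious obstacle here: the algebraic identities are immediate consequences of $\varphi\eta\varphi=\varphi$, and identification of the kernels and images is an elementary linear-algebra argument performed fibrewise. The only point requiring genuine care is verifying that $P$, $R$, and $\Id-P$ satisfy Definition \ref{definition_morphism_bundles}; boundedness is clear since $\lVert P_x\rVert\leq\lVert\eta_x\rVert\,\lVert\varphi_x\rVert$ and $\lVert R_x\rVert$ are essentially bounded, and condition (i) follows because sums, scalar multiples, and compositions of morphisms preserve measurability of sections. The single most delicate ingredient—that an idempotent morphism's image carries a subbundle structure—is exactly the fact stated in the text immediately before the proposition, so it may be invoked directly.
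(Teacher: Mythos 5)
Your proposal is correct and follows essentially the same route as the paper: both build the idempotents $\eta\circ\varphi$ on $\mathcal{E}$ and $\varphi\circ\eta$ on $\mathcal{F}$ from the identity $\varphi\circ\eta\circ\varphi=\varphi$ and invoke the fact that an idempotent morphism has complemented image. The only cosmetic difference is that the paper works with $\Id-\varphi\eta$ (whose kernel is $\Ima(\varphi)$) where you work directly with $\varphi\eta$ (whose image is $\Ima(\varphi)$), which is immaterial.
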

\begin{proof}
We follow the line of \cite[Proposition 4.2.1]{monod:libro}. By hypothesis there exists a morphism $\eta:\mathcal{F}\to \mathcal{E}$ such that $\varphi \circ \eta \circ \varphi=\varphi$ for almost every $x \in X$. The morphism $\Id-\eta \varphi: \mathcal{E}\to \mathcal{E}$ is idempotent, namely we have that
\begin{align*}
  (\Id-\sigma \varphi)^2=\Id-\sigma\varphi\,.
\end{align*}
Additionally, its kernel is $\Ima(\eta)$ and its image is precisely $\mathrm{Ker}(\varphi)$. Thus we obtain that 
$$\Ker(\varphi)\oplus \Ima(\eta)=\mathcal{E}\,.$$
Similarly, the morphism 
$\Id- \varphi\eta: \mathcal{F}\to \mathcal{F}$ is idempotent, its image is $\Ker(\eta)$ and its kernel is $\Ima(\varphi)$. As a consequence we obtain that  
$$\Ima(\varphi)\oplus \Ker(\eta)=\mathcal{F}\,,$$
and the conclusion follows.
\end{proof}

Relative injectivity ensures that the extension problem of Diagram \eqref{diagram_relative_injectivity} has a solution whenever $\alpha$ is \emph{injective}. In order to prove the fundamental lemma of homological algebra in our setting we need to drop the injectivity assumption. Precisely, we will consider the following \emph{generalized extension problem}
\begin{equation}\label{diagram_generalized_extension_problem}
  \begin{tikzcd}
    \Ker(\alpha)\arrow[hook]{rr}{} & &  \mathcal{F} \arrow{rr}{\alpha}\arrow[swap]{rd}{\beta} & & \mathcal{K}\arrow[dotted]{ld}{?} \arrow[bend right=30]{ll}[swap]{\eta}\\
&& &\mathcal{E}\,,& 
  \end{tikzcd}    
\end{equation}
where $\alpha \circ \eta \circ \alpha=\alpha$. 

\begin{prop}\label{proposition_generalized_extension_problem}
 Let $\alpha:\calF\to \calK$ be an admissible
$\calG$-morphism and 
$\beta:\calF\to\calE$ a $\calG$-morphism. If $\mathcal{E}$ is relatively injective and $\mathrm{Ker}(\alpha)$ is a subbundle of $\mathrm{Ker}(\beta)$, then there exists a $\calG$-morphism 
$\psi:\calK\to\calE$ with $\lVert \psi \rVert_{\infty}\leq \lVert \beta \rVert_{\infty}$ such that
$$\psi \circ \alpha=\beta.$$
\end{prop}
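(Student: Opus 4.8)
The plan is to reduce this \emph{generalized} extension problem to an ordinary instance of relative injectivity by dividing out $\Ker(\alpha)$. First I would use the admissibility of $\alpha$ to produce the morphism $p := \Id - \eta\alpha : \mathcal{F} \to \mathcal{F}$; exactly as in the proof of Proposition \ref{proposition_complemented} this is idempotent with image precisely $\Ker(\alpha)$, so it is a projection onto $\Ker(\alpha)$ and lets me form the quotient bundle $\mathcal{Q} := \mathcal{F}/\Ker(\alpha)$ together with its quotient map $\pi : \mathcal{F} \to \mathcal{Q}$. Since $\alpha$ is a $\mathcal{G}$-morphism, $\Ker(\alpha)$ is a $\mathcal{G}$-invariant subbundle (if $\alpha_{s(g)}(v)=0$ then $\alpha_{t(g)}(L_{\mathcal{F}}(g)v)=L_{\mathcal{K}}(g)\alpha_{s(g)}(v)=0$), so the action on $\mathcal{F}$ descends to a well-defined linear action $\overline{L}(g)[v] := [L_{\mathcal{F}}(g)v]$ on $\mathcal{Q}$.

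Next I would factor both maps through $\pi$. Applying Proposition \ref{prop factorization quotient} to $\alpha$ itself (whose kernel trivially contains $\Ker(\alpha)$) yields a morphism $\overline{\alpha}:\mathcal{Q}\to\mathcal{K}$ with $\alpha = \overline{\alpha}\circ\pi$; this $\overline{\alpha}$ is injective because we have quotiented by exactly the kernel. Since $\Ker(\alpha)$ is a subbundle of $\Ker(\beta)$ by hypothesis, the same proposition provides $\overline{\beta}:\mathcal{Q}\to\mathcal{E}$ with $\beta = \overline{\beta}\circ\pi$. A direct computation using that $\alpha$ and $\beta$ are $\mathcal{G}$-morphisms shows $\overline{\alpha}$ and $\overline{\beta}$ are $\mathcal{G}$-morphisms as well. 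Moreover $\overline{\alpha}$ is admissible, with the explicit left inverse $\overline{\eta} := \pi\circ\eta$: using $\overline{\alpha}\circ\pi=\alpha$ and $\pi\circ\eta\circ\alpha=\pi\circ(\Id-p)=\pi$ (because $\pi p=0$, as $\Ima(p)=\Ker(\alpha)=\Ker(\pi)$) one finds $\overline{\eta}\circ\overline{\alpha}\circ\pi=\pi$, and the surjectivity of $\pi$ gives $\overline{\eta}\circ\overline{\alpha}=\id_{\mathcal{Q}}$.

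The main obstacle is to turn $\mathcal{Q}$ into a genuine measurable $\mathcal{G}$-bundle, that is, to guarantee that $\overline{L}$ is \emph{isometric}, as required before invoking relative injectivity. The norm attached to $\mathcal{Q}$ in the construction is the projection norm $\lVert [v]\rVert = \lVert v - p_x(v)\rVert$, which is $\mathcal{G}$-invariant only when $p$ is $\mathcal{G}$-equivariant; since $\eta$ is merely a morphism this may fail. I would circumvent this by renorming each fibre with the canonical quotient norm $\lVert [v]\rVert = \inf_{w\in\Ker(\alpha)_x}\lVert v - w\rVert$. This norm is equivalent to the projection norm, hence it defines the same measurable structure, and it is manifestly preserved by each isometry $L_{\mathcal{F}}(g)$, which carries $\Ker(\alpha)_{s(g)}$ isometrically onto $\Ker(\alpha)_{t(g)}$; thus $\overline{L}$ is isometric. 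With this norm one also has $\lVert \pi_x\rVert \le 1$, so $\lVert\overline{\eta}\rVert_\infty \le \lVert\eta\rVert_\infty \le 1$, and $\lVert\overline{\beta}_x\rVert \le \lVert\beta_x\rVert$ for every $x$, whence $\lVert\overline{\beta}\rVert_\infty \le \lVert\beta\rVert_\infty$.

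Finally I would apply the relative injectivity of $\mathcal{E}$ (Definition \ref{definition_relatively_injective}) to the injective admissible $\mathcal{G}$-morphism $\overline{\alpha}:\mathcal{Q}\to\mathcal{K}$ with left inverse $\overline{\eta}$ and to the $\mathcal{G}$-morphism $\overline{\beta}:\mathcal{Q}\to\mathcal{E}$, obtaining a $\mathcal{G}$-morphism $\psi:\mathcal{K}\to\mathcal{E}$ with $\psi\circ\overline{\alpha} = \overline{\beta}$ and $\lVert\psi\rVert_\infty \le \lVert\overline{\beta}\rVert_\infty \le \lVert\beta\rVert_\infty$. Then $\psi\circ\alpha = \psi\circ\overline{\alpha}\circ\pi = \overline{\beta}\circ\pi = \beta$, which is exactly the desired conclusion.
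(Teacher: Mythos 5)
Your proposal is correct and follows essentially the same route as the paper: form the quotient $\mathcal{Q}=\mathcal{F}/\Ker(\alpha)$ via Proposition \ref{proposition_complemented}, factor $\alpha$ and $\beta$ through it via Proposition \ref{prop factorization quotient}, and solve the resulting ordinary extension problem using the relative injectivity of $\mathcal{E}$. Your write-up is in fact more complete than the paper's on two points it leaves implicit — the verification that $\overline{\alpha}$ is injective and admissible with left inverse $\pi\circ\eta$ of norm at most one, and the renorming of $\mathcal{Q}$ by the canonical quotient norm so that the induced $\mathcal{G}$-action is isometric (hence $\mathcal{Q}$ is a legitimate measurable $\mathcal{G}$-bundle) and the bound $\lVert\overline{\beta}\rVert_\infty\leq\lVert\beta\rVert_\infty$ holds — both of which are genuinely needed to invoke Definition \ref{definition_relatively_injective}.
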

\begin{proof}
Since $\alpha$ is admissible, by Proposition \ref{proposition_complemented} the subbundle $\Ker(\alpha)$ is complemented in $\mathcal{F}$. We can consider the quotient bundle $\mathcal{Q}\coloneqq \mathcal{F}/\Ker(\alpha)$.
 By Proposition \ref{prop factorization quotient} we have an induced $\mathcal{G}$-morphism $\overline{\alpha}: \mathcal{Q}\to \mathcal{K}$ such that the following diagram commutes
 \begin{center}
  \begin{tikzcd}
    \mathcal{F}\arrow{rr}{\alpha} \arrow{rd}[swap]{p}    &&\mathcal{K}  \\
    &\mathcal{Q}\arrow{ru}{\overline{\alpha}}\,.&
  \end{tikzcd}
\end{center}
We can collect all those maps in the following diagram
\begin{center}
  \begin{tikzcd}
    \calF\arrow{ddrr}[swap]{p}\arrow{rrrrdd}{\alpha}\arrow[bend right=30]{rrdddd}[swap]{\beta}&&&& \\
    &&&&\\
   && \mathcal{Q}\arrow{rr}[swap]{\overline{\alpha}}\arrow[swap]{dd}{\overline{\beta}} & & \mathcal{K}\arrow[dotted]{ddll}{?}\\
   &&&&\\
 &&\mathcal{E}.&& \,
  \end{tikzcd}    
\end{center}
Here $\overline{\beta}:\mathcal{Q}\to \mathcal{E}$ is the $\mathcal{G}$-morphism whose existence is guaranteed by the condition $\Ker(\alpha)<\Ker(\beta)$. 
The conclusion follows by taking the solution of the extension problem of the lower triangle, ensured by the relative injectivity of $\mathcal{E}$.
\end{proof}

We conclude the part about relative injectivity with the following useful property.

\begin{lemma}\label{lemma:rel:inj2}
  Let $\mathcal{E},\mathcal{F}$ be measurable $\mathcal{G}$-bundles and $\varphi: \mathcal{E}\to \mathcal{F}$ a $\mathcal{G}$-morphism with $\lVert \varphi \rVert_\infty \leq 1$. Assume that there exists a morphism $\sigma: \mathcal{F}\to \mathcal{E}$ such that
\begin{itemize}
  \item[(i)] $\sigma\circ \varphi=\id$;
  \item[(ii)] $\lVert \sigma \rVert_{\infty}\leq 1$.
\end{itemize}
If $\mathcal{F}$ is relatively injective, then so is $\mathcal{E}$. 
\end{lemma}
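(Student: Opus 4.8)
The plan is to exploit the fact that the hypotheses exhibit $\mathcal{E}$ as a \emph{retract} of $\mathcal{F}$ in the category of measurable $\mathcal{G}$-bundles (with $\varphi$ a section and $\sigma$ a retraction, since $\sigma\circ\varphi=\id$), and to reduce every extension problem for $\mathcal{E}$ to one for $\mathcal{F}$. Concretely, to verify Definition \ref{definition_relatively_injective} for $\mathcal{E}$, I would start from arbitrary measurable $\mathcal{G}$-bundles $\mathcal{A},\mathcal{B}$, an injective admissible $\mathcal{G}$-morphism $\alpha:\mathcal{A}\to\mathcal{B}$ with left inverse, and a $\mathcal{G}$-morphism $\beta:\mathcal{A}\to\mathcal{E}$; the goal is to produce a $\mathcal{G}$-morphism $\psi:\mathcal{B}\to\mathcal{E}$ with $\psi\circ\alpha=\beta$ and $\lVert\psi\rVert_\infty\le\lVert\beta\rVert_\infty$.

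The first step is to transport the problem to $\mathcal{F}$. The composition $\varphi\circ\beta:\mathcal{A}\to\mathcal{F}$ is again a $\mathcal{G}$-morphism, and since $\lVert\varphi\rVert_\infty\le 1$ the submultiplicativity of the operator norm gives $\lVert\varphi\circ\beta\rVert_\infty\le\lVert\beta\rVert_\infty$. Applying the relative injectivity of $\mathcal{F}$ to the injective admissible morphism $\alpha:\mathcal{A}\to\mathcal{B}$ (with the same left inverse) and to $\varphi\circ\beta:\mathcal{A}\to\mathcal{F}$ yields a $\mathcal{G}$-morphism $\tilde{\psi}:\mathcal{B}\to\mathcal{F}$ with $\tilde{\psi}\circ\alpha=\varphi\circ\beta$ and $\lVert\tilde{\psi}\rVert_\infty\le\lVert\varphi\circ\beta\rVert_\infty\le\lVert\beta\rVert_\infty$.

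The second step is to pull the solution back along $\sigma$, setting $\psi:=\sigma\circ\tilde{\psi}:\mathcal{B}\to\mathcal{E}$, which is a $\mathcal{G}$-morphism as a composition of $\mathcal{G}$-morphisms. The retraction identity then gives commutativity,
\[
\psi\circ\alpha=\sigma\circ\tilde{\psi}\circ\alpha=\sigma\circ\varphi\circ\beta=\id\circ\,\beta=\beta,
\]
while the bound $\lVert\sigma\rVert_\infty\le 1$ gives $\lVert\psi\rVert_\infty\le\lVert\sigma\rVert_\infty\,\lVert\tilde{\psi}\rVert_\infty\le\lVert\beta\rVert_\infty$, which is exactly the norm constraint required by Definition \ref{definition_relatively_injective}.

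There is essentially no genuine obstacle here; the argument is the standard "retract of a relatively injective object is relatively injective". The only points to watch are that all morphisms remain $\mathcal{G}$-equivariant (preserved under composition by the conventions in the Notation paragraph) and that the two norm estimates both stay within the constant $\lVert\beta\rVert_\infty$, which is immediate from $\lVert\varphi\rVert_\infty\le 1$ and $\lVert\sigma\rVert_\infty\le 1$. The one subtlety worth flagging is that $\alpha$ is only assumed injective and admissible rather than isometric, but this is precisely the generality in which relative injectivity of $\mathcal{F}$ is formulated, so the same $\alpha$ and its left inverse may be reused verbatim and no extra work is needed.
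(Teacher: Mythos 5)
Your argument is exactly the paper's: solve the extension problem for $\varphi\circ\beta$ using the relative injectivity of $\mathcal{F}$, post-compose the solution with $\sigma$, and chain the two norm bounds $\lVert\varphi\rVert_\infty\le 1$ and $\lVert\sigma\rVert_\infty\le 1$. The only point worth noting — and the paper's own proof glosses over it in the same way — is that your claim that $\sigma\circ\tilde{\psi}$ is a $\mathcal{G}$-morphism tacitly treats $\sigma$ as $\mathcal{G}$-equivariant, whereas the statement only calls it a morphism; this equivariance does hold in the application (Proposition \ref{proposition_relative_injective}), where $\sigma$ is built from an invariant system of means.
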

\begin{proof}
  We consider the extension problem 
  \begin{center}
  \begin{tikzcd}
    \mathcal{A} \arrow{rr}{\alpha}\arrow[swap]{rd}{\beta} & & \mathcal{B}\arrow[dotted]{ld}{?} \arrow[bend right=30]{ll}[swap]{\eta}\\
 &\mathcal{E}\,,& 
  \end{tikzcd}    
\end{center}
  Since $\mathcal{F}$ is relatively injective, there exists $\psi: \mathcal{B}\to \mathcal{F}$ of norm at most $\lVert \varphi\circ \beta \rVert_{\infty}$ such that $\varphi\circ \beta= \psi\circ \alpha$. 
  We set $\chi\coloneqq \sigma\circ \psi$, so that the following diagram commutes 
  \begin{center}
    \begin{tikzcd}
      \mathcal{A}\arrow{r}{\alpha}\arrow[swap]{rd}{\beta} &   \mathcal{B} \arrow[bend left=30]{rdd}{\psi}\arrow[ bend right=45,swap]{l}{\eta}\arrow{d}{\chi}\\
    &    \mathcal{E}\arrow[swap]{rd}{\varphi} & \\
  & & \mathcal{F} \arrow[bend left=45]{lu}{\sigma}\,.
    \end{tikzcd}    
  \end{center}
  We compute 
  $$\lVert \chi \rVert_{\infty}\leq \lVert \psi \rVert_{\infty}\;\lVert \sigma \rVert_{\infty}\leq \lVert \psi \rVert_{\infty}\leq \lVert \varphi \rVert_{\infty}\lVert \beta \rVert_{\infty}\leq \lVert \beta \rVert_{\infty}\,.$$ 
Hence $\chi$ is a solution of the initial extension problem, thus $\mathcal{E}$ is relatively injective. 
\end{proof}

We move on the construction of our theory giving a list of definitions of classic notions of homological algebra translated in the framework of bundles. Since we will consider only $\mathcal{G}$-bundles and $\mathcal{G}$-morphisms, we introduce the category of complexes and their morphisms only in the equivariant setting.
\begin{defn}\label{definition_complex}
  A \emph{complex} $(\calE^{\bullet},\delta^{\bullet})$ of measurable $\calG$-bundles is a sequence
  \begin{center}
    \begin{tikzcd}
      \cdots \arrow{r}& \mathcal{E}^{\bullet-1}\arrow{r}{\delta^{\bullet-1}}&\mathcal{E}^{\bullet}\arrow{r}{\delta^{\bullet}}&\mathcal{E}^{\bullet+1}\arrow{r}&\cdots
    \end{tikzcd}
  \end{center}
  where each $\delta^{\bullet}$ is a $\calG$-morphism and $\delta^{\bullet+1}\circ \delta^{\bullet}=0$.

  A \emph{morphism} of complexes $(\calE^{\bullet},\delta^{\bullet})$ and $(\calF^{\bullet},\partial^{\bullet})$ is a sequence of $\mathcal{G}$-morphisms $\varphi^{\bullet}:\mathcal{E}^{\bullet}  \to \mathcal{F}^{\bullet}$ that commute with the coboundary operators, that is
$$
\varphi^{\bullet+1}\circ \delta^\bullet  = \partial^\bullet \circ \varphi^\bullet\,,
$$
in any degree.
\end{defn}

\begin{defn}
  A \emph{chain homotopy} between two $\mathcal{G}$-morphisms $\varphi^{\bullet},\psi^{\bullet}:(\calE^{\bullet},\delta^{\bullet})\to(\calF^{\bullet},\partial^{\bullet})$ is a family of $\mathcal{G}$-morphisms $k^{\bullet}:\mathcal{E}^{\bullet}  \to \mathcal{F}^{\bullet-1}$ such that 
  \begin{equation}\label{equation:chain:homotopy}
    \partial^{\bullet-1}\circ k^{\bullet}+k^{\bullet+1}\circ \delta^{\bullet}=\varphi^{\bullet}-\psi^{\bullet}\,,
  \end{equation}
in any degree.
\end{defn}

\begin{defn}\label{def strong}
  A \emph{contracting homotopy} for a complex $(\mathcal{E}^{\bullet},\delta^{\bullet})$ is a family of (not necessarily $\mathcal{G}$-equivariant) morphisms 
  $$k^{\bullet}:\calE^{\bullet}\to\calE^{\bullet-1}$$
  % with $||k^{\bullet}_x(v)||_{\calE^{\bullet-1}}\leq ||v||_{\calE^{\bullet}}$ for every $v\in \calE^{\bullet}_x$ and almost every $x\in \calG^{(0)}$, and
   such that 
  $$\delta^{\bullet-1}\circ k^{\bullet}+k^{\bullet+1}\circ \delta^{\bullet}=\id_{\mathcal{E}^{\bullet}}$$
  and $\lVert k^\bullet \rVert \leq 1$, in any degree.

A complex $(\calE^{\bullet},\delta^{\bullet})$ of measurable $\mathcal{G}$-bundles is \emph{strong} if it admits a contracting homotopy.
\end{defn}
  
\begin{defn}
  A \emph{resolution} of a measurable $\calG$-bundle $\mathcal{E}$ is an augmented complex
  \begin{center}
    \begin{tikzcd}
      0 \arrow{r}& \mathcal{E}\arrow{r}{\epsilon}&\mathcal{E}^0\arrow{r}{\delta^{0}}&\mathcal{E}^1\arrow{r}&\cdots
    \end{tikzcd}
  \end{center}
  of measurable $\calG$-bundles $(\calE^{\bullet},\delta^{\bullet})$ with $\mathcal{E}^{\bullet}=0$ for $\bullet<0$ which is \emph{exact}, namely such that $\Ima(\epsilon)=\Ker(\delta^0)$ and $\Ima(\delta^{\bullet-1})=\Ker(\delta^\bullet)$, in any degree.
\end{defn}

\begin{oss}\label{remark_induced_complex}
  Given a complex of measurable $\calG$-bundles $(\calE^{\bullet},\delta^{\bullet})$, one can construct the complex of essentially bounded sections, namely the complex of Banach spaces
$$
\begin{tikzcd}
      \cdots \arrow{r}& \Linf(X,\mathcal{E}^{\bullet-1})\arrow{r}{d^{\bullet-1}}&\Linf(X,\mathcal{E}^{\bullet})\arrow{r}{d^{\bullet}}&\Linf(X,\mathcal{E}^{\bullet+1})\arrow{r}&\cdots
\end{tikzcd}
$$    
where $d^\bullet$ is the map induced by the coboundary $\delta^\bullet$. Since each morphism $\delta^\bullet$ is actually a $\calG$-morphism, we can restrict to the subcomplex of almost $\calG$-invariant vectors
$$
\begin{tikzcd}
      \cdots \arrow{r}& \Linf(X,\mathcal{E}^{\bullet-1})^{\calG} \arrow{r}{d^{\bullet-1}}&\Linf(X,\mathcal{E}^{\bullet})^{\calG} \arrow{r}{d^{\bullet}}&\Linf(X,\mathcal{E}^{\bullet+1})^{\calG} \arrow{r}&\cdots
\end{tikzcd}
$$ 

In the same spirit, a contracting homotopy for $(\calE^{\bullet},\delta^{\bullet})$ descends to a contracting homotopy for $(\Linf(X, \mathcal{E}^{\bullet}),d^{\bullet})$ (but not necessarily of $(\Linf(X, \mathcal{E}^{\bullet})^{\mathcal{G}},d^{\bullet})$).
\end{oss}

The following result is the analogue of the fundamental lemma of homological algebra. We refer to \cite[Section 7.2]{monod:libro} in the case of continuous bounded cohomology of groups.

\begin{lemma}\label{lemma_fundamental_lemma}
  Let $\calG$ be a measured groupoid and $\calE,\calF$ be measurable $\calG$-bundles.
  Let $(\calE^{\bullet},\delta^{\bullet})$ be a strong resolution of $\mathcal{E}$ with augmentation $\epsilon:\calE\to\calE^0$. 
  Let $(\calF^{\bullet},\partial^{\bullet})$ be a resolution of $\mathcal{F}$ by relatively injective bundles with augmentation $\varepsilon:\calF\to\calF^0$. 
  If $\varphi:\calE\to\calF$ is a $\calG$-morphism, then there exists a unique, up to chain homotopy, extension of $\varphi$ to a morphism $\varphi^{\bullet}:\mathcal{E}^{\bullet}\to\calF^{\bullet}$.
  \end{lemma}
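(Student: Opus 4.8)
The plan is to run the classical comparison argument by induction on the cohomological degree, but to replace the usual extension along an injective object with the \emph{generalized extension problem} of Proposition \ref{proposition_generalized_extension_problem}. The two ingredients that make this work are already available: the coboundary maps of the \emph{strong} resolution $(\calE^\bullet,\delta^\bullet)$ are admissible, with the contracting homotopy playing the role of the near-inverse $\eta$, and the bundles $\calF^\bullet$ are relatively injective, so every extension problem with target $\calF^\bullet$ admits a norm-controlled $\calG$-morphism as solution. Throughout I adopt the augmentation conventions $\delta^{-1}=\epsilon$, $\partial^{-1}=\varepsilon$, $\calE^{-1}=\calE$, $\calF^{-1}=\calF$.

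For existence I would set $\varphi^{-1}:=\varphi$ and construct the $\calG$-morphisms $\varphi^n\colon\calE^n\to\calF^n$ one degree at a time, requiring $\varphi^n\circ\delta^{n-1}=\partial^{n-1}\circ\varphi^{n-1}$. At stage $n$ I apply Proposition \ref{proposition_generalized_extension_problem} with $\alpha:=\delta^{n-1}$ and $\beta:=\partial^{n-1}\circ\varphi^{n-1}$. Admissibility of $\alpha$ is exactly where strongness enters: the relation $\delta^{n-1}k^n+k^{n+1}\delta^n=\id$ together with $\delta^n\delta^{n-1}=0$ yields $\delta^{n-1}\circ k^n\circ\delta^{n-1}=\delta^{n-1}$, and $\lVert k^n\rVert\le 1$; at the bottom the degree $-1$ relation $k^0\epsilon=\id_\calE$ of the contracting homotopy makes $\epsilon$ admissible. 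The kernel condition $\Ker(\alpha)\subset\Ker(\beta)$ follows from exactness: $\Ker(\delta^{n-1})=\Ima(\delta^{n-2})$, and on $\Ima(\delta^{n-2})$ the inductive identity $\varphi^{n-1}\delta^{n-2}=\partial^{n-2}\varphi^{n-2}$ together with $\partial^{n-1}\partial^{n-2}=0$ annihilates $\beta$. Since $\alpha$ is admissible, $\Ker(\alpha)$ is complemented by Proposition \ref{proposition_complemented}, hence a genuine subbundle of $\Ker(\beta)$, as required. Proposition \ref{proposition_generalized_extension_problem} then returns $\varphi^n$ with $\lVert\varphi^n\rVert_\infty\le\lVert\partial^{n-1}\varphi^{n-1}\rVert_\infty<\infty$.

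For uniqueness I would take two extensions $\varphi^\bullet,\psi^\bullet$, put $\theta^\bullet:=\varphi^\bullet-\psi^\bullet$, and observe that $\theta^\bullet$ is a morphism of complexes extending the zero map, so in particular $\theta^0\circ\epsilon=0$. The chain homotopy $H^\bullet\colon\calE^\bullet\to\calF^{\bullet-1}$ (with $H^n=0$ for $n\le 0$) is again built inductively from Proposition \ref{proposition_generalized_extension_problem}, now with $\alpha:=\delta^n$ (admissible via $k^{n+1}$) and $\beta:=\theta^n-\partial^{n-1}H^n$, solving $H^{n+1}\circ\delta^n=\theta^n-\partial^{n-1}H^n$. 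The start reduces to $H^1\delta^0=\theta^0$, whose kernel condition uses $\Ker(\delta^0)=\Ima(\epsilon)$ and $\theta^0\epsilon=0$. In the inductive step the inclusion $\Ker(\delta^n)=\Ima(\delta^{n-1})\subset\Ker(\beta)$ is the crucial computation: on $\Ima(\delta^{n-1})$ one combines the fact that $\theta^\bullet$ commutes with the differentials, the previous homotopy identity $H^n\delta^{n-1}=\theta^{n-1}-\partial^{n-2}H^{n-1}$, and $\partial^{n-1}\partial^{n-2}=0$ to conclude that $\beta$ vanishes there.

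I expect the genuinely delicate points to be bookkeeping rather than conceptual: keeping the augmentation indices straight so that $k^0$ and $\epsilon$ behave correctly in the base cases, and treating each $\Ker$ as a subbundle — which is precisely what admissibility buys us via Proposition \ref{proposition_complemented}. Everything else is delivered for free by Proposition \ref{proposition_generalized_extension_problem}: the norm bounds, and the fact that the constructed maps are $\calG$-\emph{equivariant} even though the contracting homotopy $k^\bullet$ is only a (non-equivariant) morphism, since admissibility in that proposition is witnessed by an arbitrary morphism while the extension it produces is a $\calG$-morphism.
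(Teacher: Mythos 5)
Your proposal is correct and follows essentially the same route as the paper: induction on the degree, with the admissibility of $\delta^{\bullet}$ extracted from the contracting homotopy, the kernel inclusion checked via the homotopy identity (the paper derives $v=\delta^{\bullet-2}k^{\bullet-1}(v)$ directly from strongness where you invoke exactness, an equivalent step here), and the same reduction of uniqueness to showing an extension of $0$ is null-homotopic. The only cosmetic difference is that you invoke Proposition \ref{proposition_generalized_extension_problem} uniformly in both halves, whereas the paper unfolds the quotient-bundle construction explicitly in the existence step and only cites that proposition in the uniqueness step.
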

  \begin{proof}
  We start by constructing an extension $\varphi^{0}$ of $\varphi$, and then we proceed by induction on the degree.
  Since $(\calE^{\bullet},\delta^{\bullet})$ is strong, it admits a contracting homotopy,
  say $k^{\bullet}:\mathcal{E}^{\bullet}\to\mathcal{E}^{\bullet-1}$.
  For $\bullet=0$ we have the following extension problem
  \begin{center}
    \begin{tikzcd}
  \calE \arrow{rr}{\varepsilon}\arrow[swap]{rd}{\partial \circ\varphi} && \mathcal{E}^0\arrow[bend right =30,swap]{ll}{k^0}\arrow[dotted]{dl}{?}\\
  & \mathcal{F}^0\,.&
    \end{tikzcd}    
  \end{center}
  By relative injectivity of $\mathcal{F}^0$ there exists a map 
  $$\varphi^0 :\mathcal{E}^0\to\mathcal{F}^0$$
  such that 
  $\varphi^0\circ \varepsilon= \partial\circ \varphi $.
  
  Assume now that $\varphi^{\bullet-1}$ is constructed.
  We have a diagram 
  \begin{center}
    \begin{tikzcd}
      \mathcal{E}^{\bullet-2}\arrow{rr}{\delta^{\bullet-2}} &&   \calE^{\bullet-1} \\
    & &   \calE^{\bullet-1}\arrow[equal]{u}{\id} \arrow[]{llu}{k^{\bullet-1}}\arrow{rr}{\delta^{\bullet-1}}\arrow[swap]{rrd}{\partial^{\bullet-1}\circ\varphi^{\bullet-1}} & & \mathcal{E}^{\bullet} \arrow[swap]{llu}{k^{\bullet}}\\
  && && \mathcal{F}^{\bullet}\,,
    \end{tikzcd}    
  \end{center}
  where now 
  $k^{\bullet} \circ \delta^{\bullet-1} + \delta^{\bullet-2} \circ k^{\bullet-1} =  \id_{\mathcal{E}^{\bullet-1} } $. Let $v\in \Ker(\delta ^{\bullet-1}_x)$. Then
  $$v=k^{\bullet}_x \circ \delta_x^{\bullet-1}  (v)+ \delta_x^{\bullet-2} \circ k_x^{\bullet-1}  (v) = \delta_x^{\bullet-2} \circ k_x^{\bullet-1}  (v)$$
  hence 
  $$\partial_x ^{\bullet-1}\circ\varphi_x^{\bullet-1} (v)= \partial_x ^{\bullet-1}\circ\varphi_x^{\bullet-1}  \circ 
  \delta_x^{\bullet-2} \circ k_x^{\bullet-1}  (v)=
  \partial_x ^{\bullet-1} \circ \partial_x^{\bullet-2}  \circ \varphi_x ^{\bullet-2}\circ k_x^{\bullet-1}(v) = 0\,,$$
  that is $v\in \Ker (\partial_x ^{\bullet-1}\circ\varphi_x^{\bullet-1} )$.
  The inclusion $\Ker(\delta^{\bullet-1}) \subset \Ker(\partial^{\bullet-1}\circ\varphi^{\bullet-1})$
  allows to exploit relative injectivity of $\mathcal{E}^\bullet$ as follows. 
  Consider the following diagram 
  \begin{center}
    \begin{tikzcd}
      \calE^{\bullet-1}\arrow{ddrr}[swap]{p^{\bullet-1}}\arrow{rrrrdd}{\delta^{\bullet-1}}\arrow[bend right=45]{rrdddd}[swap]{\partial^{\bullet-1}\circ\varphi^{\bullet-1}}&&&& \\
      &&&&\\
     && \overline{\calE^{\bullet-1}}\arrow{rr}[swap]{\overline{\delta^{\bullet-1}}}\arrow[swap]{dd}{\overline{\partial^{\bullet-1}\circ\varphi^{\bullet-1}}} & & \mathcal{E}^{\bullet}\arrow[bend right=20]{lllluu}[swap]{k^{\bullet}}\\
     &&&&\\
   &&\mathcal{F}^{\bullet}&& \,,
    \end{tikzcd}    
  \end{center}
  where 
  $\overline{\mathcal{E}^{\bullet-1}}$ is the quotient of the bundle $\mathcal{E}^{\bullet-1}$ by $\Ker(\delta^{\bullet-1})$ and $p^{\bullet-1}:\mathcal{E}^{\bullet-1}\to\overline{\mathcal{E}^{\bullet-1}}$ is the projection to the quotient. The map $\overline{\delta^{\bullet-1}}$, respectively $\overline{\partial^{\bullet-1}\circ\varphi^{\bullet-1}}$, is the unique morphism making the middle, respectively lower, triangle commutative. 
  Thanks to the equality
  $$\delta^{\bullet-1}\circ k^{\bullet}\circ \delta^{\bullet-1}=\delta^{\bullet-1}\,,$$ the injective morphism $\overline{\delta^{\bullet-1}}$ is admissible. In fact, since $p^{\bullet-1}$ is surjective, we have the following implication
  $$\overline{\delta^{\bullet-1}}\circ p^{\bullet-1}\circ k^{\bullet}\circ \overline{\delta^{\bullet-1}}\circ p^{\bullet-1}=\overline{\delta^{\bullet-1}}\circ p^{\bullet-1} \;\Rightarrow \;\overline{\delta^{\bullet-1}}\circ (p^{\bullet-1}\circ k^{\bullet})\circ \overline{\delta^{\bullet-1}}= \overline{\delta^{\bullet-1}}\,.$$
  In particular, by the injectivity of $\overline{\delta^{\bullet-1}}$, we deduce that $ (p^{\bullet-1}\circ k^{\bullet}) \circ \overline{\delta^{\bullet-1}}=\id_{\overline{\mathcal{E}^{\bullet-1}}}$.
  Since $\mathcal{F}^{\bullet}$ is relatively injective, there must exist a $\mathcal{G}$-morphism $\varphi^{\bullet}:\mathcal{E}^{\bullet}\to \mathcal{F}^{\bullet}$ such that 
  $\lVert \varphi^{\bullet} \rVert_{\infty}\leq \rVert \overline{\partial^{\bullet-1}\circ\varphi^{\bullet-1}}\rVert_{\infty}$
  satisfying 
  $$\varphi^{\bullet}\circ \overline{\delta^{\bullet-1}}=\overline{\partial^{\bullet-1}\circ\varphi^{\bullet-1}}\,.$$ 
Thus, by commutativity of the diagram, it holds that
  $$\varphi^{\bullet}\circ \delta^{\bullet-1}=\partial^{\bullet-1}\circ\varphi^{\bullet-1}\,,$$
  and
 $$\lVert \varphi^{\bullet}\rVert_{\infty}\leq \lVert  \partial^{\bullet-1}\circ\varphi^{\bullet-1}\rVert_{\infty}.$$
This concludes the proof about the existence of an extension. 

For the uniqueness, it is sufficient to prove that any extension $\varphi^{\bullet}:\mathcal{E}^{\bullet}\to \mathcal{F}^{\bullet}$ of the trivial morphism $0:\mathcal{E}\to \mathcal{F}$ is homotopic to the trivial extension $0^{\bullet}:\mathcal{E}^{\bullet}\to \mathcal{F}^{\bullet}$. To this end, we set 
$h^{-1},h^0$ to be zero.

Assuming that $h^{\bullet-1}$ has been constructed, we look for $h^\bullet$. We consider the following diagram
\begin{center}
    \begin{tikzcd}
  \calE^{\bullet-1} \arrow{rr}{\delta^{\bullet-1}}\arrow[swap]{rd}{\varphi^{\bullet-1}-\partial^{\bullet-2} \circ h^{\bullet-1}} && \mathcal{E}^\bullet \arrow[dotted]{dl}{?}\\
  & \mathcal{F}^{\bullet-1}\,.&
    \end{tikzcd}    
  \end{center}
This is a generalized extension problem (notice that $\delta^{\bullet-1}$ is admissible because the complex is strong).  We need to check that $\mathrm{Ker}(\delta^{\bullet-1})$ is contained in the subbundle $\mathrm{Ker}(\varphi^{\bullet-1}-\partial^{\bullet-2} \circ h^{\bullet-1})$. Given $v \in \mathrm{Ker}(\delta^{\bullet-1}_x)$, by the inductive hypothesis it holds that
\begin{align*}
\partial^{\bullet-2}_x \circ h^{\bullet-1}_x \circ \delta^{\bullet-2}_x \circ k^{\bullet-1}_x (v) &=\partial^{\bullet-2}_x \circ (\varphi_x^{\bullet-2}-\partial^{\bullet-3}_x \circ h^{\bullet-2}_x) \circ k^{\bullet-1}_x(v)\\
&=\partial^{\bullet-2}_x \circ \varphi^{\bullet-2}_x \circ k^{\bullet-1}_x(v). 
\end{align*} 
We have already seen that $v=\delta^{\bullet-2}_x \circ k^{\bullet-1}_x(v)$. As a consequence
\begin{align*}
(\varphi^{\bullet-1}_x-\partial^{\bullet-2}_x \circ h^{\bullet-1}_x)(v)&=(\varphi^{\bullet-1}_x-\partial^{\bullet-2}_x \circ h^{\bullet-1}_x)(\delta^{\bullet-2}_x \circ k^{\bullet-1}_x)(v)\\
&=(\partial_x^{\bullet-2} \circ \varphi^{\bullet-2}_x \circ k^{\bullet-1}_x)(v)-(\partial^{\bullet-2}_x \circ h^{\bullet-1}_x \circ \delta^{\bullet-2} \circ k^{\bullet-1}_x)(v)\\
&=0,
\end{align*}
which proves the desired inclusion. By the relative injectivity of $\mathcal{F}^{\bullet-1}$ we can conclude. 
\end{proof}

  \begin{cor}\label{corollary_unique_resolution}
  Let $\calG$ be a measured groupoid and $\calE$ be a measurable $\calG$-bundle.
  Then there exists a unique (up to  chain homotopy) strong resolution of $\mathcal{E}$ by relatively injective bundles.
  \end{cor}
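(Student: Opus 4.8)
The existence of a strong resolution of $\calE$ by relatively injective bundles is supplied by the standard resolution of essentially bounded sections of Example \ref{example_bundle_linf}, which is shown to be strong and relatively injective in Section \ref{section:bounded:cohomology}; hence the substance of the statement is \emph{uniqueness up to chain homotopy}, and this is a purely formal consequence of Lemma \ref{lemma_fundamental_lemma}.

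The plan is the following. Let $(\calE^\bullet,\delta^\bullet)$ and $(\calF^\bullet,\partial^\bullet)$ be two strong resolutions of $\calE$ by relatively injective bundles, with augmentations $\epsilon\colon\calE\to\calE^0$ and $\varepsilon\colon\calE\to\calF^0$. First I would feed the identity $\id_\calE\colon\calE\to\calE$ into Lemma \ref{lemma_fundamental_lemma}, reading $(\calE^\bullet,\delta^\bullet)$ as the strong resolution and $(\calF^\bullet,\partial^\bullet)$ as the resolution by relatively injective bundles; this yields a morphism of complexes $\alpha^\bullet\colon\calE^\bullet\to\calF^\bullet$ extending $\id_\calE$, unique up to chain homotopy. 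Exchanging the roles of the two resolutions -- which is legitimate precisely because each of them is \emph{simultaneously} strong and relatively injective -- produces in the same way a morphism $\beta^\bullet\colon\calF^\bullet\to\calE^\bullet$ extending $\id_\calE$.

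The key step is to identify the two composites with the identity. The composition $\beta^\bullet\circ\alpha^\bullet\colon\calE^\bullet\to\calE^\bullet$ is a morphism of complexes extending $\id_\calE\circ\id_\calE=\id_\calE$, and so is the identity chain map $\id^\bullet\colon\calE^\bullet\to\calE^\bullet$. Invoking the uniqueness clause of Lemma \ref{lemma_fundamental_lemma} with $(\calE^\bullet,\delta^\bullet)$ in the role of both the strong resolution and the target resolution by relatively injective bundles, I conclude that $\beta^\bullet\circ\alpha^\bullet$ is chain homotopic to $\id^\bullet$; symmetrically $\alpha^\bullet\circ\beta^\bullet$ is chain homotopic to the identity on $\calF^\bullet$. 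Thus $\alpha^\bullet$ and $\beta^\bullet$ are mutually inverse chain homotopy equivalences, and the two resolutions are chain homotopy equivalent.

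Two points deserve attention, and the second is the only genuine obstacle. First, the morphisms and homotopies delivered by Lemma \ref{lemma_fundamental_lemma} are $\calG$-morphisms and $\calG$-equivariant chain homotopies, so they restrict to the subcomplexes of almost $\calG$-invariant essentially bounded sections of Remark \ref{remark_induced_complex}; consequently the induced maps on $\Hm^\bullet(\Linf(X,\calE^\bullet)^\calG,d^\bullet)$ and $\Hm^\bullet(\Linf(X,\calF^\bullet)^\calG,d^\bullet)$ are mutually inverse, and the cohomology is independent of the chosen resolution (and canonical, since the extending chain map is unique up to homotopy). Second, one must check at each invocation that the hypotheses of Lemma \ref{lemma_fundamental_lemma} are genuinely met, i.e.\ that the first argument is presented as a \emph{strong} resolution and the second as a resolution by \emph{relatively injective} bundles; since both $\calE^\bullet$ and $\calF^\bullet$ carry both properties at once, every instance needed above -- including the self-comparison used for uniqueness -- is admissible, and the $\lVert\,\cdot\,\rVert_\infty$-bounds propagated by the Lemma keep all the morphisms bounded, so that the equivalence survives the passage to invariant sections.
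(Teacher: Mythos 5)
Your proposal is correct and follows exactly the paper's own argument: apply Lemma \ref{lemma_fundamental_lemma} twice with the identity morphism, exchanging the roles of the two resolutions, and then use the uniqueness clause to see that both composites are chain homotopic to the identity. The paper states this in one sentence; you have merely spelled out the same standard double-application-plus-uniqueness argument in full detail.
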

  \begin{proof}
    Given two such resolutions $\mathcal{E}^{\bullet}$ and $\mathcal{F}^{\bullet}$, it is sufficient to apply twice Lemma \ref{lemma_fundamental_lemma} exchanging the role of $\mathcal{E}^{\bullet}$ and $\mathcal{F}^{\bullet}$, and then exploit the uniqueness of the extension up to chain homotopy. 
  \end{proof}

  Corollary \ref{corollary_unique_resolution} together with Remark \ref{remark_induced_complex} implies the following
  \begin{cor}\label{corollary_cohomology}
    Let $\calG$ be a measured groupoid and $\calE$ be a measurable $\calG$-bundle.
    Let $(\mathcal{E}^{\bullet},\delta^{\bullet})$ and $(\mathcal{F}^{\bullet},\partial^{\bullet})$ be strong resolutions of $\mathcal{E}$ by relatively injective bundles.
    Then there exists a canonical isomorphism
    $$\Hm^k(\Linf(X,\mathcal{E}^{\bullet})^{\mathcal{G}})\cong \Hm^k(\Linf(X,\mathcal{F}^{\bullet})^{\mathcal{G}})$$
    for any $k\geq 0$.
    \end{cor}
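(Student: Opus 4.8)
The plan is to reduce the statement to the standard homological-algebra fact that chain-homotopic chain maps induce the same map on cohomology, applied to the complexes of essentially bounded $\mathcal{G}$-invariant sections. First I would invoke Lemma \ref{lemma_fundamental_lemma} for the identity morphism $\id:\mathcal{E}\to\mathcal{E}$. Since $(\mathcal{E}^{\bullet},\delta^{\bullet})$ is strong and $(\mathcal{F}^{\bullet},\partial^{\bullet})$ is a resolution by relatively injective bundles, the lemma produces a chain map $\varphi^{\bullet}:\mathcal{E}^{\bullet}\to\mathcal{F}^{\bullet}$ extending $\id$. Exchanging the roles of the two resolutions—which is legitimate precisely because both are simultaneously strong and by relatively injective bundles—yields a chain map $\psi^{\bullet}:\mathcal{F}^{\bullet}\to\mathcal{E}^{\bullet}$ extending $\id$ in the other direction. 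This is exactly the construction underlying Corollary \ref{corollary_unique_resolution}.

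Next I would analyse the compositions. Both $\psi^{\bullet}\circ\varphi^{\bullet}$ and the identity morphism $\id_{\mathcal{E}^{\bullet}}$ are extensions of $\id:\mathcal{E}\to\mathcal{E}$, so by the uniqueness clause of Lemma \ref{lemma_fundamental_lemma} they are chain homotopic; symmetrically $\varphi^{\bullet}\circ\psi^{\bullet}\simeq\id_{\mathcal{F}^{\bullet}}$. The key feature to record here is that these chain homotopies are families of $\mathcal{G}$-\emph{morphisms}, in the sense of the chain homotopy defined before Definition \ref{def strong}, and not merely of morphisms as in a contracting homotopy.

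The third step is to descend everything to the invariant subcomplexes via Remark \ref{remark_induced_complex}. Since $\varphi^{\bullet}$ and $\psi^{\bullet}$ are $\mathcal{G}$-morphisms, the induced maps $(\varphi^{\bullet})^{*}$ and $(\psi^{\bullet})^{*}$ restrict to chain maps between $(\Linf(X,\mathcal{E}^{\bullet})^{\mathcal{G}},d^{\bullet})$ and $(\Linf(X,\mathcal{F}^{\bullet})^{\mathcal{G}},d^{\bullet})$. Because the chain homotopies are themselves $\mathcal{G}$-morphisms, applying the functor $\Linf(X,-)^{\mathcal{G}}$ carries the bundle-level homotopy identities to the invariant section complexes, so that $(\psi^{\bullet})^{*}\circ(\varphi^{\bullet})^{*}\simeq\id$ and $(\varphi^{\bullet})^{*}\circ(\psi^{\bullet})^{*}\simeq\id$ there.

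Finally, since chain-homotopic chain maps agree on cohomology, the maps induced by $(\varphi^{\bullet})^{*}$ and $(\psi^{\bullet})^{*}$ on $\Hm^{\bullet}$ are mutually inverse, yielding the isomorphism. Canonicity follows because the isomorphism is induced by an extension of the identity, and any two such extensions are chain homotopic by Lemma \ref{lemma_fundamental_lemma}, hence induce the same map on cohomology. The one point that demands genuine care—and which I expect to be the main obstacle—is precisely that the chain homotopies obtained from the uniqueness statement are equivariant, so that they survive the passage to the $\mathcal{G}$-invariant subcomplexes; this is what distinguishes the present situation from the (non-equivariant) contracting homotopy datum and is exactly what makes the argument go through.
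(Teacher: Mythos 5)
Your proposal is correct and follows exactly the route the paper takes: apply Lemma \ref{lemma_fundamental_lemma} twice to the identity morphism to obtain mutually quasi-inverse $\mathcal{G}$-chain maps (this is Corollary \ref{corollary_unique_resolution}), then descend to the invariant subcomplexes via Remark \ref{remark_induced_complex}, using that the chain homotopies — unlike contracting homotopies — are $\mathcal{G}$-equivariant and hence preserve invariant sections. The paper leaves this argument implicit, stating only that the corollary follows from Corollary \ref{corollary_unique_resolution} together with Remark \ref{remark_induced_complex}, so your write-up is a faithful and complete expansion of it.
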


\section{The bounded cohomology of measured groupoids}\label{section:bounded:cohomology}
The goal of this section is to give the definition of bounded cohomology of a $t$-discrete measured groupoid $\mathcal{G}$ with coefficients into a dual measurable bundle of Banach spaces. This is done via a characterization that follows the classic approach to bounded cohomology of groups due to Ivanov \cite{Ivanov} and to Monod \cite{monod:libro}. 
The results of the previous section shows that any two strong resolutions by relatively injective bundles of a given $\mathcal{G}$-bundle $\mathcal{E}$ share the same cohomology (Corollary \ref{corollary_cohomology}). 
% \marginpar{Sia qui che in precedenza, ho scelto un nuovo modo per indicizzare i fibrati, per avere qualcosa di pi\'u coerente con i paper gi\'a scritti} 
Among all such resolutions, the natural candidate to define the bounded cohomology of $\mathcal{G}$ with coefficients in $\mathcal{E}$ is induced by the complex
$$(\Linf(X,\mathcal{L}(\mathcal{G}^{(\bullet+1)},\mathcal{E})),d^{\bullet+1})\,,\;\;\; 
\mathcal{L}(\mathcal{G}^{(\bullet+1)},\mathcal{E}): x\to \Linfw((\mathcal{G}^{(\bullet+1)},\nu_x^{(\bullet+1)}),E_x)$$
defined in Example \ref{example_bundle_linf}. Unfortunately, we are able to prove the relative injectivity of those bundles only when the groupoid is $t$-discrete.
For this reason, in what follows we restrict to $t$-discrete groupoids.

Let $(\mathcal{G},\nu)$ be a $t$-discrete groupoid with unit space $(X,\mu)$, where $\nu$ disintegrates with respect to the target map $t:\mathcal{G}\to X$ as 
  $$\nu=\int_X\nu^xd\mu(x)\,.$$
  For every $x\in X$, the measure $\nu^x$ is a probability measure equivalent to the counting measure on the fiber $t^{-1}(x)$. 
  We consider a $\mathcal{G}$-space $(S,\tau)$ where $\tau$ disintegrates as 
    $$\tau=\int_X\tau^xd\mu(x)\,,$$
    with respect to the map $t_{S}:S\to X$.
Let $\mathcal{E}:x\to E_x$ be a measurable $\mathcal{G}$-bundle over $X$ with left action $L$ which is the dual of a separable $\mathcal{G}$-bundle $\mathcal{F}$. 
We denote by $\mathcal{F}^{\bullet}\coloneqq (t^{\bullet}_S)^*\mathcal{F}$, by $\mathcal{E}^{\bullet}$ its dual and by
$\mathcal{L}(S^{(\bullet+1)},\mathcal{E})$ the $\mathcal{G}$-bundle over $X$ whose fiber is $\Linfw((S^{(\bullet+1)},\tau_x^{(\bullet+1)}),E_x)$, for $x\in X$.
Here the $\mathcal{G}$-action works as follows
\begin{equation}\label{equation_action}
   \overline{L}(g) \lambda (s_0,\ldots, s_{\bullet})= L(g) \lambda(g^{-1}s_0,\ldots,g^{-1}s_{\bullet})
\end{equation}
for every $g\in \mathcal{G}$, $\lambda\in\Linfw((S^{(\bullet+1)},\tau_{s(g)}^{(\bullet+1)}),E_{s(g)})$. 
Similarly, we define the predual bundle 
$\mathcal{L}^{\flat}(S^{(\bullet+1)}, \mathcal{F})$.
The same arguments used to show the isomorphism of Theorem \ref{proposition_disintegration_isomorphism} can be applied to prove the $\mathcal{G}$-equivariant canonical isometric isomorphism
\begin{equation}\label{equation_disintegration_lone_S}
  \Lone(S^{(\bullet+1)},\mathcal{F}^{\bullet+1})\cong \Lone(X,\mathcal{L}^{\flat}(S^{(\bullet+1)},\mathcal{F}))\,.
\end{equation}
whose dual version is
\begin{equation}\label{equation_disintegration_linf_S}
  \Linf(S^{(\bullet+1)},\mathcal{E}^{\bullet+1})\cong \Linf(X,\mathcal{L}(S^{(\bullet+1)},\mathcal{E}))\,
\end{equation}

We introduce the coboundary operator 
$$d^{\bullet}: \mathcal{L}(S^{(\bullet+1)},\mathcal{E}) \rightarrow \mathcal{L}(S^{(\bullet+2)},\mathcal{E})\,,\;\;\; d^{\bullet}=\sum\limits_{i=0}^{\bullet+1} (-1)^i \delta_i^{\bullet}$$
where 
\begin{gather}\label{equation_delta}
  \delta_i^{\bullet}: \mathcal{L}(S^{(\bullet+1)},\mathcal{E})\to \mathcal{L}(S^{(\bullet+2)},\mathcal{E}) \\ \notag(\delta_i^{\bullet})_x f (s_0,\ldots,s_{\bullet+1})=f(s_0,\ldots,\widehat{s_{i}},\ldots,s_{\bullet+1})
\end{gather}
for every $f\in \Linfw((S^{(\bullet+1)},\tau_x^{(\bullet+1)}),E_x)$.

\begin{lemma}
 In the context described so far, the function $d^{\bullet}$ is a morphism of $\mathcal{G}$-bundles.
\end{lemma}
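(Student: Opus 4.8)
The plan is to reduce the statement to the individual face operators and then verify, for each of them, the two defining conditions of Definition \ref{definition_morphism_bundles} together with $\calG$-equivariance. Since the $\calG$-morphisms between two fixed measurable $\calG$-bundles form a linear space (sums of measurable sections are measurable and sums of bounded/equivariant families are again such), it suffices to prove that each
$$\delta_i^{\bullet}:\calL(S^{(\bullet+1)},\calE)\to\calL(S^{(\bullet+2)},\calE)$$
from \eqref{equation_delta} is a $\calG$-morphism; the alternating sum $d^{\bullet}=\sum_{i=0}^{\bullet+1}(-1)^i\delta_i^{\bullet}$ will then be one as well.

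First I would record the geometric meaning of $\delta_i^\bullet$. Let $p_i:S^{(\bullet+2)}\to S^{(\bullet+1)}$ be the Borel projection $p_i(s_0,\ldots,s_{\bullet+1})=(s_0,\ldots,\widehat{s_i},\ldots,s_{\bullet+1})$, which is fibre-preserving for $t_S$ and hence covers the identity on $X$. By definition $(\delta_i^{\bullet})_x f=f\circ p_i$. Because each fibre measure $\tau_x^{(\bullet+1)}$ is a product of copies of the probability measure $\tau^x$, marginalising one coordinate gives $(p_i)_\ast\tau_x^{(\bullet+2)}=\tau_x^{(\bullet+1)}$; therefore precomposition with $p_i$ is well defined on $\Linfw$-classes and is an isometry, so $\lVert\delta_i^{\bullet}\rVert_\infty\le 1$ and $\lVert d^{\bullet}\rVert_\infty\le \bullet+2$. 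This settles condition (ii).

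The main obstacle is condition (i), measurability, because the fibres carry the \emph{dual} (weak-$\ast$) measurable structure, which is not directly generated by an explicit family. Here I would pass through the disintegration isomorphism \eqref{equation_disintegration_linf_S}: an essentially bounded measurable section of $\calL(S^{(\bullet+1)},\calE)$ corresponds to a global section $\Sigma\in\Linf(S^{(\bullet+1)},\calE^{\bullet+1})$ via $\sigma(x)=\Sigma^x$. Under this identification $(\delta_i^{\bullet})_x\sigma(x)=\Sigma^x\circ p_i=(\Sigma\circ p_i)^x$, and $\Sigma\circ p_i$ is a measurable section of the pullback bundle $p_i^\ast\calE^{\bullet+1}$ by Example \ref{example_bundle_pull_back}; since $t_S^{\bullet+1}\circ p_i=t_S^{\bullet+2}$ one has $p_i^\ast\calE^{\bullet+1}=\calE^{\bullet+2}$, and $\Sigma\circ p_i$ is essentially bounded by the measure identity above. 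Applying \eqref{equation_disintegration_linf_S} in the reverse direction then exhibits $x\mapsto(\delta_i^{\bullet})_x\sigma(x)$ as a measurable section of $\calL(S^{(\bullet+2)},\calE)$. By Remark \ref{oss morphism dense subset} it is enough to verify condition (i) on the generating family of sections arising this way, so this proves that $\delta_i^\bullet$ is a morphism of measurable bundles. The delicate point to be checked carefully is precisely the compatibility of the weak-$\ast$ measurable structures under the two disintegration isomorphisms; equivalently, one may observe that $(\delta_i^{\bullet})_x$ is the adjoint of the marginalisation map on the predual $\Lone$-bundle, where the generating family $\calQ$ of \eqref{eq family L1} is available, and then dualise.

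Finally, for $\calG$-equivariance I would compute directly from the action \eqref{equation_action}. For $g\in\calG$ and $\lambda\in\Linfw((S^{(\bullet+1)},\tau_{s(g)}^{(\bullet+1)}),E_{s(g)})$, both $(\delta_i^{\bullet})_{t(g)}(\overline{L}(g)\lambda)$ and $\overline{L}(g)\big((\delta_i^{\bullet})_{s(g)}\lambda\big)$ evaluate at $(s_0,\ldots,s_{\bullet+1})$ to $L(g)\lambda(g^{-1}s_0,\ldots,\widehat{g^{-1}s_i},\ldots,g^{-1}s_{\bullet+1})$, because deleting the $i$-th coordinate commutes with the diagonal action $s_j\mapsto g^{-1}s_j$ and $L(g)$ is pulled out in front in either order. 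Hence $(\delta_i^{\bullet})_{t(g)}\circ\overline{L}(g)=\overline{L}(g)\circ(\delta_i^{\bullet})_{s(g)}$ for almost every $g$, and by linearity the same holds for $d^{\bullet}$. Combining the three steps shows that $d^{\bullet}$ is a $\calG$-morphism.
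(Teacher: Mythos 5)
Your proposal is correct, and its skeleton (reduce to the face operators $\delta_i^{\bullet}$ by linearity, bound the norm, verify equivariance by a direct computation) coincides with the paper's. The one step that carries real content --- measurability, i.e.\ condition (i) of Definition \ref{definition_morphism_bundles} --- is where you diverge. The paper argues entirely on the weak-$\ast$ side by duality: it pairs $(\delta_i^{\bullet})_x\sigma(x)$ against a measurable section $\eta$ of the predual bundle $\mathcal{L}^{\flat}(S^{(\bullet+2)},\mathcal{F})$, commutes the deleted variable past the pairing as a Bochner integral, and observes that the marginalised predual section $(s_0,\ldots,\widehat{s_i},\ldots,s_{\bullet+1})\mapsto \int^B_S\eta(x)(s_0,\ldots,s_{\bullet+1})\,d\tau^x(s_i)$ is a measurable section of $\mathcal{L}^{\flat}(S^{(\bullet+1)},\mathcal{F})$; measurability of the pairing then follows from the definition of the dual measurable structure. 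You instead transport the section through the disintegration isomorphism \eqref{equation_disintegration_linf_S}, recognise $\delta_i^{\bullet}$ as precomposition with the fibre-preserving Borel projection $p_i$, and pull back. Your route is more geometric, but it shifts the burden onto verifying that the two disintegration isomorphisms intertwine $p_i^{\ast}$ with $\delta_i^{\bullet}$ compatibly with the weak-$\ast$ measurable structures --- which you correctly flag as the delicate point, and your proposed resolution (view $(\delta_i^{\bullet})_x$ as the adjoint of marginalisation on the $\mathrm{L}^1$-predual and dualise) is precisely the paper's Bochner-integral argument. So the two proofs are dual formulations of the same idea; the paper's has the advantage of never leaving the predual, where an explicit generating family \eqref{eq family L1} is available, while yours makes the simplicial structure of the coboundary more visible. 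As a small point in your favour, your norm bound $\lVert\delta_i^{\bullet}\rVert_\infty\le 1$, hence $\lVert d^{\bullet}\rVert_\infty\le\bullet+2$, is sharper and more accurate than the estimate stated in the paper.
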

\begin{proof}
We show that each $\delta_i^{\bullet}$ sends measurable sections of $\mathcal{L}(S^{(\bullet+1)},\mathcal{E})$ to measurable sections of $\mathcal{L}(S^{(\bullet+2)},\mathcal{E})$. We pick any section $\sigma$ of $\mathcal{L}(S^{(\bullet+1)},\mathcal{E})$ and a measurable section 
$\eta$ of $\mathcal{L}^{\flat}(S^{(\bullet+2)},\mathcal{F})$ and we prove that
$$x\mapsto \langle(\delta_i^{\bullet})_x\sigma(x) , \eta(x)\rangle$$ is $\mu$-measurable. 

 It holds that 
\begin{align*}
  &\langle\delta_i^{\bullet}\sigma(x),\eta(x)\rangle\\
=& \int_{S^{(\bullet+2)}}
   \langle\sigma(x)(s_0,\ldots,\widehat{s}_i, \ldots, s_{\bullet+1})\,,\, \eta(x)(s_0,\ldots, s_{\bullet+1})\rangle
  d\tau^{\bullet+2}_x (s_0,\ldots, s_{\bullet+1})\,\\
  =& \int_{S}\cdots \int_{S}
  \langle\sigma(x)(s_0,\ldots,\widehat{s}_i, 
   \ldots, s_{\bullet+1})\, , \int_S^B \eta(x)(s_0,\ldots , s_{\bullet+1}) d\tau^x(s_i) \rangle
 d\tau^x (s_0)\cdots d\tau^x(s_{\bullet+1})\,,
\end{align*}
where $\int^B_S$ denotes the Bochner integral. In the above equation we applied the fact that the Bochner integral commutes with linear operators \cite[Section 3.1]{monod:libro}. 
We notice that the map 
$$(s_0,\ldots,s_{i-1},s_{i+1},\ldots,s_{\bullet+1})\mapsto 
\int_S^B \eta(x)(s_0,\ldots , s_{\bullet+1}) d\tau^x(s_i)$$
is a measurable section of $\mathcal{L}^{\flat}(S^{(\bullet+1)},\mathcal{F})$. Indeed, the Bochner integral is fiberwise continuous and measurability follows by Remark \ref{oss morphism dense subset} applied to the generating family defined in the proof of Theorem \ref{proposition_disintegration_isomorphism}. The duality between $\mathcal{L}^{\flat}(S^{(\bullet+1)},\mathcal{F})$ and $\mathcal{L}(S^{(\bullet+1)},\mathcal{E})$ implies that the map $x\mapsto \langle\delta_i^{\bullet}\sigma(x),\eta(x)\rangle$ is $\mu$-measurable, as claimed. 

% If we set $$\sigma_i(x): (S^x)^{\bullet+1}\to E_x
% \,,\;\;\;(s_0,\ldots, s_{\bullet+1})\mapsto \sigma(x)(s_0,\ldots,\widehat{s}_i, \ldots, s_{\bullet+1})$$
% We claim that $x\mapsto(\delta_i^{\bullet})_x
% (\sigma(x))$ is a section of $\mathcal{L}(\mathcal{S}^{\bullet+1},\mathcal{E})$. If so, the measurability of the above map follows by the duality between 
% $\mathcal{L}(S, \mathcal{E}^{\flat})^{\bullet+1}$ and $\mathcal{L}(S^{\bullet+1},\mathcal{E})$. 
% To prove the claim we exploit the isomorphism of Equation \eqref{equation_disintegration_linf_S}. Indeed, $\sigma$ can be seen as an element in $\Linf(S^{(\bullet+1)},\mathcal{E})$ and $\delta^{\bullet}_i$ fits in the following diagram
% \begin{center}
%   \begin{tikzcd}
%     \Linf(X, \mathcal{L}(\mathcal{S}^{\bullet+1},\mathcal{E}))\arrow{r}{\delta_i^{\bullet}} \arrow[shift right]{d} & \Linf(X, \mathcal{L}(\mathcal{S},\mathcal{E})^{\bullet+2})\arrow[shift right]{d}\\
%     \Linf(S^{(\bullet+1)},\mathcal{E}^{\bullet+1})\arrow[shift right]{u}\arrow{r}{\overline{\delta}_i^{\bullet}}& \Linf(S^{(\bullet+2)},\mathcal{E}^{\bullet+2})\arrow[shift right]{u}
%   \end{tikzcd}\end{center}
%   where $\overline{\delta}_i^{\bullet}$ is defined as in Equation \eqref{equation_delta}.

Furthermore, $d^{\bullet}$ is bounded since each $\delta^{\bullet}_i$ has norm at most $\bullet+1$ by definition. Finally, a straightforward computation show that each $\delta^{\bullet}_i$ is also $\mathcal{G}$-equivariant, whence $d^{\bullet}$ is a morphism of $\mathcal{G}$-bundles. 
\end{proof}

\begin{rec_defn}[\ref{def bounded cohomology}]
  The \emph{bounded cohomology} $\Hmb^{\bullet}(\mathcal{G},\mathcal{E})$ of a $t$-discrete measured groupoid $\mathcal{G}$ with coefficients in a dual measurable $\mathcal{G}$-bundle $\mathcal{E}$
 is the cohomology of the complex $(\Linf(X, \mathcal{L}(\mathcal{G}^{(\bullet+1)},\mathcal{E}))^{\mathcal{G}}, d^{\bullet})$, namely
$$\Hmb^{k}(\mathcal{G},\mathcal{E})\coloneqq \Hm^k(\Linf(X, \mathcal{L}(\mathcal{G}^{(\bullet+1)},\mathcal{E}))^{\mathcal{G}}, d^{\bullet})\,.$$
\end{rec_defn}

\begin{oss}
  The above definition extends the one of \emph{measurable bounded cohomology} of a measured groupoid introduced in \cite{sarti:savini:23}. Indeed, if $\mathcal{E}$ is the constant bundle, namely each fiber coincides with a fixed Banach space $E$, essentially bounded sections of $\mathcal{E}^{\bullet}$ are precisely $E$-valued essentially bounded functions on $\mathcal{G}^{(\bullet+1)}$. Thus, thanks to the isomorphism of Theorem \ref{proposition_disintegration_isomorphism}, we have 
  $$\Linf(X, \mathcal{L}(\mathcal{G}^{(\bullet+1)},\mathcal{E}))\cong \Linfw(\mathcal{G}^{(\bullet+1)},E)\,.$$
and these modules are the ones used in \cite{sarti:savini:23} to define the functor $\mathrm{H}^\bullet_{\text{mb}}$ with constant coefficients. 
\end{oss} 

\begin{oss}
The bounded cohomology of a measured groupoid with coefficients into the dual of a separable bundle was already mentioned by Delaroche and Renault \cite[Section 4.3]{delaroche:renault}. The authors consider \emph{bounded inhomogeneous 1-cocycles} and \emph{1-coboundaries} in order to characterize the amenability of semidirect groupoids in terms of the vanishing of the resulting 1-cohomology. As it happens for groups \cite{miolibro,monod:libro}, we believe that their definition via inhomogeneous cochains coincides with our homogeneous construction. 
\end{oss}

In analogy with bounded cohomology of groups \cite[Proposition 7.4.1]{monod:libro}, when the coefficient bundle is relatively injective the bounded cohomology vanishes. 
\begin{prop}
  Let $\calG$ be a $t$-discrete measured groupoid and $\mathcal{E}$ a measurable $\calG$-bundle that is the dual of a separable measurable $\calG$-bundle.
  If $\mathcal{E}$ is relatively injective, then 
  $$ \Hmb^{k}(\calG,\mathcal{E})=0\,$$
  for every $k\geq 1$. 
\end{prop}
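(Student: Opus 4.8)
The plan is to reduce the statement to the uniqueness of strong resolutions by relatively injective bundles, exactly as in Monod's proof of \cite[Proposition 7.4.1]{monod:libro}. Recall that, for a $t$-discrete groupoid, the standard complex $\mathcal{L}(\mathcal{G}^{(\bullet+1)},\mathcal{E})$ augmented by $\epsilon:\mathcal{E}\to \mathcal{L}(\mathcal{G},\mathcal{E})$ is a strong resolution of $\mathcal{E}$ by relatively injective bundles; this is precisely the fact that makes Definition \ref{def bounded cohomology} meaningful through Corollary \ref{corollary_cohomology}, and it is the only place where $t$-discreteness is used. Consequently $\Hmb^{\bullet}(\calG,\mathcal{E})$ may be computed from \emph{any} strong resolution of $\mathcal{E}$ by relatively injective bundles, and the strategy is simply to exhibit a trivial such resolution whose associated complex of invariant sections is concentrated in degree zero.

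First I would set up the trivial resolution. Since $\mathcal{E}$ is relatively injective by hypothesis, consider the augmented complex $0\to\mathcal{E}\xrightarrow{\id}\mathcal{E}\to 0\to 0\to\cdots$, placing $\mathcal{E}$ in degree $0$ (with augmentation $\varepsilon=\id$) and the zero bundle in every positive degree. This is a resolution of $\mathcal{E}$ by relatively injective bundles: the degree-zero bundle is $\mathcal{E}$ itself, and the zero bundle is trivially relatively injective. It is exact, because $\partial^0=0$ maps into the zero bundle and hence $\Ker(\partial^0)=\mathcal{E}^0=\mathcal{E}=\Ima(\varepsilon)$, while all higher degrees vanish. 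Finally it is strong in the sense of Definition \ref{def strong}: taking the contracting homotopy to be $\id_{\mathcal{E}}$ from the degree-zero term onto the augmentation term and the zero map in all other degrees yields (non-equivariant) morphisms of norm at most one satisfying the required identity. The one point that deserves a line of verification is that the zero bundle really is relatively injective, which is immediate since the only morphism with values in it is the zero morphism.

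Then I would invoke Corollary \ref{corollary_cohomology}, taking $(\mathcal{E}^{\bullet},\delta^{\bullet})$ to be the standard resolution $\mathcal{L}(\mathcal{G}^{(\bullet+1)},\mathcal{E})$ and $(\mathcal{F}^{\bullet},\partial^{\bullet})$ to be the trivial resolution above, to obtain a canonical isomorphism $\Hmb^{k}(\calG,\mathcal{E})\cong \Hm^k(\Linf(X,\mathcal{F}^{\bullet})^{\mathcal{G}})$ for every $k\geq 0$. The complex $\Linf(X,\mathcal{F}^{\bullet})^{\calG}$ is $\Linf(X,\mathcal{E})^{\calG}$ in degree $0$ and zero afterwards, so its cohomology vanishes in all degrees $k\geq 1$, giving the claim. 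The only substantive ingredient is the relative injectivity and strength of the standard resolution for $t$-discrete $\calG$; granted this (which is built into the definition of $\Hmb^{\bullet}$), the vanishing is a purely formal consequence of the Fundamental Lemma, and I expect no further obstacle.
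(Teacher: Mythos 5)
Your proposal is correct and follows essentially the same route as the paper: both replace the standard resolution by the trivial strong resolution $0\to\mathcal{E}\xrightarrow{\id}\mathcal{E}\to 0\to\cdots$ of the relatively injective bundle $\mathcal{E}$, with contracting homotopy $\id$ in degree zero and $0$ elsewhere, and then invoke Corollary \ref{corollary_cohomology}. Your explicit acknowledgement that this also requires the standard complex $\mathcal{L}(\mathcal{G}^{(\bullet+1)},\mathcal{E})$ to be a strong resolution by relatively injective bundles is accurate and is exactly the content the paper supplies in Propositions \ref{proposition_strong} and \ref{proposition_relative_injective1}.
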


\begin{proof}
  Consider the following complex
  \begin{equation}\label{equation resolution}
    \ldots 0\longrightarrow \mathcal{E}\longrightarrow \mathcal{E}\longrightarrow 0\longrightarrow \ldots  
  \end{equation}
  which is an augmented resolution of $\mathcal{E}$ by relatively injective modules. We claim that it is strong. If this is true, the conclusion follows by Corollary \ref{corollary_cohomology}. Indeed, the bounded cohomology of $\mathcal{G}$ coincides with the one of the complex 
  $$0\longrightarrow \Linf(X,\mathcal{E})^{\mathcal{G}}\longrightarrow 0\longrightarrow 0\longrightarrow \ldots\,,$$ which is zero in all positive degrees. 
  To prove the claim, we construct the following contracting homotopy for the resolution of Equation \eqref{equation resolution}
  $$k^\bullet =\begin{cases}
    0 & \text{ if } \bullet\neq 0\\
    \Id & \text{ if } \bullet= 0
  \end{cases}\,.$$
  The fact that each $k^{\bullet}$ does not increase the norm is obvious. To verify that they are contracting homotopies is straightforward. 
\end{proof}

The goal of the remaining part of this section is to prove that, for amenable $\mathcal{G}$-spaces, the complex $(\Linf(X, \mathcal{L}(S^{(\bullet+1)},\mathcal{E}))^{\mathcal{G}}, d^{\bullet})$ forms a strong resolution by relatively injective bundles. This, together with the results of Section \ref{section:homological}, shows that one can compute the bounded cohomology via essentially bounded functions on any amenable space, generalizing a classic result by Burger and Monod \cite[Theorem 2]{burger2:articolo}.

First of all, we show that the complex $(\mathcal{L}(S^{(\bullet+1)},\mathcal{E}),d^{\bullet})$ with augmentation morphism the inclusion of constants $\mathcal{E}\hookrightarrow \mathcal{L}(S,\mathcal{E})$ provides a  resolution of $\mathcal{E}$ that admits a contracting homotopy. This is true for all measured groupoids (not necessarily discrete) and all $\mathcal{G}$-spaces (not necessarily amenable). 

\begin{prop}\label{proposition_strong}
  Let $\mathcal{G}$ be a measured groupoid, $(S,\tau)$ a $\mathcal{G}$-space and $\mathcal{E}$ a $\mathcal{G}$-bundle that is the dual of a separable $\mathcal{G}$-bundle $\mathcal{F}$.
  Then the resolution
  \begin{center}
    \begin{tikzcd}
      0 \arrow{r}& \mathcal{E}\arrow{r}{\epsilon}&\mathcal{L}(S,\mathcal{E})\arrow{r}{\delta^{0}}&\mathcal{L}(S^{(2)},\mathcal{E})\arrow{r}&\cdots
    \end{tikzcd}
  \end{center}
  where the augmentation $\calG$-morphism $\epsilon$ is defined as
  $$\epsilon :\mathcal{E}\to \mathcal{L}(S,\mathcal{E})\,,\;\;\; \epsilon_x(v)(s)\coloneqq v $$
  is strong. 
  \end{prop}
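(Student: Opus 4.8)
The plan is to write down an explicit contracting homotopy obtained by integrating out the first coordinate against the fibre measures $\tau^x$, which we take to be probability measures (normalising the disintegration $\tau=\int_X\tau^x\diff\mu(x)$). For each $n\geq 0$ and each $x\in X$ I define the fibrewise operator
$$k^n_x\colon \Linfw((S^{(n+1)},\tau_x^{(n+1)}),E_x)\to \Linfw((S^{(n)},\tau_x^{(n)}),E_x)\,,\quad (k^n_xf)(s_0,\ldots,s_{n-1})=\int_S f(s,s_0,\ldots,s_{n-1})\diff\tau^x(s)\,,$$
the integral being taken in the weak-$*$ sense in $E_x=F_x^\ast$, together with $k^0_xf=\int_S f(s)\diff\tau^x(s)\in E_x$. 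Since $\tau^x(S)=1$, pulling the norm inside the integral gives $\lVert k^n_xf\rVert\leq\lVert f\rVert$, so that $\lVert k^n\rVert_\infty\leq 1$ in every degree.

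The first and main technical point is to check that the family $(k^n_x)_{x\in X}$ is a morphism of measurable bundles, i.e. that it respects the (weak-$*$ dual) measurable structures; it need not be $\mathcal{G}$-equivariant, which is permitted by Definition \ref{def strong}. The cleanest route is to realise $k^n$ as the Banach adjoint of the pullback morphism
$$\iota^n\colon \mathcal{L}^{\flat}(S^{(n)},\mathcal{F})\to \mathcal{L}^{\flat}(S^{(n+1)},\mathcal{F})\,,\qquad (\iota^n_x\eta)(s_0,s_1,\ldots,s_n)=\eta(s_1,\ldots,s_n)\,.$$
Interchanging the duality pairing with the Bochner integral—exactly as in the lemma above showing that $d^{\bullet}$ is a morphism—gives $\langle k^n_x\sigma(x),\eta(x)\rangle=\langle\sigma(x),\iota^n_x\eta(x)\rangle$ for every measurable section $\sigma$ of $\mathcal{L}(S^{(n+1)},\mathcal{E})$ and every measurable section $\eta$ of $\mathcal{L}^{\flat}(S^{(n)},\mathcal{F})$. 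Now $\iota^n$ is manifestly a morphism of measurable bundles (a pullback along a coordinate projection) and is $\Lone$-isometric because $\tau^x$ is a probability measure; hence $\iota^n_x\eta(x)$ is again a measurable predual section, and by the definition of the dual measurable structure (Example \ref{example_dual_bundle}) the map $x\mapsto\langle k^n_x\sigma(x),\eta(x)\rangle$ is $\mu$-measurable. This is precisely condition (i) of Definition \ref{definition_morphism_bundles}, and boundedness is the norm estimate above.

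It then remains to verify the contracting-homotopy identity $\delta^{n-1}\circ k^n+k^{n+1}\circ\delta^n=\id$ fibrewise, which is the standard simplicial (cone) computation. Expanding $\delta^n=\sum_{i=0}^{n+1}(-1)^i\delta_i^n$ and substituting the integral formula, the $i=0$ summand of $k^{n+1}\circ\delta^n$ has integrand independent of the integration variable and so reproduces $\id$ (using $\tau^x(S)=1$), while the summands with $i\geq 1$ cancel against the terms of $\delta^{n-1}\circ k^n$ after the reindexing $j=i-1$. For the augmentation one checks directly that $k^0_x\circ\epsilon_x=\id_{E_x}$, since $\epsilon_x(v)$ is the constant section with value $v$ and $\tau^x$ is a probability, together with the degree-zero relation $\epsilon_x\circ k^0_x+k^1_x\circ\delta^0_x=\id$ by the same cancellation. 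These identities exhibit a contracting homotopy for the augmented complex, so it is strong. I expect the measurability in the second paragraph to be the only genuinely delicate step, and the adjoint reformulation via the pullback on the preduals is precisely what makes it routine.
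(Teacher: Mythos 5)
Your proposal is correct and follows essentially the same route as the paper: the contracting homotopy is the same Gelfand--Dunford integration over the first coordinate against the probability measures $\tau^x$, the norm bound comes from $\tau^x(S)=1$, and the homotopy identity is the standard fibrewise cone computation. Your reformulation of the measurability check via the adjoint of the predual pullback $\iota^n$ is only a repackaging of what the paper does when it extends $\eta$ to $\widehat{\eta}(s,s_0,\ldots,s_{\bullet-1})=\eta(s_0,\ldots,s_{\bullet-1})$ and moves the pairing inside the integral.
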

  
  \begin{proof}
    Given any section $\lambda$ of $\mathcal{L}(S^{(\bullet+1)},\mathcal{E})$ we denote by $\lambda^x\coloneqq \lambda(x)$.
    We set
    $$
    k^0:\mathcal{L}(S,\mathcal{E})\to \mathcal{E}\,,\;\;\;
    k_x^0 (\lambda^x )\coloneqq \int^{(GD)}_{S} \lambda^x(s) d\tau^x(s)\,
    $$
 and in higher degree we set
$$
   k^{\bullet}:\mathcal{L}(S^{(\bullet+1)},\mathcal{E})\to \mathcal{L}(S^{(\bullet)},\mathcal{E})\,,\;\;\;
$$
$$
      k_x^{\bullet}( \lambda^x) (s_0,\ldots,s_{\bullet-1})\coloneqq \int^{(GD)}_{S} \lambda^x(s,s_0,\ldots,s_{\bullet-1}) d\tau^x(s)\,
$$
  where $\int^{(GD)}$ is the Gelfand-Dunford integral \cite[Section 3.2]{monod:libro}. 
  % By definition, $k_x^1$ is a left inverse for the inclusion $\epsilon_x :E\hookrightarrow \Linfw((\calG,\nu^x),E)$.
 To see that $k^0$ is a morphism in the sense of Definition \ref{definition_morphism_bundles}, we take a measurable section $x\mapsto\lambda^x$ of the bundle $\mathcal{L}(S,\mathcal{E})$ and we show that the map $x\mapsto \kappa_x^0(\lambda^x)$ is a measurable section of $\mathcal{E}$. Since $\mathcal{E}$ is the dual of $\mathcal{F}$, we need to show that for any measurable section $\eta$ of $\mathcal{F}$ the map 
  $$x\mapsto \langle k_x^0 (\lambda^x), \eta(x) \rangle =\langle \int^{(GD)}_{S} \lambda^x(s) d\tau^x(s), \eta(x) \rangle$$ is 
  $\mu$-measurable, where $\langle \cdot,\cdot\rangle$ is the duality pairing between $E_x$ and $F_x$. This is equivalent to show the $\mu$-measurability of 
\begin{equation}\label{eq Gelfand pairing}
x\mapsto \int_{S} \langle\lambda^x(s), \eta(x) \rangle d\tau^x(s)\,.
\end{equation}
  Since essentially bounded sections are dense, by Remark \ref{oss morphism dense subset} we can suppose that $\lambda$ is essentially bounded. Thus we can think of $\lambda$ as a section on $S$ and we can rewrite Equation \eqref{eq Gelfand pairing} as follows
$$
x \mapsto \int_S \langle \lambda(s),\eta(t_S(s)) \rangle d\tau^x(s)\,. 
$$
The measurability follows again by the disintegration properties of the family $\{\tau^x\}_{x \in X}$. 
  
In higher degree, we can consider two sections $\lambda\in \Linf(X,\mathcal{L}(S^{(\bullet+1)},\mathcal{E}))$ and $\eta\in \Lone(X,\mathcal{L}^{\flat}(S^{(\bullet)},\mathcal{F}))$. We need to show that 
  $$x\mapsto \langle k_x^{\bullet+1} (\lambda^x), \eta^x \rangle =\langle \int^{(GD)}_{S} \lambda^x(s,s_0,\ldots,s_{\bullet-1}) d\tau^x(s)\,,\, \eta^x(s_0,\ldots,s_{\bullet-1}) \rangle\,$$
  is $\mu$-measurable.
  Here the pairing $\langle \cdot,\cdot\rangle$ on the left is the one between $\Lone((S^{(\bullet)},\tau^{\bullet}_x), F_x)$ with $\Linfw((S^{(\bullet)},\tau^{\bullet}_x),E_x)$, whereas the one on the right refers to the duality between $F_x$ and $E_x$. 
  By the isomorphism of Equation \eqref{equation_disintegration_lone_S}, we can think of both $\lambda$ and $\eta$ as sections defined over $S^{(\bullet+1)}$. In the case of $\eta$, we can actually extend it by the formula
  $$\widehat{\eta}(s,s_0,\ldots,s_{\bullet-1})\coloneqq\eta(s_0,\ldots,s_{\bullet-1})\,.$$
  % whose disintegration is denoted by $\widetilde{\eta}=\int \widetilde{\eta}^xd\mu(x)$.
As a consequence, we obtain the following computation
  \begin{align*}
    &\langle \int^{(GD)}_{S} \lambda^x(s,s_0,\ldots,s_{\bullet-1}) d\tau^x(s)\,,\, \eta^x(s_0,\ldots,s_{\bullet-1}) \rangle\\
  =\;& \int_{S} \langle \lambda(s,s_0,\ldots,s_{\bullet-1})\,,\,\widehat{\eta}(s,s_0,\ldots,s_{\bullet-1}) \rangle d\tau^x(s)\,.
  \end{align*}
For any fixed $(s_0,\ldots,s_{\bullet-1})$, the measurability of the map 
  $$s\mapsto \langle \lambda(s,s_0,\ldots,s_{\bullet-1})\ , \ \widehat{\eta}(s,s_0,\ldots,s_{\bullet-1}) \rangle$$
implies that   
$$x\mapsto \int_{S} \langle \lambda(s,s_0,\ldots,s_{\bullet-1})\,,\,\widehat{\eta}(s,s_0,\ldots,s_{\bullet-1}) \rangle d\tau^x(s)$$ is measurable, again by the usual disintegration properties of the family $\{\tau^x\}_{x \in X}$. The boundedness of $k^\bullet$ is straightforward, thus we can conclude that $k^{\bullet}$ is a morphism. 
  
Finally the equality
  $$\delta_x^{\bullet-1}\circ k_x^{\bullet}(\lambda^x) +k_x^{\bullet+1}\circ \delta_x^{\bullet}(\lambda^x) =\lambda^x$$ 
  holds for every $x\in X$.
  In fact, the standard algebraic argument used for groups can be similarly reproduced fiberwise in this context. This concludes the proof.
  \end{proof}

We move on to the proof that, whenever $\mathcal{G}$ is $t$-discrete and $S$ is $\mathcal{G}$-amenable, each bundle $\mathcal{L}(S^{\bullet},\mathcal{E})$ is relatively injective.
The strategy is the same used by Monod in the case of groups \cite{monod:libro}.
Indeed, we first prove relative injectivity of the above bundle when $S=\mathcal{G}$.

\begin{prop}\label{proposition_relative_injective1}
% \marginpar{Mi sembra che la definizione di beta sia sbagliata: rincontrolla e nel caso correggi la definizione e quindi la dimostrazione}
  Let $\mathcal{G}$ be a $t$-discrete measured groupoid and $\mathcal{E}$ be a separable measurable $\mathcal{G}$-bundle. Then the $\calG$-bundle $\mathcal{L}(\mathcal{G}^{(\bullet)},\mathcal{E}): x \mapsto \mathrm{L}^\infty_{\textup{w}^\ast}((\mathcal{G}^{(\bullet)},\nu_x^{(\bullet)}),E_x)$ is relatively injective.
\end{prop}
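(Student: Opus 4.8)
The plan is to realise $\mathcal{L}(\mathcal{G}^{(\bullet)},\mathcal{E})$ as a \emph{coinduced} bundle and to solve the extension problem of Definition \ref{definition_relatively_injective} through an adjunction, exactly as Monod does for the regular module in the group case. Fix a degree $n=\bullet\geq 1$ and abbreviate the bundle in question by $\mathcal{L}^{(n)}:x\mapsto \Linfw((\mathcal{G}^x)^n,E_x)$. The structural map driving everything is the \emph{evaluation at the unit in the first coordinate}
$$\mathrm{ev}:\mathcal{L}^{(n)}\to \mathcal{L}^{(n-1)}\,,\qquad \mathrm{ev}_x(\lambda)(g_2,\ldots,g_n):=\lambda(1_x,g_2,\ldots,g_n)\,,$$
where $\mathcal{L}^{(n-1)}:x\mapsto \Linfw((\mathcal{G}^x)^{n-1},E_x)$ and $\mathcal{L}^{(0)}=\mathcal{E}$. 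This is precisely where $t$-discreteness enters: since $\nu^x$ is equivalent to the counting measure on $\mathcal{G}^x$, the unit $1_x$ is an atom, so the slice $\{1_x\}\times(\mathcal{G}^x)^{n-1}$ carries positive $\nu_x^{(n)}$-mass; restriction to it is therefore well defined on $\nu_x^{(n)}$-classes and has operator norm at most one. For a non-discrete groupoid no such slice evaluation exists, which is exactly the reason for the standing hypothesis.

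Next I would establish the adjunction. For any measurable $\mathcal{G}$-bundle $\mathcal{A}$ I claim a natural, norm-preserving bijection between $\mathcal{G}$-morphisms $\mathcal{A}\to\mathcal{L}^{(n)}$ and arbitrary (non-equivariant) bundle morphisms $\mathcal{A}\to\mathcal{L}^{(n-1)}$. In one direction a $\mathcal{G}$-morphism $T$ is sent to $\mathrm{ev}\circ T$; the inverse sends a bundle morphism $S$ to the section
$$(\widehat{S}_x a)(g_1,\ldots,g_n):=L(g_1)\,S_{s(g_1)}\!\big(L_{\mathcal{A}}(g_1^{-1})a\big)(g_1^{-1}g_2,\ldots,g_1^{-1}g_n)\,.$$
A direct computation, using $L(g_1)L(g_1^{-1}g)=L(g)$ and $s(g^{-1}g_1)=s(g_1)$ together with the cocycle relations, shows that $\widehat{S}$ is $\mathcal{G}$-equivariant, that the two assignments are mutually inverse (i.e.\ $\mathrm{ev}\circ\widehat{S}=S$ and $\widehat{\mathrm{ev}\circ T}=T$), and, since $L$ and $L_{\mathcal{A}}$ act by isometries, that $\lVert \widehat{S}\rVert_\infty=\lVert S\rVert_\infty$ and $\lVert\mathrm{ev}\rVert_\infty\le 1$.

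With the adjunction in place the extension problem is immediate. Given an injective admissible $\mathcal{G}$-morphism $\alpha:\mathcal{F}\to\mathcal{K}$ with left inverse $\eta$ (so $\eta\circ\alpha=\id$ and $\lVert\eta\rVert_\infty\le1$) and a $\mathcal{G}$-morphism $\beta:\mathcal{F}\to\mathcal{L}^{(n)}$, I would set $\beta^\flat:=\mathrm{ev}\circ\beta$, then $\psi^\flat:=\beta^\flat\circ\eta:\mathcal{K}\to\mathcal{L}^{(n-1)}$, and finally $\psi:=\widehat{\psi^\flat}$. Naturality of $\mathrm{ev}$ (which gives $\widehat{S}\circ\alpha=\widehat{S\circ\alpha}$) yields $\psi\circ\alpha=\widehat{\beta^\flat\circ\eta\circ\alpha}=\widehat{\beta^\flat}=\beta$, while the isometric adjunction and $\lVert\eta\rVert_\infty\le1$ give $\lVert\psi\rVert_\infty=\lVert\psi^\flat\rVert_\infty\le\lVert\beta^\flat\rVert_\infty\le\lVert\beta\rVert_\infty$. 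This is exactly the datum required by Definition \ref{definition_relatively_injective}.

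The main obstacle I anticipate is the measurability bookkeeping rather than the algebra. One must verify that $\mathrm{ev}$ and $\widehat{S}$ satisfy condition (i) of Definition \ref{definition_morphism_bundles}, namely that they carry measurable sections to measurable sections, and are bounded. By Remark \ref{oss morphism dense subset} it suffices to test this on the generating families of sections furnished by the disintegration isomorphism (Theorem \ref{proposition_disintegration_isomorphism} and Example \ref{example_bundle_linf}) and the duality of Proposition \ref{prop duality isomorphism delaroche}, pairing against integrable sections of the relevant predual bundle. The two genuinely delicate checks are that restriction to the unit-atom slice is compatible with the weak-$*$ measurable structure, and that $g_1\mapsto L(g_1)\big(\cdots\big)(g_1^{-1}g_2,\ldots,g_1^{-1}g_n)$ is jointly measurable; both reduce to measurability of the groupoid multiplication and inversion and to the compatibility of $L$ with measurable sections imposed in Definition \ref{definition_action_on_bundle}.
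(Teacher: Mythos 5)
Your proposal is correct, and at bottom it builds the very same extension operator as the paper: if you unwind the $\mathcal{G}$-equivariance of $\beta$ in your formula $\psi=\widehat{\mathrm{ev}\circ\beta\circ\eta}$, you land exactly on the map $\psi_x(w)(h)=\beta_x\left(L_{\mathcal{F}}(h)\,\eta_{s(h)}(L_{\mathcal{K}}(h^{-1})w)\right)(h)$ that the paper writes down and verifies by direct computation. The difference is in the packaging, and it is not negligible. The paper treats only the degree-one case (declaring higher degrees analogous) and checks equivariance, the identity $\psi\circ\alpha=\beta$ and the norm bound by hand; you instead isolate the entire content in a coinduction-type adjunction between $\mathcal{G}$-morphisms into $\mathcal{L}(\mathcal{G}^{(n)},\mathcal{E})$ and arbitrary morphisms into $\mathcal{L}(\mathcal{G}^{(n-1)},\mathcal{E})$, implemented by evaluation at the unit, from which all three required properties fall out formally and uniformly in the degree. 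This buys a degree-independent argument and a cleaner identification of where $t$-discreteness enters (well-definedness of the atomic slice evaluation $\mathrm{ev}$, which is precisely the obstruction the paper's remark after its proof points to). What it costs is that the adjunction itself must be verified once: the measurability of $\widehat{S}$, which you correctly reduce to Definition \ref{definition_action_on_bundle} and Remark \ref{oss morphism dense subset}, and the essential-supremum estimate $\lVert\widehat{S}_x\rVert\leq\lVert S\rVert_\infty$, which silently uses that $s_*\nu\sim\mu$ so that the bound $\lVert S_{s(g)}\rVert\leq\lVert S\rVert_\infty$ holds for almost every $g$ and hence, by $t$-discreteness, for every $g$ in almost every fibre $\mathcal{G}^x$. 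These checks are at the same level of detail as the paper's own ``$\psi$ is a morphism because $\sigma$ and $\beta$ are,'' so I see no genuine gap.
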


\begin{proof}

We will only deal with the degree one case, since the proof for higher degrees is analogous and we leave it to the reader. 

By hypothesis, $\mathcal{G}^x$ is discrete for every $x\in X$. Hence the space $\Linfw((\calG,\nu^x),E_x)$ boils down to $\ell^{\infty}(\mathcal{G}^x,E_x)$, that is the Banach space of $E$-valued bounded functions on $\mathcal{G}^x$ endowed with the supremum norm. In particular, the equivalence relation on functions is trivial, since the latter is induced by the counting measure.

Consider now the extension problem 
\begin{center}
  \begin{tikzcd}
    \mathcal{F}\arrow{rr}{\alpha} \arrow{rd}[swap]{\beta}    &&\mathcal{K}\arrow[dotted]{ld}{?} \arrow[bend right=30]{ll}[swap]{\sigma} \\
    &\mathcal{L}(\mathcal{G},\mathcal{E}) &\,
  \end{tikzcd}
\end{center}
where
$\mathcal{F},\calK$ are measurable $\calG$-bundles of Banach spaces,
$$\alpha:\mathcal{F}\to\calK$$ is an admissible $\calG$-morphism and 
$$\beta:\calF\to\mathcal{L}(\mathcal{G},\mathcal{E}) $$ is a
$\calG$-morphism.
Moreover, 
$$\sigma:\calK\to\calF$$ is a left inverse
for $\alpha$ such that $||\sigma||_{\infty}\leq 1$. In other words,  
\begin{equation}\label{equation_inverse}
\sigma_x\circ \alpha_x=\id_{F_x}
\end{equation}
holds for almost every $x\in X$.
By definition we have 
\begin{equation}\label{equation_alpha}
\alpha_{t(g)} (L_{\mathcal{F}}(g) v)=L_{\mathcal{K}}(g)\alpha_{s(g)}( v)
\end{equation}
for almost every $g\in \calG$ and for every $v\in \calF$ and 
\begin{equation}\label{equation_beta}
 \beta_{t(g)} (L_{\mathcal{F}}(g) v)(h)=L_{\mathcal{E}}(g) \beta_{s(g)} ( v) (g^{-1}h)
\end{equation}
for almost every $g,h \in \calG^x$ and for every $v\in \calF$. 
We define a morphism
$$\psi:\calK\to\mathcal{L}(\mathcal{G},\mathcal{E}) $$ as
$$\psi_x(w)(h)\coloneqq \beta_x(L_{\mathcal{F}}(h) \sigma_{s(h)} (L_{\mathcal{K}}(h^{-1}) w ))(h )$$
for every $x\in X$, $h\in \calG^x$ and $w\in \calK_x$.

The fact that $\psi$ is a morphism of $\mathcal{G}$-bundles follows by the fact that both $\sigma$ and $\beta$ are so. 
Moreover,
for every $g,h\in \calG^x$ and for every $w\in \calK_{s(g)}$ we have 
\begin{align*}
\psi_{t(h)}(L_{\mathcal{K}}(g) w)(h)
=\;&  \beta_{t(h)}(L_{\mathcal{F}}(h) \sigma_{s(h)} (L_{\mathcal{K}}(h^{-1})L_{\mathcal{K}} (g) w ) )(h )\\
=\;&  L_{\mathcal{E}}(g) \beta_{s(g)}(L_{\mathcal{F}}(g^{-1}) L_{\mathcal{F}}(h) \sigma_{s(h)} (L_{\mathcal{K}}(h^{-1} g) w )) (g^{-1}h )\\
=\;&  L_{\mathcal{E}}(g) \beta_{s(g)}(L_{\mathcal{F}}(g^{-1}h) \sigma_{s(h)} (L_{\mathcal{K}}(h^{-1} g)(w)))(g^{-1}h )\\
=\;& (\overline{L}(g) (\psi_{s(g)}))(w)(h)\,,
\end{align*}
hence $\psi$ is a $\calG$-morphism.
In the above computation we moved from the second line to the third one thanks to Equation \eqref{equation_beta}, then we exploited the properties of the $\mathcal{G}$-actions, and we concluded by definition of $\psi$  and of the $\calG$-action on $\mathcal{L}(S,\mathcal{E})$ given by Equation \eqref{equation_action}.
 
Furthermore, for almost every $g\in \calG$ and for every $v\in F_{t(g)}$ we have
 \begin{align*}
\psi_{t(h)}\circ \alpha_{t(h)} (v)(h)
=\;&  \beta_{t(h)}(L_{\mathcal{F}}(h)\sigma_{s(h)} (L_{\mathcal{K}}(h^{-1}) \alpha_{t(h)} (v) ) )(h )\\
=\;&  \beta_{t(h)}(L_{\mathcal{F}}(h)\sigma_{s(h)}  \alpha_{s(h)}(L_{\mathcal{F}}(h^{-1}) v) )(h )\\
=\;&  \beta_{t(h)}(L_{\mathcal{F}}(h)L_{\mathcal{F}}(h^{-1}) v )(h )\\
=\;&  \beta_{t(h)}(v )(h )\,,
\end{align*}
hence $\psi_x\circ \alpha_x=\beta_x$ 
for almost every $x\in X$.
In the above computation we moved from the first line to the second one by definition of $\psi$, from the second line to the third one exploiting Equation \eqref{equation_alpha}, from the third line to the fourth one by Equation \eqref{equation_inverse} and we concluded thanks to the property of the $\mathcal{G}$-action.

Finally, since $\lVert \sigma \rVert_{\infty}\leq 1$, for almost every $x\in X$ and for every $g\in \calG^x$, $w\in K_{x}$, we have
$$
\lVert \beta_{x}(L_{\mathcal{F}}(h) \sigma_{s(h)} (L_{\mathcal{K}}(h^{-1}) w))(h) \rVert_{E_x} \leq  \lVert \beta_{x}\rVert_{\infty} \lVert w \rVert_{K_x}
$$
and thus 
$$\lVert \psi \rVert_{\infty} \leq \lVert \beta \rVert_{\infty} \,.$$
This concludes the proof.
\end{proof}

\begin{oss}
We stress the fact that the above argument cannot be extended to general measured groupoids. Indeed, the discreteness assumption allows to evaluate an essentially bounded section of $\mathcal{L}(\mathcal{G},\mathcal{E})$ on each fiber, which is crucial to define the solution of an extension problem . 

The strategy adopted by Monod \cite[Lemma 4.4.3]{monod:libro} to prove the same result for topological groups strongly relies on topological considerations. For this reason, we do not see how to adapt it in our context. 
\end{oss}

\begin{prop}\label{proposition_relative_injective}
  Let $\mathcal{G}$ be a $t$-discrete measured groupoid, $(S,\tau)$ an amenable $\mathcal{G}$-space and $\mathcal{E}$ be a $\mathcal{G}$-bundle that is the dual of a separable $\mathcal{G}$-bundle $\mathcal{F}$. Then the $\calG$-bundle $\mathcal{L}(S^{(\bullet+1)},\mathcal{E})$ is relatively injective for every $\bullet\geq 0$.
\end{prop}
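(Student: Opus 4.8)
The plan is to deduce the statement from the special case $S=\mathcal{G}$ established in Proposition \ref{proposition_relative_injective1}, using the amenability of $S$ to produce a norm-one splitting and then invoking Lemma \ref{lemma:rel:inj2}. Write $Z:=S^{(\bullet+1)}$, regarded as a $\mathcal{G}$-space via the diagonal action and fibred over $X$ by the target map $t_Z$. Applying Lemma \ref{lemma:rel:inj1} with $T=S^{(\bullet)}$, the space $Z=S\ast S^{(\bullet)}$ is again an amenable $\mathcal{G}$-space. Hence, by Definition \ref{definition_amenable_groupoid}, it carries an invariant measurable system of means $\{\mathfrak{m}^{\mathbf s}\}_{\mathbf s\in Z}$, each $\mathfrak{m}^{\mathbf s}\colon\Linf(\mathcal{G},\nu^{t_Z(\mathbf s)})\to\mathbb{R}$ a unital state of norm one, whose parameter dependence is Borel and which satisfies the invariance relation \eqref{equation_invariance_mean}.

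Set $\mathcal{D}:=\mathcal{L}(Z,\mathcal{E})$, the dual of the separable bundle $\mathcal{L}^{\flat}(Z,\mathcal{F})$, and consider the bundle $\mathcal{L}(\mathcal{G},\mathcal{D})$, whose fiber over $x$ is $\ell^{\infty}(\mathcal{G}^x, D_x)$ and which I would identify with the bundle $x\mapsto\Linfw(\mathcal{G}^x\times Z_x, E_x)$ of essentially bounded $E_x$-valued functions on $\mathcal{G}^x\times Z_x$, equipped with the diagonal $\mathcal{G}$-action $(g\cdot F)(g_0,\mathbf s)=L(g)F(g^{-1}g_0,g^{-1}\mathbf s)$. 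Since the proof of Proposition \ref{proposition_relative_injective1} uses only the $t$-discreteness of $\mathcal{G}$ and is insensitive to the nature of the coefficients, it applies verbatim to the dual coefficient bundle $\mathcal{D}$ and shows that $\mathcal{F}':=\mathcal{L}(\mathcal{G},\mathcal{D})$ is relatively injective.

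It then remains to exhibit the two $\mathcal{G}$-morphisms required by Lemma \ref{lemma:rel:inj2} between $\mathcal{E}':=\mathcal{L}(Z,\mathcal{E})$ and $\mathcal{F}'$. I would take the inclusion of functions constant in the $\mathcal{G}$-variable,
\[
\varphi\colon \mathcal{L}(Z,\mathcal{E})\to \mathcal{F}',\qquad \varphi_x(\lambda)(g_0,\mathbf s):=\lambda(\mathbf s),
\]
which is plainly isometric and is $\mathcal{G}$-equivariant by a direct comparison of \eqref{equation_action} with the diagonal action above. For the left inverse I would average out the $\mathcal{G}$-variable against the mean, defining $\sigma\colon \mathcal{F}'\to\mathcal{L}(Z,\mathcal{E})$ through the duality with the predual fiber $F_x$ by
\[
\langle \sigma_x(F)(\mathbf s),\theta\rangle:=\mathfrak{m}^{\mathbf s}\big(g_0\mapsto \langle F(g_0,\mathbf s),\theta\rangle\big),\qquad \theta\in F_x.
\]
Because each $\mathfrak{m}^{\mathbf s}$ is a unital state of norm one, $\sigma$ has norm at most one and $\sigma\circ\varphi=\id$; the invariance \eqref{equation_invariance_mean} of the means translates, after the change of variable dictated by \eqref{equation_action}, into the $\mathcal{G}$-equivariance of $\sigma$. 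Feeding $\varphi$ and $\sigma$ into Lemma \ref{lemma:rel:inj2} yields the relative injectivity of $\mathcal{L}(S^{(\bullet+1)},\mathcal{E})$, as desired.

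The main obstacle I anticipate is checking that $\sigma$ is a bona fide morphism of measurable $\mathcal{G}$-bundles rather than merely a fiberwise construction. Well-definedness of $\sigma_x(F)(\mathbf s)$ as an element of $E_x=F_x^{\ast}$ requires that $g_0\mapsto\langle F(g_0,\mathbf s),\theta\rangle$ genuinely lie in $\Linf(\mathcal{G},\nu^x)$ and that the resulting functional depend measurably, in the $w^{\ast}$ sense, on $\mathbf s$; this is precisely where the dual structure of $\mathcal{E}$ and the Borel dependence of the system of means enter, and where one reduces the verification to a generating family of sections as in Remark \ref{oss morphism dense subset}. The equivariance computation is routine but delicate: one must apply each mean in the correct fiber, $\nu^{s(g)}$ versus $\nu^{t(g)}$, after the substitution $g_0\mapsto g^{-1}g_0$, so that \eqref{equation_invariance_mean} can be invoked in the form $g\mathfrak{m}^{g^{-1}\mathbf s}=\mathfrak{m}^{\mathbf s}$.
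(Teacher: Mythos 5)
Your proposal is correct and follows essentially the same strategy as the paper: relative injectivity of $\mathcal{L}(\mathcal{G},-)$ from Proposition \ref{proposition_relative_injective1}, a norm-one inclusion into $\mathcal{L}(S^{(\bullet+1)}\ast\mathcal{G},\mathcal{E})\cong\mathcal{L}(\mathcal{G},\mathcal{L}(S^{(\bullet+1)},\mathcal{E}))$, a mean-averaged retraction built from the invariant system of means supplied by amenability (with Lemma \ref{lemma:rel:inj1} guaranteeing amenability of the fibred powers), and Lemma \ref{lemma:rel:inj2} to conclude. The only cosmetic difference is that the paper first reduces to $\mathcal{L}(S,\mathcal{E})$ via the exponential-law isomorphism $\mathcal{L}(S^{(\bullet+1)},\mathcal{E})\cong\mathcal{L}(S,\mathcal{L}(S^{(\bullet)},\mathcal{E}))$ and then averages over $S$, whereas you average directly over $S^{(\bullet+1)}$; the delicate points you flag (measurability of $\sigma$ checked on a generating family, and applying the mean in the correct fiber for equivariance) are exactly the ones the paper addresses.
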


\begin{proof}
Thanks to Lemma \ref{lemma:rel:inj1}, it is sufficient to prove the proposition for $\mathcal{L}(S, \mathcal{E})$. Indeed, by \cite[Corollary 2.3.3]{monod:libro} for each $x\in X$ we have canonical isometric isomorphisms 
$$\Linfw((S^x)^{\bullet+2}, E_x))\cong \Linfw(S^x, (\Linfw((S^x)^{\bullet+1},E_x)))$$
that induce a bundle isomorphism
$$\mathcal{L}(S^{(\bullet+1)}, \mathcal{E})\cong \mathcal{L}(S, \mathcal{L}(S^{(\bullet)},\mathcal{E}))\,,$$
in any degree.

  Moreover, by \cite[Corollary 2.3.3]{monod:libro} we also have a canonical isometric isomorphism
   $$\Linfw(S^x \times \mathcal{G}^x, E_x)\cong \Linfw(\mathcal{G}^x,\Linfw(S^x,E_x))\,$$
   for every $x\in X$.
In this way we obtain an isomorphism of $\mathcal{G}$-bundles
$$\mathcal{L}(S \ast \mathcal{G}, \mathcal{E})\cong \mathcal{L}(\mathcal{G}, \mathcal{L}(S,\mathcal{E}))\,.$$
Thanks to Proposition \ref{proposition_relative_injective1} the right hand side is relatively injective, thus the same holds for the left hand side.

Consider now the morphism
$$\iota:\mathcal{L}(S,\mathcal{E})\to \mathcal{L}(S \ast \mathcal{G}, \mathcal{E})$$ induced by inclusions $\Linfw(S^x,E_x)\hookrightarrow \Linfw(S^x \times G^x, E_x)$. By definition we see  
$\lVert \iota \rVert_{\infty}\leq 1$. 
We claim that $\iota$ admits a left inverse of norm at most one. 
If this is the case, we are in the condition to apply Lemma \ref{lemma:rel:inj2}. Since $\mathcal{L}(S \ast \mathcal{G}, \mathcal{E})$ is relatively injective, we can conclude. 

The rest of the proof is devoted to prove the claim, namely to construct a morphism 
$\sigma: \mathcal{L}(S \ast \mathcal{G}, \mathcal{E})\to \mathcal{L}(S,\mathcal{E})$ such that $\sigma\circ \iota=\id$ and $\lVert \sigma \rVert_{\infty}\leq 1$.
Since $S$ is $\mathcal{G}$-amenable, there exists an equivariant family 
$$\mathfrak{m}^s: \Linf((\mathcal{G},\nu^x),E_x)\to \mathbb{R}$$
of means. We define $\sigma$ fiberwisely as 
$$
\sigma_x: \mathrm{L}^\infty_{\textup{w}^\ast}(S^x \times \mathcal{G}^x,E_x) \rightarrow \mathrm{L}^\infty_{\textup{w}^\ast}(S^x,E_x),
$$
$$\langle\sigma_x(\varphi)(s),v\rangle\coloneqq \mathfrak{m}^s(g\mapsto \langle \varphi(g,s),v  \rangle )$$
where $v\in E_x^{\flat}$. 
Here $\langle\cdot ,\cdot\rangle$ refers to the pairing between $E_x$ and $E_x^{\flat}$. 

The fact that $\sigma$ is well-defined relies on the fact that the predual $\mathcal{F}:=\mathcal{E}^\flat$ is separable. Now we show that $\sigma$ is a morphism. To this end we fix a measurable section $\eta$ of $\mathcal{L}(S \ast \mathcal{G},\mathcal{E})$ and we prove that 
$x\mapsto \sigma_x\circ \eta(x)$ is a section of $\mathcal{L}(S,\mathcal{E})$. 
Precisely, since $\mathcal{L}(S,\mathcal{E})$ is the dual of $\mathcal{L}^{\flat}(S,\mathcal{F})$, we need to show that 
the map $$x\mapsto \langle \sigma_x\circ \eta(x), \xi (x)\rangle$$ is $\mu$-measurable for every measurable section $\xi$ of $\mathcal{L}^{\flat}(S,\mathcal{F})$. Here $\langle\cdot,\cdot\rangle$ refers to the pairing between $\Lone(S, E_x^{\flat})$ and $\Linfw(S, E_x)$. 
% Define the direct system of measurable subsets $B_N\subset X$ as 
% $$B_N\coloneqq \{x\in X\,|\, ||\eta_x||_{E_x}< N\}\,,$$
% and the sequence of sections
% $$\eta_N\coloneqq \chi_{B_N} \eta \in \Linf(X, \mathcal{L}(S,\mathcal{E}))\,.$$
% Since $\eta_N$ is a sequence of essentially bounded measurable sections converging to $\eta$, by Remark \ref{oss morphism dense subset}
By Remark \ref{oss morphism dense subset} we can reduce to the case when $\eta\in \Linf(X, \mathcal{L}(\mathcal{G}*S,\mathcal{E}))$ and $\xi\in \Lone(X,\mathcal{L}^{\flat}(S, \mathcal{F}))$. 
Thus we have 
\begin{align*}
  \langle \sigma_x\circ \eta(x), \xi (x)\rangle&= \int_S \langle\sigma_x\circ \eta(x)(s), \xi (x)(s)\rangle d\tau^x(s)\\
  &=  \int_S \mathfrak{m}^s(g\mapsto \langle \eta(g,s), \xi(s)  \rangle ) d\tau^x(s)
\end{align*}
In the above computation we exploited twice the disintegration isomorphisms of Section \ref{section disintegration} to view both $\sigma$ and $\eta$ as measurable sections on $S \ast \mathcal{G}$ and $S$, respectively. The measurability now follows since $\mathfrak{m}^s$ is a system of means, hence $s\mapsto \mathfrak{m}^s(g\mapsto \langle \eta(g,s),\xi(s) \rangle )$ is measurable, and because $\tau^x$ is a Borel system. 

We move on our proof and we focus on the $\mathcal{G}$-equivariance. 
On one hand we have that
\begin{align}\label{LHS mean}
 g((\sigma_{s(g)}(\varphi_{s(g)})(s))(v)&=\mathfrak{m}^{g^{-1}s}(h \mapsto \langle L_{\mathcal{E}}(g) \varphi_{s(g)}(h,g^{-1}s) , v \rangle)\\
&=g^{-1}\mathfrak{m}^s(h \mapsto \langle  L_{\mathcal{E}}(g)  \varphi_{s(g)}(h,g^{-1}s),v\rangle) \nonumber \\
&=\mathfrak{m}^s(h \mapsto \langle L_{\mathcal{E}}(g) \varphi_{s(g)}(g^{-1}h,g^{-1}s),v \rangle, \nonumber 
\end{align}
where we moved from the first line to the second one using the invariance of the system of means and we concluded by applying the usual action on essentially bounded sections. 

On the other hand we compute
\begin{align}\label{RHS mean}
\sigma_{t(g)}(g\varphi_{s(g)}(s))(v)&=\mathfrak{m}^s(h \mapsto \langle (g\varphi_{s(g)})(h,s),v \rangle)\\
&=\mathfrak{m}^s(h \mapsto \langle L_{\mathcal{E}}(g) \varphi_{s(g)}(g^{-1}h,g^{-1}s),v \rangle). \nonumber
\end{align}
Since Equation \eqref{LHS mean} and Equation \eqref{RHS mean} produce the same result, the claim is true and this concludes the proof. 
\end{proof}

Putting together Proposition \ref{proposition_strong} and Proposition \ref{proposition_relative_injective} we finally obtain the following
\begin{rec_thm}[\ref{thm:amenable}]
Let $\calG$ be a $t$-discrete measured groupoid, $(S,\tau)$ an amenable $\mathcal{G}$-space and $\mathcal{E}$ a measurable $\calG$-bundle that is the dual of a separable measurable $\calG$-bundle.
Then we have a natural isomorphism
$$\Hm^{k}(\Linf(X, \mathcal{L}(S^{(\bullet+1)},\mathcal{E}))^{\mathcal{G}})\cong \Hmb^{k}(\calG,\mathcal{E})\,$$
for every $k\geq 0$.
\end{rec_thm}
\begin{proof}
  Since the resolution 
  \begin{center}
    \begin{tikzcd}
      0 \arrow{r}& \mathcal{E}\arrow{r}{\epsilon}&\mathcal{L}(S,\mathcal{E})\arrow{r}{\delta}&\mathcal{L}(S^{(2)},\mathcal{E})\arrow{r}&\cdots 
    \end{tikzcd}
  \end{center}
  is strong (Proposition \ref{proposition_strong}) by relatively injective bundles (Proposition \ref{proposition_relative_injective}), we can apply Corollary \ref{corollary_cohomology} and get the desired isomorphisms. 
\end{proof}

A direct application of Theorem \ref{thm:amenable} is that bounded cohomology can be computed using strong boundaries. In the following result we use the definition of $\mathcal{G}$-boundary introduced by the authors \cite{sarti:savini:24}.

\begin{cor}\label{corollary:boundaries}
  Let $\calG$ be a $t$-discrete measured groupoid, $(B,\tau)$ a $\mathcal{G}$-boundary and $\mathcal{E}$ a measurable $\calG$-bundle that is the dual of a separable measurable $\calG$-bundle.
  Then we have a natural isomorphism
  $$\Hm^{k}(\Linf(X, \mathcal{L}(B^{\bullet},\mathcal{E}))^{\mathcal{G}})\cong \Hmb^{k}(\calG,\mathcal{E})\,$$
  for every $k\geq 0$.
  \end{cor}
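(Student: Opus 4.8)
The plan is to derive this corollary as an immediate consequence of Theorem \ref{thm:amenable}. The key observation I would make is that, in the proof of that theorem, the space $S$ entered only through two features: its structure as a $\mathcal{G}$-space (used in Proposition \ref{proposition_strong} to build the contracting homotopy by fibrewise integration against the system $\{\tau^x\}$) and its $\mathcal{G}$-amenability (used in Proposition \ref{proposition_relative_injective} to produce, via the equivariant system of means, the relative injectivity of the bundles $\mathcal{L}(S^{(\bullet+1)},\mathcal{E})$). Consequently, to invoke the theorem with $S=B$, the only thing I need to check is that a $\mathcal{G}$-boundary is an amenable $\mathcal{G}$-space.

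First I would recall that amenability is part of the very definition of a $\mathcal{G}$-boundary given in \cite{sarti:savini:24}: a $\mathcal{G}$-boundary $(B,\tau)$ is in particular a $\mathcal{G}$-amenable space, the additional ergodicity/mixing axioms in that definition playing no role for the present statement. With this identification, the hypotheses of Theorem \ref{thm:amenable} are satisfied verbatim with $S=B$, and the complex $\mathcal{L}(B^{\bullet},\mathcal{E})$ appearing here is exactly the complex $\mathcal{L}(S^{(\bullet+1)},\mathcal{E})$ of that theorem, so no further bookkeeping with indices is required.

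Applying Theorem \ref{thm:amenable} then yields the natural isomorphism
$$\Hm^{k}(\Linf(X, \mathcal{L}(B^{\bullet},\mathcal{E}))^{\mathcal{G}})\cong \Hmb^{k}(\calG,\mathcal{E})$$
for every $k\geq 0$, as claimed. I do not anticipate any genuine obstacle in this argument: the entire mathematical content of the corollary is the abstract input that $\mathcal{G}$-boundaries furnish amenable $\mathcal{G}$-spaces. This is precisely the feature that allows strong boundaries to compute bounded cohomology, in complete analogy with the classical results of Ivanov for discrete groups and of Monod in the continuous setting.
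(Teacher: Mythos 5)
Your proposal is correct and matches the paper exactly: the paper offers no separate proof of this corollary, presenting it as a direct application of Theorem \ref{thm:amenable} once one knows that a $\mathcal{G}$-boundary in the sense of \cite{sarti:savini:24} is in particular an amenable $\mathcal{G}$-space. The only ingredient you supply beyond the paper's one-line justification is the (correct) remark that the remaining boundary axioms play no role here.
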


Another relevant consequence is the following vanishing result, that was already proved by the authors (\cite[Theorem 4]{sarti:savini:23}) with different techniques. 

\begin{rec_cor}[\ref{cor vanishing amenable}]
Let $\mathcal{G}$ a $t$-discrete amenable measured groupoid and $\mathcal{E}$ a measurable $\calG$-bundle that is the dual of a separable measurable $\calG$-bundle. Then we have that
$$
\Hmb^{k}(\calG,\mathcal{E}) \cong 0,
$$
for $k \geq 1$. 
\end{rec_cor}

\begin{proof}
If the groupoid is amenable, the unit space $X$ is an amenable $G$-space. Additionally, the fibred product $X^{(\bullet)}$ is done with respect to the identity, hence we have that
$$
\mathrm{L}^\infty(X,\mathcal{L}(X^{(\bullet+1)},\mathcal{E}))^{\mathcal{G}} \cong \mathrm{L}^\infty(X,\mathcal{E})^{\mathcal{G}},
$$
in any degree. 
By Theorem \ref{thm:amenable} the complex
\begin{center}
    \begin{tikzcd}
      0 \arrow{r}& \mathrm{L}^\infty(X,\mathcal{E})^{\mathcal{G}} \arrow{r}{0} & \mathrm{L}^\infty(X,\mathcal{E})^{\mathcal{G}} \arrow{r}{\mathrm{Id}} & \mathrm{L}^\infty(X,\mathcal{E})^{\mathcal{G}} \arrow{r}{0} & \cdots
    \end{tikzcd}
\end{center}
computes the bounded cohomology of $\calG$ with $\mathcal{E}$-coefficients. Since it is acyclic, we obtain
$$
\Hmb^{k}(\calG,\mathcal{E}) \cong 0
$$
when $k \geq 1$. 
\end{proof}

  \bibliographystyle{alpha}

\bibliography{biblionote}

\end{document}